\documentclass[a4paper,11pt]{article}

\usepackage[T1]{fontenc}
\usepackage{lmodern}
\usepackage[
    top=1.8cm,
    bottom=2.2cm,
    left=2.4cm,
    right=2.4cm
]{geometry}

\usepackage{amsmath,amssymb,amsthm}
\usepackage{cite}
\usepackage{enumitem}
\usepackage{array}
\usepackage{tabularx}
\usepackage{xcolor}

\usepackage{hyperref}

\hypersetup{
    colorlinks=true,
    linkcolor=blue,
    citecolor=blue,
    urlcolor=blue,
    pdftitle={
        Fujita-Type Blow-Up Phenomena for Semilinear Heat Equations
        with the Logarithmic Laplacian
    },
    pdfauthor={
        Huyuan Chen, Rui Chen, Daniel Hauer, Jun Wang
    }
}

\numberwithin{equation}{section}

\theoremstyle{plain}
\newtheorem{theorem}{Theorem}[section]
\newtheorem{lemma}{Lemma}[section]
\newtheorem{proposition}{Proposition}[section]
\newtheorem{corollary}{Corollary}[section]

\theoremstyle{definition}
\newtheorem{definition}{Definition}[section]

\newtheorem{remark}{Remark}[section]

\newcommand{\R}{\mathbb{R}}

\renewcommand{\phi}{\varphi}

\newcolumntype{Y}{>{\centering\arraybackslash}X}

\allowdisplaybreaks

\begin{document}

\begin{center}
\vspace*{-0.7cm}

{\large\bfseries
Sharp Lifespan Dichotomies and Threshold Phenomena for
Semilinear Heat Equations Driven by the Logarithmic Laplacian
\par}

\vspace{0.7cm}

{\normalsize
Huyuan Chen
\qquad
Rui Chen
\qquad
Daniel Hauer
\qquad
Jun Wang
\par}
\end{center}

\vspace{0.1cm}

\begin{abstract}
We investigate nonnegative mild solutions to the following semilinear heat equation
$$\partial_tu+(-\Delta)^{\ln}u=f(u)
\qquad
\text{in }(0,T)\times\mathbb R^N,$$
with initial datum \(u(0,\cdot)=\mu u_0\), where \(\mu>0\). In contrast
to the classical and fractional heat semigroups, whose positive kernels are defined for all positive times and whose linear solutions decay as $t\rightarrow\infty$, the positive logarithmic heat
kernel exists only for \(0<t<N/2\). Moreover, the corresponding linear
evolution may become singular at its terminal time. Both its maximal
lifespan and its terminal growth rate depend  on the spatial
decay of \(u_0\).

The behavior of $f$ near zero determines local solvability: if $\int_{0^+}\frac{d\sigma}{f(\sigma)}<\infty,$ then no finite nonnegative solution exists on any time interval of positive length. Subsequently, under assumptions $(\mathcal U_\alpha)$ and $(\mathcal F)$, we establish a basic well-posedness framework adapted to the nonintegrable logarithmic heat kernel.  We also obtain two complementary lifespan criteria: if $f$ exhibits at most global linear growth, the nonlinear solution attains the full linear lifespan, whereas the Osgood condition at infinity, $\int_{s_*}^{\infty}\frac{d\sigma}{f(\sigma)}<\infty,$ implies that the maximal existence time tends to zero as $\mu\to\infty$.

We then distinguish between the slow-decay, fast-decay, and critical-tail regimes of the initial datum. In the noncritical regimes, the weighted Osgood tail condition at infinity forces blow-up strictly before the linear terminal time. If this Osgood condition fails, we establish a threshold phenomenon with respect to $\mu$ under additional assumptions \((\mathcal F_\infty)\)  on $f$. For critical-tail initial data, the dividing power becomes $3/2$: the square-root weighted Osgood tail condition at infinity yields premature blow-up, whereas the failure of this condition similarly gives rise to an amplitude threshold. Finally, we derive general terminal-time blow-up estimates and obtain sharp blow-up rates for power nonlinearities.
\end{abstract}

\medskip

\noindent\textbf{Keywords:}
Logarithmic Laplacian; semilinear heat equation; Fujita-type blow-up;
lifespan; Osgood conditions; threshold phenomena.

\medskip

\noindent\textbf{2020 Mathematics Subject Classification:}
Primary 35K58; Secondary 35A01, 35B44, 35R11.

\bigskip

\tableofcontents

\bigskip

\section{Introduction and Main Results}

In this paper, for each \(\mu>0\), we consider the Cauchy problem
\begin{equation}\label{eq:main-cauchy-problem}
\begin{cases}
\partial_tu(t,x)+(-\Delta)^{\ln}u(t,x)=f(u(t,x))\quad 
    &{\rm for}\ \ (t,x)\in (0,T)\times \mathbb R^N,\\[2mm]
u(0,x)=\mu u_0(x)
    &{\rm for}\ \  x\in\mathbb R^N,
\end{cases}
\end{equation}
where $T>0$,  the initial profile \(u_0\) and the nonlinearity \(f\) satisfy
\begin{equation}\label{ass:u0}
u_0:\mathbb R^N\to[0,\infty)
\ \text{is measurable},
\qquad
u_0\not\equiv0 \quad\text{a.e.}
\end{equation}
and
\begin{equation}\label{ass:F-basic}
f\in C([0,\infty)),\qquad
f\ge0,\qquad
f\ \text{is nondecreasing}.
\end{equation}

Our first objective is to characterize the linear lifespan determined by the
initial datum \(u_0\). Its value is governed by the decay rate of
\(u_0\) at spatial infinity, whereas the finer asymptotic behavior of \(u_0\)
determines the profile of the linear flow as the terminal time is approached.
Taking this linear lifespan as the natural reference time, we then investigate
how the nonlinearity \(f\) modifies the existence time of  mild
solutions to \eqref{eq:main-cauchy-problem}. More precisely, we identify
conditions on the behavior of \(f\) near zero and at infinity under which the
nonlinear solution either exists up to the linear terminal time or blows up
strictly before it. The central problem is therefore to understand the
competition between the spatial tail of \(u_0\), which determines the maximal
lifespan permitted by the linear evolution, and the nonlinear reaction, which
may cause premature blow-up.

\smallskip

In fact, the problem considered here may be viewed as the logarithmic
Laplacian counterpart of the Fujita problem for the semilinear fractional
heat equation
\[
\partial_t u+(-\Delta)^s u=u^p
\qquad
\text{in }(0,+\infty)\times\mathbb R^N,
\qquad
s\in(0,1].
\]
The
associated heat semigroup satisfies $\bigl\|e^{-t(-\Delta)^s}\bigr\|_
{L^1(\mathbb R^N)\to L^\infty(\mathbb R^N)}
\asymp
t^{-N/(2s)},$
and the competition between this linear decay and the growth of the
nonlinear source gives rise to the critical Fujita exponent $p_{\mathrm F}(s)
=
1+\frac{2s}{N}.$
More precisely, every nontrivial nonnegative solution blows up in finite
time when $1<p\le p_{\mathrm F}(s),$
whereas, if \(p>p_{\mathrm F}(s)\), global solutions exist for suitably
small nonnegative initial data. For \(s=1\), this is the classical Fujita
phenomenon established in
\cite{Fujita,Hayakawa,Sugitani,Kobayashi-Sirao-Tanaka,Fujita1970,LenzSchmidtZimmermann2023,LaisterSierzega2020,Guedda-Kirane,Kaplan}; for
\(s\in(0,1)\), see \cite{Sugitani,Laister-Sierzega,BirknerLopezMimbelaWakolbinger2005,HayashiKaikinaNaumkin2005} and the references
therein.

The same exponent can also be identified through scaling. Indeed, the
transformation $u_\lambda(t,x)
=
\lambda^{\frac{2s}{p-1}}
u(\lambda^{2s}t,\lambda x)$
leaves the equation invariant, while $\|u_\lambda(0,\cdot)\|_{L^1(\mathbb R^N)}
=
\lambda^{\frac{2s}{p-1}-N}
\|u_0\|_{L^1(\mathbb R^N)}.$
Thus, the \(L^1\)-mass is invariant when $\frac{2s}{p-1}=N,$
which again yields \(p=p_{\mathrm F}(s)\). For more general
nonlinearities, the blow-up behavior can similarly be related to that
of the associated ordinary differential equation through semigroup
lower bounds and Osgood-type integral conditions; see
\cite{Fujita1970,Laister-Robinson-Sierzega-Vidal,Laister-Sierzega}. 

The logarithmic Laplacian, introduced and systematically studied in
\cite{ChenWeth2019}, exhibits a fundamentally different mechanism from
the classical and fractional Laplacians; see also
\cite{JarohsSaldanaWeth2020,LaptevWeth2021,CV24,ChenVeron2023} for further
analytic and spectral developments. In particular, it has no fixed
homogeneity, since, for every \(\lambda>0\),
\[
(-\Delta)^{\ln}\bigl(u(\lambda\,\cdot)\bigr)(x)
=
\bigl((-\Delta)^{\ln}u\bigr)(\lambda x)
+
2\ln\lambda\,u(\lambda x),
\]
and hence the corresponding evolution equation admits no standard
parabolic scaling.

As shown in \cite{CV24}, the logarithmic heat kernel
is defined only for \(0<t<N/2\) and is given by
\[
\mathcal P_{\ln}(t,x)
=
\mathcal P_0(t)|x|^{2t-N},
\qquad
\mathcal P_0(t)
:=
\pi^{-N/2}4^{-t}
\frac{\Gamma\left(\frac{N-2t}{2}\right)}{\Gamma(t)}.
\]
Since $\mathcal P_{\ln}(t,\cdot)
\notin
L^1(\mathbb R^N)\cup L^\infty(\mathbb R^N),$
the associated local logarithmic heat flow
\begin{equation}\label{eq:linear solution}
S_{\ln}(t)u(x)
:=
\int_{\mathbb R^N}
\mathcal P_{\ln}(t,x-y)u(y)\,dy,
\qquad
0<t<\frac N2,
\end{equation}
has neither the usual \(L^1\)-to-\(L^\infty\) smoothing property nor
the mass-preserving Markov property enjoyed by the classical and
fractional heat semigroups.

A further essential difference is that, whereas the classical and
fractional heat flows decay as \(t\to\infty\), the logarithmic linear
flow may itself blow up at a finite terminal time determined by the
spatial decay of \(u_0\). Thus, in the present setting, one must first
identify the lifespan and terminal behavior of the linear evolution and
then determine whether the nonlinearity \(f\) forces blow-up strictly
before that time. Consequently, the critical behavior cannot be
described by a universal Fujita exponent depending only on the
dimension and the order of the operator; rather, it is governed jointly
by the spatial tail of \(u_0\) and the growth of \(f\).

\smallskip
As in the classical Fujita problem, we analyze
\eqref{eq:main-cauchy-problem} through the variation-of-constants formula
\begin{equation}\label{eq:mild-solution}
u(t,x)
=
\mu S_{\ln}(t)u_0(x)
+
\int_0^t
S_{\ln}(t-s)
\bigl(f(u(s,\cdot))\bigr)(x)\,ds,
\end{equation}
where the integral on the right-hand side is required to be finite for
every \((t,x)\in(0,T)\times\mathbb R^N\).

We first establish an instantaneous nonexistence result that is
independent of the behavior of the initial datum.
Every nontrivial nonnegative initial datum generates a positive
algebraic tail through the logarithmic heat kernel. A local comparison
with the scalar flow \(v'=\kappa f(v)\), together with the Osgood
condition at the origin, then raises the arbitrarily small far-field
values to a uniform positive level in finite time. Since the
logarithmic heat kernel is not integrable at spatial infinity, this
uniform positive tail forces the Duhamel term to diverge.

\begin{theorem}
\label{thm:instantaneous-nonexistence-osgood}
Assume that \eqref{ass:u0} and \eqref{ass:F-basic} hold. Suppose further
that \(f\) satisfies the Osgood condition at zero,
\begin{equation}\label{con:osgood-zero}
\int_{0^+}\frac{d\sigma}{f(\sigma)}<\infty.
\end{equation}
Then, for every \(\mu>0,\,T\in(0,N/2]\),
there exists no finite nonnegative solution of \eqref{eq:mild-solution} on $(0,T)$.
\end{theorem}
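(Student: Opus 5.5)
Argue by contradiction: assume a finite nonnegative solution $u$ of \eqref{eq:mild-solution} exists on $(0,T)$ for some $\mu>0$ and $T\in(0,N/2]$. The plan follows the mechanism described before the statement: show that $u$ has an algebraic tail, raise that tail to a uniform positive level via the Osgood condition at zero, and then show that the Duhamel term diverges because $\mathcal P_{\ln}(t,\cdot)$ is not integrable.

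\textbf{Step 1 (algebraic tail).} Fix a ball $B_R$ with $\int_{B_R}u_0>0$. For $|x|\ge 2R$ and $y\in B_R$ we have $|x-y|\asymp|x|$. Since $2t-N<0$, the kernel bound gives, for $t\in[t_0,t_1]\subset(0,T)$,
\[
\mu S_{\ln}(t)u_0(x)\ \ge\ c_0\,(1+|x|)^{2t-N}\ \ge\ c_0(1+|x|)^{2t_0-N},
\]
where $c_0>0$ comes from $\mathcal P_0$ being bounded below on $[t_0,t_1]$. Hence $u(t,x)\ge \varepsilon(x):=c_0(1+|x|)^{2t_0-N}>0$ on $[t_0,t_1]\times\R^N$. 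This lower bound is positive everywhere but arbitrarily small at infinity.

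\textbf{Step 2 (local comparison with the ODE).} We need a local lower bound for the Duhamel operator. For $\tau\in(0,t_1-t_0)$ and $|x-z|\le 1$, $\mathcal P_{\ln}(\tau,x-z)\ge \mathcal P_0(\tau)$, and $\mathcal P_0(\tau)\sim c\,\tau$ as $\tau\to0$ since $1/\Gamma(\tau)\sim\tau$. Integrating over the unit ball then suggests $S_{\ln}(\tau)g(x)\ge \kappa\,\tau\inf_{B_1(x)}g$, which is only $O(\tau)$. This bound is too weak to give $\partial_t v\ge \kappa f(v)$.

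To fix this, I would exploit that $\int_{|z|\ge 1}\mathcal P_{\ln}(\tau,z)\,dz=\infty$. Integrating over all of space, for $x$ in a large annulus,
\[
S_{\ln}(\tau)g(x)\ \ge\ \mathcal P_0(\tau)\int |x-z|^{2\tau-N}\,\inf g\,dz .
\]
A cleaner route is a monotone iteration: define $w(t)=\inf_{|x|\le \rho}u(t,x)$ on growing regions and use Step 1 together with the nondecreasing $f$ to obtain the integral inequality
\[
u(t,x)\ \ge\ \varepsilon(x)+\kappa\int_{t_0}^{t} f\bigl(u(s,\cdot)\bigr)\,ds
\]
on a neighbourhood, with $\kappa>0$ depending on the time window; this is the ``$v'=\kappa f(v)$'' comparison. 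The subsolution $v$ solving $v'=\kappa f(v)$, $v(t_0)=\varepsilon(x)$, reaches any fixed level $\delta$ by the time $t_0+\kappa^{-1}\int_{\varepsilon(x)}^{\delta}d\sigma/f(\sigma)$. By \eqref{con:osgood-zero} this time is bounded uniformly in $x$, and it can be made $<t_1-t_0$ by choosing $\delta$ small. Therefore $u(t_2,x)\ge\delta$ for all $x\in\R^N$ at some $t_2<T$.

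\textbf{Step 3 (divergence).} Take $t>t_2$ with $t<T$. Then
\[
u(t,x)\ \ge\ \int_{t_2}^{t}S_{\ln}(t-s)f(u(s))\,ds\ \ge\ f(\delta)\int_{t_2}^t\mathcal P_0(t-s)\int_{\R^N}|x-y|^{2(t-s)-N}\,dy\,ds=\infty,
\]
provided $f(\delta)>0$. This holds because \eqref{con:osgood-zero} forces $f>0$ on $(0,\infty)$; also $f(0)=0$ is allowed, while $f(0)>0$ already gives divergence directly. The persistence $u(s)\ge\delta$ for $s\ge t_2$ must be re-derived in the same way on a slightly later window. This contradicts finiteness.

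The main obstacle is Step 2. Because the kernel is non-Markovian and only $O(\tau)$ in mass near the diagonal, making the comparison constant $\kappa$ uniform in $x$ is delicate. I would handle this by working at fixed positive time lags rather than infinitesimal ones, so that $\mathcal P_0$ is bounded below, and by running a discrete iteration $u\ge \varepsilon+\kappa h f(\cdot)$ over steps of size $h$, which mimics the Osgood ODE.
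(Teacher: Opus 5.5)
Your overall architecture is the paper's: a positive algebraic tail from the linear term, a climb to a fixed level $\delta$ that is uniform in space via the Osgood integral at zero, and divergence of the Duhamel term from the nonintegrable kernel tail. Steps 1 and 3 are essentially what the paper does. Persistence of $u\ge\delta$ in Step 3 needs no re-derivation: the ODE subsolution is nondecreasing on the whole comparison window, so you simply evaluate the Duhamel integral at a time after that window.

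The genuine gap is Step 2. You flag it as the main obstacle and never close it, and the obstacle itself comes from a miscalculation. Replacing $|x-z|^{2\tau-N}$ by $1$ on $B_1(x)$ discards precisely the kernel mass. In polar coordinates,
\[
\int_{B_1(x)}\mathcal P_{\ln}(\tau,x-z)\,dz=\mathcal P_0(\tau)\,\frac{|\mathbb S^{N-1}|}{2\tau}\longrightarrow 1\qquad(\tau\to0^+)
\]
by \eqref{eq:zero}. So the local mass is bounded below uniformly for small lags, not $O(\tau)$.

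For points $z$ of a ball $B_1(\xi)$ not centred at $z$, repeat the computation on a cone with vertex $z$, fixed opening and height $1/2$, contained in $B_1(\xi)$, as in Step 2 of the proof of Lemma~\ref{prop:lowbound}. This gives $\int_{B_1(\xi)}\mathcal P_{\ln}(\tau,z-y)\,dy\ge\kappa>0$, uniformly in $\xi$, $z\in B_1(\xi)$ and $\tau\in(0,\tau_0]$. This is the missing ingredient, and with it the argument closes:
\begin{enumerate}
\item Take $A_\xi\asymp(1+|\xi|)^{2a-N}$ as a lower bound of the linear term on $B_1(\xi)$.
\item Run the iteration $v_{j+1}(t)=A_\xi+\kappa\int_a^t f(v_j(s))\,ds$; induction gives $u\ge v_j$ on $B_1(\xi)$.
\item The monotone limit solves $v'=\kappa f(v)$, $v(a)=A_\xi$.
\item By \eqref{con:osgood-zero}, $v$ hits $\delta$ by a time independent of $\xi$.
\end{enumerate}

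The workarounds you propose do not substitute for this estimate.
\begin{itemize}
\item Integrating over all of space produces $\infty\cdot\inf g$ with $\inf g=0$, since your $\varepsilon(x)\to0$; this gives nothing.
\item Restricting to lags $t-s\ge h$ with $h>0$ fixed yields a delay inequality whose grid values are bounded above by the explicit Euler iterates $v_{k+1}=v_k+\kappa_h h f(v_k)$. That scheme does not reach $\delta$ in a time uniform as the starting value $\varepsilon\to0$. For $f(s)=s^p$ with $0<p<1$, one has $v_k\approx(\kappa_h h)^{(1-p^k)/(1-p)}\varepsilon^{p^k}$, so roughly $\log\log(1/\varepsilon)/\log(1/p)$ steps of length $h$ are needed.
\end{itemize}
Since $\varepsilon(x)\to0$ as $|x|\to\infty$, at any fixed time only a bounded region would have reached $\delta$. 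A uniform level $\delta$ on an unbounded set is exactly what the divergence in Step 3 requires, so the fixed-lag route cannot close the argument.
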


In particular, if $f(u)=u^p,0<p<1$, Theorem~\ref{thm:instantaneous-nonexistence-osgood} yields
instantaneous nonexistence for every nontrivial nonnegative initial datum.
This is in sharp contrast with the classical heat equation. Indeed, Aguirre and Escobedo~\cite{Aguirre-Escobedo} proved the
existence, uniqueness, and regularity of global solutions for a broad
class of nontrivial nonnegative initial data.

\smallskip

We next address the local well-posedness of the Cauchy problem
\eqref{eq:main-cauchy-problem}, namely, the existence and uniqueness of
nonnegative solution \eqref{eq:mild-solution} on a short time interval. For the classical
Fujita problem, the heat semigroup is bounded on
\(L^\infty(\mathbb R^N)\), so bounded initial data provide a natural
framework for the local theory. In the present setting, however, the
linear flow is given by \eqref{eq:linear solution},
and the nonintegrability of \(\mathcal P_{\ln}(t,\cdot)\) at spatial
infinity shows that boundedness of the initial datum alone is not
sufficient, some decay at infinity is also required.

For \(\alpha>0\), we therefore introduce the weighted space
\[
L^\infty_\alpha(\mathbb R^N)
:=
\left\{
g\in L^\infty_{\mathrm{loc}}(\mathbb R^N):
\|g\|_{L^\infty_\alpha}
:=
\operatorname*{ess\,sup}_{x\in\mathbb R^N}
(1+|x|)^\alpha |g(x)|
<\infty
\right\}.
\]

Throughout the remainder of the paper, we use the following standing
assumptions:
\[
(\mathcal U_\alpha)
\qquad
0\le u_0\not\equiv0,
\quad
u_0\in L^\infty_\alpha(\mathbb R^N)
\quad
\text{for some }\alpha>0,
\]
\[
(\mathcal F)
\qquad
f:[0,\infty)\longrightarrow[0,\infty)
\text{ is locally Lipschitz continuous and nondecreasing},
\ \ 
f(0)=0.
\]

\begin{remark}
\label{rem:near-zero-osgood-divergence}
Under assumption \((\mathcal F)\), the Osgood integral necessarily
diverges at the origin: $\int_{0^+}\frac{d\sigma}{f(\sigma)}=+\infty.$ Indeed, since \(f\) is locally Lipschitz
continuous and \(f(0)=0\), there exist \(\delta>0\) and \(C>0\) such
that $0\le f(s)\le Cs$ for every $s\in[0,\delta].$

\end{remark}

Next, we introduce the lifespan of the corresponding linear evolution.

\begin{definition}
\label{def:linear-lifespan}
The linear lifespan associated with \(u_0\) is defined by
\[
T_{\mu,0}
:=
\sup\left\{
\tau\in\left(0,\frac N2\right]:
\sup_{0<t<\tau}
\|S_{\ln}(t)u_0\|_{L^\infty(\mathbb R^N)}
<\infty
\right\}.
\]
This quantity is independent of \(\mu>0\) owing to the linearity of the heat equation.
\end{definition}

For initial data satisfying
\(u_0\in L^\infty_\alpha(\mathbb R^N)\), the linear flow is well defined
and locally uniformly bounded on \((0,T_\alpha)\). More precisely, by
Lemma~\ref{lem:weighted-linear-flow-bound}, $T_{\mu,0}\ge T_\alpha,$
where
\begin{equation}\label{eq:guaranteed-linear-lifespan}
T_\alpha
:=
\frac12\min\{\alpha,N\}.
\end{equation}
If, moreover, $u_0(x)\asymp(1+|x|)^{-\alpha}$ as $|x|\to\infty,$
then this lower bound is sharp, namely, $T_{\mu,0}=T_\alpha.$
\smallskip

For the solution of the linear problem,
\begin{equation}\label{solu:linear}
    u_\mu(t,x):=\mu S_{\ln}(t)u_0(x),
\qquad \mu>0,
\end{equation}
we next describe its behavior near the maximal linear lifespan
\(T_{\mu,0}\).

\begin{proposition}\label{prop:linear-terminal-behavior}
Let \(u_\mu\) be the linear solution defined in
\eqref{solu:linear}. Then the following claims hold.
\begin{enumerate}
\item[\textup{(i)}]
For every \(T_*\in(0,N/2)\), there exists an initial datum \(u_0\)
satisfying \((\mathcal U_\alpha)\), with \(\alpha=2T_*\), such that
\[
T_{\mu,0}=T_*
\qquad\text{and}\qquad
\sup_{0<t\le T_{\mu,0}}
\|u_\mu(t,\cdot)\|_{L^\infty(\mathbb R^N)}
<\infty,
\]
whereas $u_\mu(t,0)=\infty$ for every $t\in\left(T_{\mu,0},\frac N2\right).$

\item[\textup{(ii)}]
For every \(T_*\in(0,N/2)\), there exists an initial datum \(u_0\)
satisfying \((\mathcal U_\alpha)\), with \(\alpha=2T_*\), such that
\(T_{\mu,0}=T_*\) and, for some \(c>0\),
\[
u_\mu(t,0)
\ge
\frac{c\mu}{T_{\mu,0}-t}
\qquad
\text{for every }t\in(\frac{T_*}{2},T_*).
\]

\item[\textup{(iii)}]
If \(T_{\mu,0}=N/2\), then
\[
\sup_{0<t<N/2}
\|u_\mu(t,\cdot)\|_{L^\infty(\mathbb R^N)}
=\infty.
\]
\end{enumerate}
\end{proposition}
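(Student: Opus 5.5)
We work directly with the explicit kernel $\mathcal P_{\ln}(t,x)=\mathcal P_0(t)|x|^{2t-N}$ and radial profiles. For $T_*\in(0,N/2)$ and $\alpha=2T_*$, the key elementary fact is the following. For $u_0$ supported away from the origin, say $u_0=0$ on $B_1$, we have
\[
u_\mu(t,0)=\mu\mathcal P_0(t)\int_{|y|\ge1}|y|^{2t-N}u_0(y)\,dy,
\]
so the behaviour of $u_\mu(t,0)$ is governed by the convergence of $\int^\infty r^{2t-1}u_0(r)\,dr$ in the radial variable.

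\emph{Part (i).} Take $u_0(x)=(1+|x|)^{-\alpha}(\ln(e+|x|))^{-2}$. This lies in $L^\infty_\alpha$, so $T_{\mu,0}\ge T_\alpha=T_*$ by Lemma~\ref{lem:weighted-linear-flow-bound}.

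For the uniform bound up to $t\le T_*$, split $\int|x-y|^{2t-N}u_0(y)\,dy$ into the regions $|y-x|\le1$, $|y|\le|x|/2$ with $|x-y|>1$, and $|x-y|>1$ with $|y|>|x|/2$. The first region is bounded since $2t-N>-N$ and $t$ stays away from $0$ only mildly; $\mathcal P_0(t)\to0$ as $t\to0$ helps. On the far region the integrand is at most $C|y-x|^{2t-N}|y|^{-2T_*}\ln^{-2}|y|$. At $t=T_*$ the worst case is $x=0$, where $\int^\infty r^{-1}\ln^{-2}r\,dr<\infty$; for general $x$ one uses a dyadic/convolution estimate that is uniform in $x$. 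Monotonicity of $|z|^{2t-N}$ in $t$ on $|z|\ge1$ gives uniformity over $t\le T_*$.

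For $t>T_*$, $u_\mu(t,0)\ge c\int^\infty r^{2t-2T_*-1}\ln^{-2}r\,dr=\infty$. Hence $T_{\mu,0}=T_*$ and the finiteness holds on $(0,T_*]$. The main technical step is this uniform-in-$x$ estimate at the endpoint $t=T_*$. I would handle it via the standard bound $\int|x-y|^{-a}(1+|y|)^{-b}\,dy\lesssim(1+|x|)^{N-a-b}$ when $a+b>N$, refined with the log factor: the factor $\ln^{-2}$ supplies the integrability exactly in the borderline case $a+b=N$.

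\emph{Part (ii).} Take $u_0(x)=(1+|x|)^{-\alpha}$, which again lies in $L^\infty_\alpha$, so $T_{\mu,0}\ge T_*$. For $t<T_*$,
\[
u_\mu(t,0)\ge\mu\mathcal P_0(t)\int_{|y|\ge1}|y|^{2t-N}2^{-\alpha}|y|^{-\alpha}\,dy
=c\mu\mathcal P_0(t)\frac{|S^{N-1}|}{2(T_*-t)}.
\]
On $t\in(T_*/2,T_*)$ we have $\mathcal P_0(t)$ bounded below, since $\Gamma$ is continuous and positive on the relevant range. This gives the bound with $c>0$, and it also shows $\sup_{t<T_*}\|u_\mu\|_\infty=\infty$, so $T_{\mu,0}=T_*$.

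\emph{Part (iii).} Suppose instead $\sup_{0<t<N/2}\|u_\mu(t)\|_\infty=M<\infty$. Pick a ball $B_R$ with $\int_{B_R}u_0=m>0$. For $|x|\le1$ and $y\in B_R$ we have $|x-y|\le R+1$ and $2t-N<0$, so
\[
u_\mu(t,x)\ge\mu\mathcal P_0(t)(R+1)^{2t-N}m .
\]
Since $\Gamma\bigl(\tfrac{N-2t}{2}\bigr)\to\infty$ as $t\to N/2^-$ while $\Gamma(t)\to\Gamma(N/2)$, we get $\mathcal P_0(t)\to\infty$. Hence $u_\mu(t,x)\to\infty$, a contradiction, so the supremum is infinite. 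Note this argument actually needs no hypothesis on $T_{\mu,0}$; when $T_{\mu,0}=N/2$ it is exactly the claim.
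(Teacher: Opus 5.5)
Your proposal is correct. Parts (ii) and (iii) match the paper.

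\textbf{Part (ii).} The paper uses the same datum $(1+|x|)^{-2T_*}$ and the same radial lower bound at $x=0$. You get $T_{\mu,0}\le T_*$ from the unboundedness as $t\uparrow T_*$, whereas the paper uses divergence of the integral for $t\ge T_*$; both are valid.

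\textbf{Part (iii).} You reprove inline Step~1 of Lemma~\ref{prop:lowbound}, which is exactly what the paper cites. Your remark that the hypothesis $T_{\mu,0}=N/2$ is superfluous is also right.

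\textbf{Part (i).} The real difference is the uniform-in-$x$ bound. The paper notes that $|x|^{2t-N}$ and $u_0$ are both radially nonincreasing. The Riesz (Hardy--Littlewood) rearrangement inequality then gives $S_{\ln}(t)u_0(x)\le S_{\ln}(t)u_0(0)$. Everything reduces to the one-dimensional integral $\int_0^\infty r^{2t-1}(1+r)^{-\alpha}(\log(e+r))^{-2}\,dr$, split at $r=1$, with $\sup_{0<t\le T_*}\mathcal P_0(t)/t<\infty$ from \eqref{eq:zero} absorbing the factor $1/(2t)$ near the origin.

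Your three-region decomposition avoids rearrangement but must establish the borderline log-refined convolution bound by hand. It does hold once you split your far region further. The region $|y|\le|x|/2$ and the annular piece $|x|/2<|y|\le 2|x|$ each contribute $O\bigl((\log(e+|x|))^{-2}\bigr)$. The piece $|y|>2|x|$ is controlled by $\int^\infty r^{-1}(\log(e+r))^{-2}\,dr$. This route also yields spatial decay, in the spirit of Lemma~\ref{lem:linear-profile-fast-decay}, but it is longer.

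Your sketch of this step is the least precise part of the proposal. Two fixes:
\begin{itemize}
\item Write the near-diagonal term explicitly as $\mathcal P_0(t)\,|\mathbb S^{N-1}|\,(2t)^{-1}\|u_0\|_{L^\infty}$. It is bounded precisely because $\mathcal P_0(t)\le Ct$.
\item Use the weight $(1+|y|)^{-\alpha}(\log(e+|y|))^{-2}$ rather than $|y|^{-2T_*}\ln^{-2}|y|$, which is singular at $|y|=1$.
\end{itemize}
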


\begin{remark}
For the classical heat equation, the linear semigroup is globally
defined, is contractive on \(L^\infty(\mathbb R^N)\), and, for
\(u_0\in L^1(\mathbb R^N)\), satisfies
\[
\|e^{t\Delta}u_0\|_{L^\infty(\mathbb R^N)}
\le Ct^{-N/2}\|u_0\|_{L^1(\mathbb R^N)},
\]
so that the linear solution decays as \(t\to\infty\). The logarithmic
heat flow exhibits a fundamentally different phenomenon. Its maximal
linear lifespan may be finite, and the obstruction to continuation may
come solely from the loss of spatial integrability of the convolution
against the tail of \(u_0\), rather than from terminal
\(L^\infty\)-blow-up. Consequently, when \(T_{\mu,0}<N/2\), the 
solution may remain uniformly bounded up to \(T_{\mu,0}\) while being
infinite at every later time. By contrast, if \(T_{\mu,0}=N/2\), then
the evolution necessarily becomes unbounded as the terminal time
is approached.
\end{remark}

We next examine the influence of the nonlinearity \(f\). For
\(\tau\in(0,T_{\mu,0})\), we introduce the weighted space
\[
\mathcal X_{\tau,u_0}
:=
\left\{
v:(0,\tau)\times\mathbb R^N\longrightarrow\mathbb R:
\|v\|_{S_{\ln}(\cdot)u_0,\tau}<\infty
\right\},
\]
where
\[\|v\|_{S_{\ln}(\cdot)u_0,\tau}
:=
\operatorname*{ess\,sup}_{\substack{0<t<\tau\\x\in\mathbb R^N}}
\frac{|v(t,x)|}{S_{\ln}(t)u_0(x)}.\]
Since \(S_{\ln}(t)u_0(x)>0\) for every
\((t,x)\in(0,\tau)\times\mathbb R^N\),
\(\|\cdot\|_{S_{\ln}(\cdot)u_0,\tau}\) defines a norm on
\(\mathcal X_{\tau,u_0}\).

This space is naturally adapted to the locally logarithmic heat
semigroup. For the classical and fractional heat semigroups, an
\(L^\infty\)-bound is preserved by the linear evolution, so that the
solution can be restarted from any positive time. In contrast, since
\(S_{\ln}(t)\) is not bounded on \(L^\infty(\mathbb R^N)\), boundedness
of \(v(s,\cdot)\) alone does not ensure that
\(S_{\ln}(t-s)v(s,\cdot)\) is finite or bounded. The defining condition
of \(\mathcal X_{\tau,u_0}\) provides the required control: for
\(v\in\mathcal X_{\tau,u_0}\) and \(0<s<t<\tau\),
\[
S_{\ln}(t-s)|v(s,\cdot)|(x)
\le
\|v\|_{S_{\ln}(\cdot)u_0,\tau}
S_{\ln}(t)u_0(x).
\]
Thus the natural linear profile is propagated from every positive time,
which allows the evolution and the nonlinear Duhamel term to be
restarted within the same weighted framework. This motivates the
following definition of a mild solution.

\begin{definition}
\label{def:mild-solution}
Assume that \((\mathcal U_\alpha)\) and \((\mathcal F)\) hold, let
\(\mu>0\), and let \(T\in(0,T_{\mu,0}]\). A measurable function $u:(0,T)\times\mathbb R^N\longrightarrow[0,\infty)$
is called a nonnegative mild solution of
\eqref{eq:main-cauchy-problem} on \((0,T)\) if
\[
u|_{(0,\tau)\times\mathbb R^N}\in\mathcal X_{\tau,u_0}
\qquad
\text{for every }\tau\in(0,T),
\]
and if \eqref{eq:mild-solution} holds for every
\((t,x)\in(0,T)\times\mathbb R^N\).
\end{definition}

We define the maximal existence time corresponding to the initial datum
\(\mu u_0\) by
\[T_{\mu,f}
:=
\sup\big\{
T\in(0,T_{\mu,0}]:
\eqref{eq:main-cauchy-problem}
\text{ admits a nonnegative mild solution on }(0,T)
\big\}.\]
We adopt the convention that \(T_{\mu,f}=0\) if no nonnegative mild
solution exists on any time interval of positive length. Thus, $T_{\mu,f}\in[0,T_{\mu,0}].$ Moreover, when \(f\equiv0\), the two definitions are consistent.

The following theorem provides the basic well-posedness theory and two
complementary criteria for the maximal existence time.

\begin{theorem}\label{thm:local-wellposedness}
Assume that $(\mathcal{U}_\alpha)$ and $(\mathcal{F})$ hold. Then, for every $\mu > 0$, there exists a unique nonnegative mild solution $u_\mu$ of \eqref{eq:main-cauchy-problem} on $(0, T_{\mu,f}) \times \mathbb{R}^N$ with $T_{\mu,f} > 0$. Furthermore, 
\[
u_\mu \in C\bigl((0, T_{\mu,f}) \times \mathbb{R}^N\bigr) \quad \text{and} \quad u_\mu \in L^\infty\bigl((0, \tau) \times \mathbb{R}^N\bigr) \quad \text{for every } \tau \in (0, T_{\mu,f}).
\]
In addition, the following assertions hold:
\begin{enumerate}
    \item If $\displaystyle\limsup_{s\to\infty} \frac{f(s)}{s} < \infty$, then $T_{\mu,f} = T_{\mu,0}$ for every $\mu > 0$.
    \item If the Osgood condition at infinity holds:
    \begin{equation}\label{eq:osgood-condition-large}
        \int_{s_*}^{\infty} \frac{d\sigma}{f(\sigma)} < \infty \quad \text{for some  } s_* > 0, 
    \end{equation}
      then $\displaystyle\lim_{\mu\to\infty} T_{\mu,f} = 0$.
\end{enumerate}
\end{theorem}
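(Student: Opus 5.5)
The strategy is a fixed-point argument in the weighted space $\mathcal X_{\tau,u_0}$, using the propagation identity. By the semigroup property $S_{\ln}(t-s)S_{\ln}(s)=S_{\ln}(t)$ for $0<s<t<N/2$, every $v\in\mathcal X_{\tau,u_0}$ satisfies
\[
S_{\ln}(t-s)|v(s)|\le \|v\|\,S_{\ln}(t)u_0 .
\]

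\textbf{Local existence and uniqueness.} Fix $\tau_0<T_\alpha\le T_{\mu,0}$ and set $M:=\sup_{0<t<\tau_0}\|S_{\ln}(t)u_0\|_\infty$, which is finite by Lemma~\ref{lem:weighted-linear-flow-bound}. Let $B_R$ be the set of $v\in\mathcal X_{\tau,u_0}$ with $0\le v\le R\,S_{\ln}(t)u_0$, where $R=2\mu$. Then $0\le v\le RM$. Let $L$ be the Lipschitz constant of $f$ on $[0,RM]$. Since $f(0)=0$, we have $f(v)\le Lv\le LR\,S_{\ln}(s)u_0$. Define $\Phi(v)=\mu S_{\ln}(t)u_0+\int_0^tS_{\ln}(t-s)f(v(s))\,ds$. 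Then
\[
0\le\Phi(v)\le(\mu+\tau LR)\,S_{\ln}(t)u_0,
\qquad
|\Phi v-\Phi w|\le\tau L\|v-w\|\,S_{\ln}(t)u_0 .
\]
Choosing $\tau L<1/2$ makes $\Phi$ a contraction on $B_R$. This gives a solution with $T_{\mu,f}>0$, and boundedness follows from $u\le RM$.

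Uniqueness on any $(0,\tau)$ with $\tau<T_{\mu,f}$ uses the same Lipschitz estimate. Both solutions lie in $\mathcal X_{\tau,u_0}$ and are bounded by $KM_\tau$, so $w=|u-\tilde u|$ satisfies $w(t)\le L\int_0^t S_{\ln}(t-s)w(s)\,ds$. Set $\phi(t)=\operatorname{ess\,sup}_x w(t,x)/S_{\ln}(t)u_0(x)$. Then $\phi(t)\le L\int_0^t\phi$, and Gronwall gives $\phi\equiv0$.

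Maximal continuation comes from restarting on consecutive intervals, which the propagation identity makes possible. Solutions on overlapping intervals agree by uniqueness, so they glue into a single solution on $(0,T_{\mu,f})$. Continuity in $(t,x)$ follows from dominated convergence, using the continuity of $\mathcal P_0(t)|x|^{2t-N}$ in $t$ and the domination $f(u(s))\lesssim S_{\ln}(s)u_0$. Near the diagonal $s\to t$, the kernel's local integrability makes the integral $\int_{t-\epsilon}^t$ small uniformly.

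\textbf{Assertion 1.} Assume $\limsup_{s\to\infty} f(s)/s<\infty$. Combined with the local Lipschitz property and $f(0)=0$, this gives $f(s)\le Ks$ for all $s\ge0$. The Picard iterates starting from $u^{(0)}=\mu S_{\ln}(t)u_0$ are monotone, since $f$ is nondecreasing. Inductively,
\[
u^{(n)}\le \mu\sum_{k\le n}\frac{(Kt)^k}{k!}\,S_{\ln}(t)u_0\le\mu e^{Kt}S_{\ln}(t)u_0 .
\]
Hence the iterates converge monotonically to a solution on every $(0,\tau)$ with $\tau<T_{\mu,0}$, and this solution lies in $\mathcal X_{\tau,u_0}$. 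It is also bounded, because $S_{\ln}(t)u_0$ is bounded on $(0,\tau)$ by the definition of $T_{\mu,0}$. By uniqueness it coincides with $u_\mu$, so $T_{\mu,f}=T_{\mu,0}$.

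\textbf{Assertion 2.} This is the main obstacle: a lower bound with a positive ODE comparison despite the nonintegrable kernel. The plan is to localise. Pick a ball $B=B_1(x_0)$ and $\delta>0$ with $\mu S_{\ln}(t)u_0\ge c_0\mu$ on $[\tau_1,\tau_2]\times B$. Small $t$ is not needed, because we are free to start at a fixed time $t_1$ at which $S_{\ln}(t_1)u_0$ has a positive lower bound on $B$. For a test function I would use $\psi(y)=\mathcal P_{\ln}(\epsilon,x_0-y)\mathbf 1_B$, normalised, or simply a Kaplan-type argument. Set $\Psi(t)=\int S$-weighted mass; I expect the lower bound $S_{\ln}(h)g(x)\ge\kappa(h)\,\inf_B g$ on $B$, with $\kappa(h)=\mathcal P_0(h)\int_{B_2}|z|^{2h-N}dz$. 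Splitting time into small steps $h$ and applying Jensen for convex $f$, or monotonicity of $f$ otherwise, gives for $m(t)=\inf_B u(t)$ a discrete inequality $m(t+h)\ge\kappa(h)\bigl(m(t)+hf(m(t))\bigr)$. Since $\kappa(h)\to1$ as $h\to0$, this behaves like the ODE $m'\ge f(m)-O(\ln(1/h))m$. A cleaner route is to note $\kappa(h)\ge1-Ch\ln(1/h)$, so on a level where $f(m)\ge 2Cm\ln$ the loss is dominated. The Osgood condition then forces blow-up of $m$ by time
\[
t_1+C\int_{c_0\mu}^\infty\frac{d\sigma}{f(\sigma)}\longrightarrow t_1 \quad\text{as }\mu\to\infty .
\]
Letting $t_1\to0$ is the delicate point, because $\inf_B S_{\ln}(t_1)u_0$ may degenerate as $t_1\to 0$. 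I would fix $t_1$ first and then take $\mu$ large so that $\mu\inf_BS_{\ln}(t_1)u_0\to\infty$, which gives $\limsup_{\mu\to\infty} T_{\mu,f}\le t_1$. Since $t_1$ is arbitrary, this yields $\lim_{\mu\to\infty}T_{\mu,f}=0$.
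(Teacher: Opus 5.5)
Your local existence, uniqueness and gluing arguments match the paper, and so does Assertion~1: your Picard bound $\mu e^{Kt}S_{\ln}(t)u_0$ is the paper's supersolution $\mu e^{\rho_0t}H$. The gap is in Assertion~2.

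\textbf{What fails.} Your key estimate $S_{\ln}(h)g\ge\kappa(h)\inf_B g$ on $B=B_1(x_0)$, with $\kappa(h)=\mathcal P_0(h)\int_{B_2}|z|^{2h-N}\,dz$, points the wrong way. For $x\in B$ one has $B\subset B_2(x)$, so $\int_B\mathcal P_{\ln}(h,x-y)\,dy\le\kappa(h)$.

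The constant you actually need is $\inf_{x\in B}\int_B\mathcal P_{\ln}(h,x-y)\,dy$, and it does not tend to $1$. The radial mass of the kernel is approximately $2h\,r^{2h-1}\,dr$, which concentrates at $r=0$ as $h\to0$. For $x$ at or near $\partial B$, only half of each small sphere around $x$ lies in $B$, so this infimum tends to $1/2$. Hence neither $\kappa(h)\to1$ nor $\kappa(h)\ge1-Ch\ln(1/h)$ is available.

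As a result, the restart recursion $m(t+h)\ge\kappa\bigl(m(t)+hf(m(t))\bigr)$ discards a fixed fraction of $m$ at every step. A recursion with fixed step never becomes infinite, so your scheme genuinely needs $h\to0$ and hence $\kappa(h)\to1$. Moreover, bounding $\int_0^hS_{\ln}(h-s)f(u(t+s))\,ds$ below by a multiple of $hf(m(t))$ presupposes $m(t+s)\ge m(t)$, which you have not shown.

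\textbf{How to repair it.} Do not restart; keep the linear part of the original Duhamel formula, which you already control: $\mu S_{\ln}(t)u_0\ge c_0\mu$ on $[t_1,t_2]\times B$. For the nonlinear part, any positive kernel mass suffices, for instance
\[
\kappa_*:=\inf\Bigl\{\int_B\mathcal P_{\ln}(\theta,x-y)\,dy:\ x\in B,\ 0<\theta\le t_2\Bigr\}>0 .
\]
Positivity follows from the cone construction in the proof of Lemma~\ref{prop:lowbound} together with \eqref{eq:zero}. Discard the Duhamel contribution from $(0,t_1)$. A solution existing beyond $t_2$ then satisfies
\[
m(t)\ge c_0\mu+\kappa_*\int_{t_1}^t f(m(s))\,ds,\qquad t\in[t_1,t_2].
\]
Set $w(t):=c_0\mu+\kappa_*\int_{t_1}^tf(m(s))\,ds$. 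Then $w\le m$ and $w'\ge\kappa_*f(w)$ a.e. The Osgood condition forces $m$ to become infinite before $t_1+\kappa_*^{-1}\int_{c_0\mu}^\infty d\sigma/f(\sigma)$, which is $<t_2$ once $\mu$ is large. The size of $\kappa_*$ only rescales time, so no constant close to $1$ is ever needed.

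This is exactly the paper's route via \eqref{closed-ineq-ball}: $h$ is fixed, $t_0=2h$, $t_1=3h$, and $\mu_h$ is chosen so that the Osgood time is below $h$. Your final step (fix $t_1$, let $\mu\to\infty$, then let $t_1\to0$) coincides with it.
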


The proof combines the natural weighted framework introduced above with
the Banach fixed-point theorem, supersolution constructions, and scalar
ODE comparisons. Local existence is obtained by applying the Banach
fixed-point theorem in \(\mathcal X_{\tau,u_0}\) for sufficiently small
\(\tau>0\), while uniqueness and continuity follow from Lemma \ref{lem:mild-solution-uniqueness},
Lemmas~\ref{lem:linear-flow-continuity} and
Lemma~\ref{lem:duhamel-term-continuity}. Under at most linear
growth of \(f\) at infinity, an explicit supersolution of the form $\mu e^{Ct}S_{\ln}(t)u_0$
shows that the nonlinearity does not shorten the linear lifespan. Under the Osgood condition at infinity (\ref{eq:osgood-condition-large}), a compact-set lower bound yields a scalar ODE comparison whose blow-up time tends to zero as \(\mu\to\infty\), and hence \(T_{\mu,f}\to0\).

\smallskip

\begin{remark}\label{rem:initial-trace}
Under assumption \((\mathcal U_\alpha)\), the initial datum need not be
continuous, and therefore pointwise convergence at every
\(x\in\mathbb R^N\) cannot in general be expected. Nevertheless, the
mild solution attains its initial datum in the following trace sense:
\[
u_\mu(t,x)\longrightarrow \mu u_0(x)
\qquad\text{as }t\downarrow0
\]
at every Lebesgue point \(x\) of \(u_0\), and hence almost everywhere in
\(\mathbb R^N\). Consequently, $u_\mu(t,\cdot)\longrightarrow \mu u_0$ in $L^p_{\mathrm{loc}}(\mathbb R^N)$
for every \(1\le p<\infty\). Here the linear term converges to the
initial datum, while the nonlinear Duhamel term vanishes as
\(t\downarrow0\).

If, in addition, $u_0\in C(\mathbb R^N)\cap L^\infty_\alpha(\mathbb R^N),$
then
\[
\lim_{t\downarrow0}
\|u_\mu(t,\cdot)-\mu u_0\|_{L^\infty(\mathbb R^N)}=0.
\]
Thus \(u_\mu\) extends continuously to \(t=0\), with $u_\mu(0,\cdot)=\mu u_0.$
The proof follows readily from  $S_{\ln}(t)u_0(x)\longrightarrow u_0(x)$ as $t\downarrow0$
at every Lebesgue point of \(u_0\), which follows from
\(u_0\in L^\infty_\alpha(\mathbb R^N)\), together with the
estimate for the nonlinear Duhamel term, and is therefore omitted.
\end{remark}

Theorem~\ref{thm:instantaneous-nonexistence-osgood}, together with
Remark~\ref{rem:near-zero-osgood-divergence}, accounts for the sublinear and linear regimes of the
model nonlinearity \(f(u)=u^p\). We now turn to genuinely superlinear
nonlinearities, for which the interaction between the growth of \(f\)
and the spatial decay of the initial datum becomes substantially more
delicate. To isolate the role of the initial tail, consider the model profile
\[
u_0(x)=(1+|x|)^{-\alpha},
\qquad
x\in\mathbb R^N,
\]
with \(\alpha>0\). The position of \(\alpha\) relative to \(N\)
determines the linear lifespan: $T_{\mu,0}
=
\frac12\min\{\alpha,N\}.$
Although the cases \(\alpha=N\) and \(\alpha>N\) have the same linear
lifespan \(N/2\), their terminal growth rates are different, see Lemma~\ref{prop:lowbound}.
Consequently, the three decay regimes $0<\alpha<N,\,\alpha=N$ and $\alpha>N$
give rise to different critical growth conditions on the nonlinearity.

\smallskip

We first consider the two noncritical decay regimes, which can be
treated by a unified argument. More precisely, we assume either that
there exists \(\alpha>N\) such that
\begin{equation}\label{eq:initial-fast-decay-superlinear}
\lim_{R\to+\infty}
\operatorname*{ess\,sup}_{|x|\ge R}
(1+|x|)^\alpha u_0(x)<+\infty,
\end{equation}
or  there exist \(0<\alpha<N\) such that 
\begin{equation}\label{eq:initial-slow-decay-lower}
0<
\lim_{R\to+\infty}
\operatorname*{ess\,inf}_{|x|\ge R}
(1+|x|)^\alpha u_0(x)
\le
\lim_{R\to+\infty}
\operatorname*{ess\,sup}_{|x|\ge R}
(1+|x|)^\alpha u_0(x)
<+\infty.
\end{equation}

Although the corresponding linear terminal times are different, the
same method applies in both cases. The following weighted Osgood tail condition at infinity plays the role of a
superquadratic growth assumption:
\begin{equation}\label{eq:tail-condition-main}
\liminf_{\rho\to+\infty}
\rho\int_\rho^\infty\frac{d\sigma}{f(\sigma)}=0. 
\end{equation}
Indeed, for the model nonlinearity \(f(s)=s^p\),  \eqref{eq:tail-condition-main} holds precisely when \(p>2\). Moreover, \eqref{eq:tail-condition-main} implies the Osgood condition at
infinity \eqref{eq:osgood-condition-large}.

In \cite{Laister-Sierzega}, Laister and Sier\.{z}\k{e}ga established
a blow-up dichotomy for semilinear fractional heat equations,
characterizing the blow-up property in terms of an integral condition
on the nonlinearity. In addition to the analogue of \((\mathcal F)\),
their framework assumes that \(f\) is convex, satisfies the Osgood
condition at infinity \eqref{eq:osgood-condition-large},
and obeys an additional scaling condition near zero. Motivated by their
result, we now establish an analogous lifespan dichotomy for the
logarithmic heat equation. Our argument does not require the convexity of \(f\). Instead, in the
complementary threshold regime, we impose the following quantitative
tail-comparability condition:
\begin{equation}\tag{\(\mathcal F_\infty\)}
\label{eq:osgood-tail-comparability}
\limsup_{\rho\to\infty}
\frac{f(\rho)}{\rho}
\int_\rho^\infty\frac{d\sigma}{f(\sigma)}
<\infty.
\end{equation}
Condition \((\mathcal F_\infty)\) is independent of convexity. In particular, for the power
nonlinearity \(f(\sigma)=\sigma^p\), condition
\((\mathcal F_\infty)\) holds if and only if \(p>1\).
 In the complementary regime, \((\mathcal F_\infty)\) converts the
failure of the weighted Osgood tail condition into the quadratic growth
bound at infinity required for the construction of small-data
supersolutions.

We now establish a complete lifespan dichotomy in the noncritical decay
regimes: the weighted Osgood tail condition forces premature blow-up for
every positive amplitude, whereas its failure, under the complementary
quadratic-growth assumption, leads to a sharp threshold phenomenon.

\begin{theorem}(Blow-up dichotomy in the non-critical regime)
\label{thm:noncritical-tail-dichotomy}
Assume that $(\mathcal U_\alpha)$ and $(\mathcal F)$ hold.
\begin{enumerate}
\item[\textup{(a)}]
For every $\mu>0,$ if the weighted Osgood tail condition \eqref{eq:tail-condition-main} holds, then $0<T_{\mu,f}<\frac{N}{2}$;

If, in addition, \eqref{eq:initial-slow-decay-lower} holds for some $\alpha\in(0,N)$, then $0<T_{\mu,f}<T_{\mu,0}=\frac{\alpha}{2}.$

\item[\textup{(b)}]
Suppose that \eqref{eq:tail-condition-main} fails and that \eqref{eq:osgood-tail-comparability} holds.
\begin{enumerate}
\item[\textup{(i)}]
If \eqref{eq:initial-fast-decay-superlinear} holds for some $\alpha>N$, then there exists $\mu_\infty^*\in(0,\infty)$ such that
\[ 0 < T_{\mu,f} = T_{\mu,0}=\frac{N}{2} \quad \text{for } 0 < \mu < \mu_\infty^*, \quad \text{and} \quad 0 < T_{\mu,f} < \frac{N}{2} \quad \text{for } \mu > \mu_\infty^*. \]

\item[\textup{(ii)}]
If \eqref{eq:initial-slow-decay-lower} holds for some $\alpha\in(0,N)$, then there exists $\mu_\alpha^*\in(0,\infty)$ such that
\[ 0 < T_{\mu,f} =T_{\mu,0}= \frac{\alpha}{2} \quad \text{for } 0 < \mu < \mu_\alpha^*, \quad \text{and} \quad 0 < T_{\mu,f} < \frac{\alpha}{2} \quad \text{for } \mu > \mu_\alpha^*.\]
\end{enumerate}
In both cases, $T_{\mu,f}\longrightarrow0$ as $\mu\to\infty.$
\end{enumerate}
\end{theorem}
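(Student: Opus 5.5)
The proof splits into a blow-up part (a) and a small-data global part (b), both organised around the linear terminal behaviour from Lemma~\ref{prop:lowbound} and Proposition~\ref{prop:linear-terminal-behavior}.

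\textbf{Part (a): blow-up before $N/2$ for every $\mu$.} Positivity $T_{\mu,f}>0$ is already in Theorem~\ref{thm:local-wellposedness}. For the upper bound I would argue by contradiction and assume the mild solution exists on $(0,T_{\mu,0})$.

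1. Fix $t_0$ close to the terminal time $T_{\mu,0}$. On the restarted interval, the mild formulation gives the lower bound
\[
u(t,x)\ge \mu S_{\ln}(t)u_0(x)+\int_{t_0}^t S_{\ln}(t-s)f(u(s))\,ds .
\]

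2. Choose a test quantity $m(t):=\int u(t,y)\,\psi(y)\,dy$ with a weight $\psi$ adapted to the kernel, e.g. $\psi$ an indicator of a ball or $\psi=\mathcal P_{\ln}(\tau-t,\cdot)$ itself. Apply Jensen in a non-convex form: since $f$ is nondecreasing, work on the set where $u\ge$ its average.

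3. Aim for the scalar differential inequality
\[
m'(t)\ge -c\,m(t)+\kappa f(m(t)).
\]
Here $-c\,m$ comes from the logarithmic generator on a ball, where $\mathcal P_0(t)$ is bounded for $t$ away from $0$ and $N/2$.

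4. The key input is that the linear part near the terminal time has size $m(t_0)\gtrsim \mu (T_{\mu,0}-t_0)^{-1}$. This holds by Lemma~\ref{prop:lowbound} for $\alpha\le N$, and in the slow-decay case via the tail $(1+|x|)^{-\alpha}$, as in Proposition~\ref{prop:linear-terminal-behavior}(ii).

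5. The ODE blow-up time starting from $\rho=m(t_0)$ is $\lesssim \int_\rho^\infty d\sigma/f(\sigma)$. Condition \eqref{eq:tail-condition-main} gives a sequence $\rho_k\to\infty$ with $\rho_k\int_{\rho_k}^\infty d\sigma/f(\sigma)\to0$.

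6. Choose $t_0$ with $\rho_k\approx \mu/(T_{\mu,0}-t_0)$, i.e. $T_{\mu,0}-t_0\approx\mu/\rho_k$. The blow-up time is then $\ll \mu/\rho_k\approx T_{\mu,0}-t_0$, so blow-up occurs strictly before $T_{\mu,0}$.

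7. For $\alpha>N$ the terminal profile at $N/2$ is still unbounded by Proposition~\ref{prop:linear-terminal-behavior}(iii), so the same argument gives $T_{\mu,f}<N/2$. In general one only needs the rate $(N/2-t)^{-1}$ at $x=0$; a weaker rate would need adjustment.

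\textbf{Part (b): small data reach $T_{\mu,0}$.} When \eqref{eq:tail-condition-main} fails, we have $\rho\int_\rho^\infty d\sigma/f\ge c>0$ for large $\rho$. Combined with \eqref{eq:osgood-tail-comparability}, this gives $f(\rho)\le C\rho^2$ for large $\rho$. Together with $f(s)\le Cs$ near $0$ from $(\mathcal F)$, we get $f(s)\le C(s+s^2)$ for all $s$. The steps are as follows.

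1. Seek a supersolution of the form $\bar u=2\mu e^{Ct}S_{\ln}(t)u_0$.

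2. Use the propagation inequality $S_{\ln}(t-s)[S_{\ln}(s)u_0]^2\le \|S_{\ln}(s)u_0\|_\infty S_{\ln}(t)u_0$. The Duhamel term then requires
\[
\mu\int_0^{T_{\mu,0}}\|S_{\ln}(s)u_0\|_\infty\,ds<\infty .
\]

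3. Here the terminal rate matters. For $\alpha>N$, Lemma~\ref{prop:lowbound} should give growth like $\log$ or $(N/2-t)^{-\beta}$ with $\beta<1$, hence integrable. In the slow-decay case the sup is bounded up to $\alpha/2$, as in Proposition~\ref{prop:linear-terminal-behavior}(i). This is precisely why $\alpha=N$ (rate $(T-t)^{-1}$, not integrable) is critical and is excluded.

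4. With this integrability, a monotone iteration below $\bar u$ shows $T_{\mu,f}=T_{\mu,0}$ for small $\mu$.

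\textbf{Threshold and large-$\mu$ behaviour.} Define $\mu^*:=\sup\{\mu:T_{\mu,f}=T_{\mu,0}\}$. Comparison (monotonicity in $\mu$) shows the set is an interval. By Theorem~\ref{thm:local-wellposedness}(2), $\mu^*<\infty$ and $T_{\mu,f}\to0$. Here \eqref{eq:osgood-condition-large} follows from $f\gtrsim$ superlinear growth via \eqref{eq:osgood-tail-comparability}, or is at least needed.

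\textbf{Main obstacle.} The main obstacle is step 3 of part (a), the scalar ODE inequality. It requires a lower bound of $S_{\ln}(t-s)f(u)$ by $f$ of an average without convexity, and a careful handling of the logarithmic operator's drift term. I would first try localising to a small ball $B_r$ where $\mathcal P_{\ln}(t-s,x-y)\ge c(t-s)\,r^{2(t-s)-N}$ and using $u\ge$ the infimum on $B_r$ directly, as in the compact-set argument behind Theorem~\ref{thm:local-wellposedness}(2).
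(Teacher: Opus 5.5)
There is a genuine gap in part (b), steps 2--3. Your supersolution rests on $\int_0^{T_{\mu,0}}\|S_{\ln}(s)u_0\|_{L^\infty}\,ds<\infty$, and this is false in both noncritical regimes. For $\alpha>N$ one has $u_0\in L^1(\R^N)$, and by \eqref{eq:n2} and dominated convergence
\[
S_{\ln}(s)u_0(0)=\mathcal P_0(s)\int_{\R^N}|y|^{2s-N}u_0(y)\,dy\sim\frac{c_N\|u_0\|_{L^1(\R^N)}}{N-2s}\qquad\text{as }s\uparrow\tfrac N2 .
\]
Under the lower half of \eqref{eq:initial-slow-decay-lower}, the tail gives $S_{\ln}(s)u_0(0)\ge c\,(\alpha-2s)^{-1}$, exactly as in Step~1 of the proof of Lemma~\ref{prop:lowbound}. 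Proposition~\ref{prop:linear-terminal-behavior}(i) does not help, because its datum carries the factor $(\log(e+|x|))^{-2}$ and violates \eqref{eq:initial-slow-decay-lower}.

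The linear lower bound you use in part (a), step 4, already forces $\|S_{\ln}(t)u_0\|_{L^\infty}\gtrsim(T_{\mu,0}-t)^{-1}$, so the two halves of your proposal contradict each other. Also, $\alpha=N$ is critical not because of the rate $(T-t)^{-1}$, which is the noncritical rate, but because of the stronger rate $(N-2t)^{-2}$ in \eqref{closed-ineq-ball-critical-tail}. Consequently your propagation inequality only gives
\[
\int_0^tS_{\ln}(t-s)\bigl((S_{\ln}(s)u_0)^2\bigr)\,ds\le\Bigl(\int_0^t\|S_{\ln}(s)u_0\|_{L^\infty}\,ds\Bigr)S_{\ln}(t)u_0 ,
\]
with a prefactor growing like $\log\frac1{T_{\mu,0}-t}$. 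This loss cannot be absorbed for any $\mu>0$. So $2\mu e^{Ct}S_{\ln}(t)u_0$ is not shown to be a supersolution up to $T_{\mu,0}$, and the small-data half of the threshold (in particular $\mu^*>0$) is left unproved.

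The missing idea is to exploit the faster \emph{spatial} decay of the square rather than its sup norm. The paper uses the profile $W=A(\gamma-2t)^{-1}(1+|x|)^{-(\gamma-2t)}$ with $\gamma\in\{N,\alpha\}$, which dominates $S_{\ln}(t)u_0$ by Lemma~\ref{lem:linear-profile-fast-decay}. Set $a=\gamma-2t$ and $b=\gamma-2s$; then $W(s,\cdot)^2$ decays like $(1+|y|)^{-2b}$. Lemma~\ref{lem:weighted-convolution} bounds its convolution against $|x-y|^{-(N-b+a)}$ by $C\bigl(\frac1{b-a}+\frac1{a+b}\bigr)(1+|x|)^{-a}$. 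The singular factor $\frac1{b-a}$ is cancelled by $\mathcal P_0\bigl(\frac{b-a}2\bigr)\le C\frac{b-a}{N-b+a}$, and Lemma~\ref{lem:time-integral-claim} bounds the remaining $b$-integral by $Ca^{-1}$. This gives $\int_0^tS_{\ln}(t-s)\bigl(W(s,\cdot)^2\bigr)\,ds\le C_*A\,W(t,\cdot)$ (Lemma~\ref{lem:quadratic-profile-estimate}) with no logarithmic loss. The argument is then finished by $\mathcal Q_\mu[W]\le W$ for small $A$ and $\mu$, monotone iteration, and the bound $W\le C_{\tau_0,\tau}S_{\ln}(t)u_0$ on $[\tau_0,\tau]$, which is needed for membership in $\mathcal X_{\tau,u_0}$.

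The rest of your plan is fine. Part (a) is the paper's argument once you work directly with the infimum inequality \eqref{closed-ineq-ball}. No generator drift $-cm$ or Jensen step is needed. The rate $\mu c_{K,\delta}/(N-2t)$ holds for every nontrivial $u_0$, because it comes from the blow-up of $\mathcal P_0$ at $N/2$, not from the tail. Finally, \eqref{eq:osgood-condition-large} follows at once from \eqref{eq:osgood-tail-comparability}, which gives $\Phi_f(\rho)\le C_\infty\rho/f(\rho)<\infty$ for large $\rho$.
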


The proof combines two complementary mechanisms. The lower estimate in
Lemma~\ref{prop:lowbound} captures the terminal growth of the linear
profile on a fixed compact set and, together with an ODE comparison,
forces premature blow-up under the weighted Osgood tail condition. In
the complementary regime, a time-dependent supersolution adapted to the
terminal profile, combined with sharp weighted convolution estimates,
yields full linear lifespan for sufficiently small amplitudes, whereas
the Osgood condition at infinity gives premature blow-up for sufficiently
large amplitudes. The monotonicity of \(\mu\mapsto T_{\mu,f}\) then
produces the critical thresholds \(\mu_\infty^*\) and
\(\mu_\alpha^*\).

\smallskip

We finally turn to the critical-tail regime, which is not covered by
the preceding theorem. More precisely, we assume that the initial datum
has two-sided critical decay in the sense that
\begin{equation}\label{eq:initial-critical-decay-lower}
0<
\lim_{R\to+\infty}
\operatorname*{ess\,inf}_{|x|\ge R}
(1+|x|)^N u_0(x)
\le
\lim_{R\to+\infty}
\operatorname*{ess\,sup}_{|x|\ge R}
(1+|x|)^N u_0(x)
<+\infty.
\end{equation}

Although the corresponding linear terminal time is still \(N/2\), the
borderline decay of \(u_0\) produces the stronger terminal behavior, see \eqref{closed-ineq-ball-critical-tail}. Consequently, the critical nonlinear
growth is no longer quadratic. The appropriate condition is the square-root weighted Osgood tail condition at infinity:
\begin{equation}\label{eq:critical-tail-osgood-condition}
\liminf_{\rho\to+\infty}
\rho^{1/2}
\int_\rho^\infty\frac{d\sigma}{f(\sigma)}
=0.
\end{equation}
For the model nonlinearity \(f(s)=s^p\), condition
\eqref{eq:critical-tail-osgood-condition} holds precisely when
\(p>3/2\).

The next theorem gives a complete dichotomy in the critical-tail regime.
If the square-root weighted Osgood tail condition holds, then every
nontrivial solution blows up strictly before the linear terminal time.
If this condition fails, then small initial amplitudes attain the full
linear lifespan, whereas sufficiently large amplitudes lead to premature
blow-up, giving rise to a sharp threshold phenomenon with respect to
\(\mu\).
 
\begin{theorem}[Blow-up dichotomy in the critical regime]
\label{thm:critical-tail-dichotomy}
Assume that \((\mathcal U_\alpha)\) and \((\mathcal F)\) hold, and that
\eqref{eq:initial-critical-decay-lower} is satisfied. Then the following
assertions hold.
\begin{enumerate}
\item[\textup{(a)}]
If \eqref{eq:critical-tail-osgood-condition} holds, then $0<T_{\mu,f}<T_{\mu,0}=\frac N2$ for every $\mu>0.$

\item[\textup{(b)}]
Suppose that \eqref{eq:critical-tail-osgood-condition} fails and that
\eqref{eq:osgood-tail-comparability} holds. Then there exists a critical
amplitude \(\mu_{\mathrm{crit}}^*\in(0,\infty)\) such that
\[ 0 < T_{\mu,f} =T_{\mu,0}= \frac{N}{2} \quad \text{for } 0 < \mu < \mu_{\mathrm{crit}}^*, \quad \text{and} \quad 0 < T_{\mu,f} < \frac{N}{2} \quad \text{for } \mu > \mu_{\mathrm{crit}}^*, \] with $T_{\mu,f} \to 0$ as $\mu \to \infty$.
\end{enumerate}
\end{theorem}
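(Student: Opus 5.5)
The plan follows the noncritical case (Theorem~\ref{thm:noncritical-tail-dichotomy}). The only new ingredient is the sharper terminal growth of the linear profile under \eqref{eq:initial-critical-decay-lower}. First I would quantify that growth. For $|x|\le 1$ and $t$ close to $N/2$, split the convolution $S_{\ln}(t)u_0(x)=\mathcal P_0(t)\int|x-y|^{2t-N}u_0(y)\,dy$ into $|y|\le R$ and $|y|\ge R$. On the far region, $u_0(y)\ge c|y|^{-N}$, so the integral is bounded below by
\[
c\int_R^\infty r^{2t-N-1}\,dr=\frac{c\,R^{2t-N}}{N-2t}.
\]
However, $\mathcal P_0(t)=\pi^{-N/2}4^{-t}\Gamma(\tfrac{N-2t}{2})/\Gamma(t)\sim C/(N-2t)$ as $t\to N/2$. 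Hence the terminal growth is
\[
S_{\ln}(t)u_0(x)\asymp (N/2-t)^{-2}\quad\text{on compact sets.}
\]
This is presumably the content of \eqref{closed-ineq-ball-critical-tail}. By contrast, for $\alpha>N$ the tail integral converges and one gets only $(N/2-t)^{-1}$. The upper bound uses the ess-sup in \eqref{eq:initial-critical-decay-lower} together with $u_0\in L^\infty_\alpha$: for $|x|$ large the same computation gives $S_{\ln}(t)u_0(x)\lesssim (N/2-t)^{-2}(1+|x|)^{-(N-2t)}$-type profiles, which I would record as a weighted bound for use in (b).

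For (a), positivity $T_{\mu,f}>0$ and $T_{\mu,0}=N/2$ come from Theorem~\ref{thm:local-wellposedness} and Definition~\ref{def:linear-lifespan} (with $\alpha=N$). Suppose, for contradiction, that $T_{\mu,f}=N/2$. Fix the unit ball $B$ and use $u\ge \mu S_{\ln}(t)u_0$ together with the Duhamel formula restricted to $B$. Since $\mathcal P_{\ln}(t-s,x-y)\ge \mathcal P_0(t-s)2^{2(t-s)-N}$ for $x,y\in B$, the quantity $m(t)=\inf_B u(t,\cdot)$ (or a $B$-average) satisfies
\[
m(t)\ge \frac{c\mu}{(N/2-t)^2}+\kappa\int_{t_0}^t f(m(s))\,ds ,
\]
with $\kappa>0$ uniform for $t-s$ bounded away from $0$ and $N/2$; a Jensen-free pointwise version works because $f$ is nondecreasing. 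Compare with the ODE $v'=\kappa f(v)$ started at time $t_1$ from level $\rho=c\mu (N/2-t_1)^{-2}$. Its blow-up time is $t_1+\kappa^{-1}\int_\rho^\infty d\sigma/f$, and $N/2-t_1\asymp \rho^{-1/2}$. Thus blow-up occurs before $N/2$ as soon as $\rho^{1/2}\int_\rho^\infty d\sigma/f<c'$, which happens along a sequence $\rho\to\infty$ by \eqref{eq:critical-tail-osgood-condition}. This contradicts finiteness, so $T_{\mu,f}<N/2$. The $\mu$-independence is automatic since the $\liminf$ is zero.

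For (b), large-$\mu$ premature blow-up and $T_{\mu,f}\to0$ follow from Theorem~\ref{thm:local-wellposedness}(2). This needs \eqref{eq:osgood-condition-large}, which follows from the failure of \eqref{eq:critical-tail-osgood-condition}? No: failure gives $\int_\rho^\infty d\sigma/f\ge c\rho^{-1/2}$, a lower bound, so I would instead derive Osgood finiteness from $(\mathcal F_\infty)$ plus superlinearity, or simply take $\int^\infty d\sigma/f<\infty$ as implicit in $(\mathcal F_\infty)$ (the quantity there is finite only if the integral is). Combining $(\mathcal F_\infty)$ with failure gives $f(\rho)\le C\rho\big/\int_\rho^\infty\le C\rho^{3/2}$ for large $\rho$; together with local Lipschitz, $f(s)\le C(s+s^{3/2})$. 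Monotonicity of $\mu\mapsto T_{\mu,f}$ follows by comparison, using uniqueness from Lemma~\ref{lem:mild-solution-uniqueness} and the monotone iteration. Then set $\mu^*_{\rm crit}=\sup\{\mu: T_{\mu,f}=N/2\}$; what remains is to show it is positive.

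The main obstacle is the small-data global supersolution on $(0,N/2)$. I would try
\[
\bar u(t,x)=2\mu\, S_{\ln}(t)u_0(x).
\]
It suffices to show
\[
\int_0^t S_{\ln}(t-s)\bigl[f(2\mu S_{\ln}(s)u_0)\bigr]\,ds\le \mu S_{\ln}(t)u_0 .
\]
With $f(s)\le C(s+s^{3/2})$, the linear part contributes $C t\,\mu S_{\ln}(t)u_0$, which is too large near $N/2$ unless absorbed. So I would instead use $\bar u=2\mu e^{Ct}S_{\ln}(t)u_0$, noting that $e^{Ct}$ is bounded on $(0,N/2)$. The superlinear part requires the weighted convolution estimate
\[
\int_0^t S_{\ln}(t-s)\bigl[(S_{\ln}(s)u_0)^{3/2}\bigr]ds\le K\,S_{\ln}(t)u_0 \quad\text{uniformly for }t<N/2 .
\]
Heuristically, $(S_{\ln}(s)u_0)^{3/2}\lesssim (N/2-s)^{-3}(1+|y|)^{-3(N-2s)/2}$. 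Applying $S_{\ln}(t-s)$ costs a factor of order $(N/2-t)^{-1}$ relative to the profile. The time integral $\int^t (N/2-s)^{-3/2}\cdots$ then closes exactly at the exponent $3/2$, because $(N/2-s)^{-2\cdot 3/2}$ after weighting matches $(N/2-t)^{-2}$ through a $\int (N/2-s)^{-3/2+\dots}$ balance. This bookkeeping, done separately for $|x|\lesssim$ and $|x|\gg$ the transition scale, is where I expect the real work. Once it holds, choosing $\mu$ small gives $2C\mu^{1/2}K\le1$, hence $T_{\mu,f}=N/2$ and $\mu^*_{\rm crit}>0$.
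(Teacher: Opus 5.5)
Your part (a) follows the paper: a lower inequality for $m(t)=\inf_B u(t,\cdot)$, then comparison with $v'=\kappa f(v)$ started at level $\rho_n$ on a window of length $\varepsilon_n\sim\rho_n^{-1/2}$. However, the Duhamel lower bound you propose for it is not strong enough. From $\mathcal P_{\ln}(\tau,x-y)\ge\mathcal P_0(\tau)2^{2\tau-N}$ on $B\times B$ you only get $\int_B\mathcal P_{\ln}(\tau,x-y)\,dy\ge|B|2^{2\tau-N}\mathcal P_0(\tau)$. By \eqref{eq:zero} this is of order $\tau$ as $\tau\downarrow0$, and as you note yourself, $\kappa$ is then uniform only for $t-s$ away from $0$. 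Since $\varepsilon_n\to0$, every $t-s$ in the comparison is small. With kernel mass of order $t-s$ you only get $m(t)\ge\frac{c\mu}{(N-2t)^2}+c\int_{t_0}^t(t-s)f(m(s))\,ds$, which does not reduce to $v'=\kappa f(v)$. The missing point is that the singularity of the kernel at $y=x$ compensates the vanishing of $\mathcal P_0$. At each $x\in\overline B$ the ball contains a cone with vertex $x$ of fixed height and aperture, so $\int_B|x-y|^{2\tau-N}\,dy\gtrsim\tau^{-1}$, and the kernel mass is bounded below uniformly for $\tau\in(0,\delta]$. This is Step~2 of Lemma~\ref{prop:lowbound}; invoking \eqref{closed-ineq-ball-critical-tail} directly closes (a).

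In (b), your supersolution $\bar u=2\mu e^{\rho_0t}S_{\ln}(t)u_0$ differs from the paper's envelope $A\Phi_{\mathrm{crit}}$ and can be made to work. But the estimate
\[
\int_0^tS_{\ln}(t-s)\bigl[(S_{\ln}(s)u_0)^{3/2}\bigr](x)\,ds\le K\,S_{\ln}(t)u_0(x),\qquad 0<t<\tfrac N2,
\]
on which the whole small-data claim rests, is only asserted. Your heuristic for it is off in two places, and one ingredient is missing.

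First, with $b=N-2s$ and $\mathcal R_y=1+|y|$, the linear flow is not bounded by $b^{-2}\mathcal R_y^{-b}$. The $|y|^{-N}$ tail produces a genuine logarithm, $S_{\ln}(s)u_0(y)\lesssim b^{-2}(1+b\log\mathcal R_y)\mathcal R_y^{-b}$ (Lemma~\ref{lem:linear-profile-critical-tail}). The paper absorbs it in the $3/2$-power by giving up decay: $(1+b\log\mathcal R_y)^{3/2}\mathcal R_y^{-3b/2}\le C\mathcal R_y^{-5b/4}$.

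Second, applying $S_{\ln}(t-s)$ does not cost a factor $(N/2-t)^{-1}$; with that cost the power count gives $(N/2-t)^{-3}$ and nothing closes. Set $a=N-2t$. By Lemma~\ref{lem:weighted-convolution} with $p=5/4$ and $\mathcal P_0(\frac{b-a}2)\lesssim\frac{b-a}{N-b+a}$, the spatial factor is only $\lesssim(N-b+a)^{-1}(1+|x|)^{-a}$. The bound $Ca^{-2}(1+|x|)^{-a}$ then comes from $\int_a^Nb^{-3}(N-b+a)^{-1}\,db\lesssim a^{-2}$; this is Lemma~\ref{lem:critical-three-half-profile-estimate}.

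Third, because your right-hand side is $S_{\ln}(t)u_0$ rather than an explicit envelope, you also need a matching lower bound. For $t\in[t_0,N/2)$, integrating the essential infimum in \eqref{eq:initial-critical-decay-lower} over $|y|\ge2|x|+R$ gives $S_{\ln}(t)u_0(x)\ge c(N-2t)^{-2}(1+|x|)^{-(N-2t)}$. No such uniform bound holds as $t\downarrow0$ wherever $u_0$ vanishes. So the range $s\in(0,t_0)$ must be handled separately, using $(S_{\ln}(s)u_0)^{3/2}\le M_{t_0}^{1/2}S_{\ln}(s)u_0$ with $M_{t_0}:=\sup_{s<t_0}\|S_{\ln}(s)u_0\|_{L^\infty}$, together with $S_{\ln}(t-s)S_{\ln}(s)u_0=S_{\ln}(t)u_0$.

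With these three points the estimate holds, and $\bar u$ is a supersolution once $2^{3/2}\rho_0e^{3\rho_0N/4}K\mu^{1/2}\le1$.

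As for what each route buys: your $\bar u$ is a bounded multiple of the linear flow, so $\bar u\in\mathcal X_{\tau,u_0}$ for every $\tau<N/2$ automatically. Lemma~\ref{lem:existence-below-supersolution} then gives the mild solution on $(0,N/2)$ at once, with the uniform bound $\|u_\mu\|_{S_{\ln}(\cdot)u_0,\tau}\le2\mu e^{\rho_0N/2}$. The paper's explicit envelope keeps the $3/2$-power estimate a pure computation, with no lower bound on $S_{\ln}(t)u_0$ needed. In exchange it requires a monotone iteration for the auxiliary equation, using the $e^{\rho_0t}$ device to absorb the linear part of $f$. It also needs an a posteriori check that the solution lies in $\mathcal X_{\tau,u_0}$, which again compares the envelope with $S_{\ln}(t)u_0$ on compact time intervals.
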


\smallskip
Proposition~\ref{prop:linear-terminal-behavior} describes the
terminal-time behavior of the linear solution. We now turn to
nonnegative mild solutions of the nonlinear problem, establishing a
continuation criterion and deriving quantitative estimates for their
behavior near the maximal existence time.

\begin{theorem}\label{thm:continuation-terminal-estimates}
Assume that $(\mathcal{U}_\alpha)$ and $(\mathcal{F})$ hold, let $\mu > 0$, and let $u_\mu$ be the unique nonnegative mild solution on $(0, T_{\mu,f})$. For fixed $x_0 \in \mathbb{R}^N$ and $r > 0$, define $m_r(t) := \inf_{x \in B_{2r}(x_0)} u_\mu(t,x)$ for $0 < t < T_{\mu,f}$. Then the following assertions hold:
\begin{enumerate}
    \item If $T_{\mu,f} < T_{\mu,0}$, then blow-up in the natural profile norm is equivalent to
blow-up in the uniform norm as \(t\uparrow T_{\mu,f}\). Moreover,
    \begin{equation}\label{eq:profile-blowup-alternative}
   \lim_{\tau\uparrow T_{\mu,f}}
\|u_\mu\|_{S_{\ln}(\cdot)u_0,\tau}
=
\lim_{\tau\uparrow T_{\mu,f}}
\|u_\mu\|_{L^\infty((0,\tau)\times\mathbb R^N)}
=
\infty.
    \end{equation}

    \item The following terminal lower bounds hold for $t \in (T_{\mu,f} - \tau_1, T_{\mu,f})$ with some constants $c_1 > 0$ and $\tau_1 \in (0, T_{\mu,f})$:
    \begin{enumerate}
        \item[(a)] If $T_{\mu,f} = N/2$, then $m_r(t) \ge c_1 \mu / (T_{\mu,f} - t)$.
        \item[(b)] If \eqref{eq:initial-slow-decay-lower} holds for some $\alpha \in (0, N)$ and $T_{\mu,f} = \alpha/2$, then $m_r(t) \ge c_1 \mu / (T_{\mu,f} - t)$.
        \item[(c)] If \eqref{eq:initial-critical-decay-lower} holds and $T_{\mu,f} = N/2$, then $m_r(t) \ge c_1 \mu / (T_{\mu,f} - t)^2$.
    \end{enumerate}

    \item If $f$ further satisfies \eqref{eq:osgood-condition-large}, then there exist constants $c_2, c_3 > 0$ and $\tau_2 \in (0, T_{\mu,f})$ such that
    \begin{equation}\label{eq:general-terminal-osgood-upper-bound}
    m_r(t) \le c_2 \Psi_f\bigl(c_3(T_{\mu,f}-t)\bigr) \quad \text{for every } t \in (T_{\mu,f}-\tau_2, T_{\mu,f}),
    \end{equation}
    where $\Phi_f(\rho) := \int_\rho^\infty \frac{d\sigma}{f(\sigma)}$ for $\rho > 0$, and $\Psi_f(s) := \sup\{\rho \ge 1 : \Phi_f(\rho) \ge s\}$ for $s > 0$.
\end{enumerate}
\end{theorem}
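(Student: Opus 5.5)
We treat the three assertions separately, using throughout the mild formulation \eqref{eq:mild-solution}, the positivity of the kernel, and the propagation estimate $S_{\ln}(t-s)|v(s,\cdot)|\le \|v\|_{S_{\ln}(\cdot)u_0,\tau}S_{\ln}(t)u_0$.

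\emph{Assertion 1.} Assume $T_{\mu,f}<T_{\mu,0}$. By Lemma~\ref{lem:weighted-linear-flow-bound} and the definition of $T_{\mu,0}$, on any $(0,\tau')$ with $T_{\mu,f}<\tau'<T_{\mu,0}$ the profile satisfies $0<c_K\le S_{\ln}(t)u_0\le M$ on compact sets and $S_{\ln}(t)u_0\le M$ globally. This gives $\|u\|_{L^\infty}\le M\|u\|_{S_{\ln}u_0,\tau}$, so uniform blow-up implies profile blow-up.

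For the converse we argue by contradiction. Suppose $\sup_{\tau<T_{\mu,f}}\|u_\mu\|_{L^\infty((0,\tau)\times\R^N)}=K<\infty$. Using the local Lipschitz bound $f(u)\le L_K u$ on $[0,K]$ and the propagation estimate, we get
\[
\phi(\tau):=\|u\|_{S_{\ln}u_0,\tau}\le \mu+L_K\int_0^\tau \phi(s)\,ds .
\]
Indeed, $S_{\ln}(t-s)f(u(s))\le L_K\phi(s)S_{\ln}(t)u_0$. Gronwall then bounds $\phi$ up to $T_{\mu,f}$. With both norms bounded, we restart the contraction argument of Theorem~\ref{thm:local-wellposedness} from a time close to $T_{\mu,f}$. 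The contraction time depends only on the profile-norm bound and on $L$ on bounded sets, because the Duhamel term is controlled by $(t-s)L\,S_{\ln}(t)u_0$. This extends the solution past $T_{\mu,f}$, contradicting maximality, so the profile norm also blows up. Monotonicity in $\tau$ gives existence of the limits.

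\emph{Assertion 2.} Since $f\ge0$, we have $u_\mu\ge\mu S_{\ln}(t)u_0$, so the claim reduces to lower bounds for the linear profile on $B_{2r}(x_0)$ near $T_{\mu,f}$. We will invoke Lemma~\ref{prop:lowbound} (and its critical-tail version \eqref{closed-ineq-ball-critical-tail}). Writing $\mathcal P_{\ln}(t,x-y)\ge c|y|^{2t-N}$ for $|y|$ large and $x\in B_{2r}(x_0)$, the tail integral in case (a) behaves like $\int_R^\infty \rho^{2t-N}u_0\rho^{N-1}d\rho$. In case (b), using $u_0\gtrsim\rho^{-\alpha}$, this is $\gtrsim\int_R^\infty\rho^{2t-\alpha-1}d\rho\asymp(\alpha-2t)^{-1}=(2(T-t))^{-1}$. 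In case (c), $u_0\gtrsim \rho^{-N}$ and the kernel prefactor $\mathcal P_0(t)\sim\Gamma(\tfrac{N-2t}{2})^{\cdot}\sim (N/2-t)^{-1}$ blows up. This extra factor, together with $\int\rho^{2t-N-1}\asymp (N-2t)^{-1}$, yields $(T-t)^{-2}$. In case (a) with $T=N/2$, the factor $\Gamma(\frac{N-2t}{2})\asymp (N/2-t)^{-1}$ alone gives $(T-t)^{-1}$ from the local mass of $u_0$ on a set of positive measure. We choose $\tau_1$ so that $t$ stays near the terminal time, where these asymptotics are uniform.

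\emph{Assertion 3.} This is the main obstacle, and the plan is a reverse ODE comparison. For $x\in B_{2r}(x_0)$ and $t<t'$, restart the mild formula at time $t$. The semigroup property of the kernel on the range of existence (to be justified from $\mathcal P_{\ln}(t)*\mathcal P_{\ln}(s)=\mathcal P_{\ln}(t+s)$ via Fourier symbols $|\xi|^{-2t}$) gives
\[
u(t',x)\ge \int_{B_{2r}(x_0)}\mathcal P_{\ln}(t'-t,x-y)u(t,y)\,dy+\int_t^{t'}\!\!\int_{B_{2r}}\mathcal P_{\ln}(t'-s,x-y)f(u(s,y))\,dy\,ds .
\]
On the ball $|x-y|\le4r$ the kernel mass $\int_{B_{2r}}\mathcal P_{\ln}(h,x-y)dy\ge\kappa>0$ uniformly for $h$ in a compact subrange, say $h\in[h_0,T_{\mu,f}]$. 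The difficulty is small $h$, where the mass $\to1$ only asymptotically; we pick $\kappa$ from the limit behaviour. Hence $m_r(t')\ge \kappa m_r(t)+\kappa\int_t^{t'}f(m_r(s))ds$. A comparison with $w'=\kappa f(w)$, $w(t)=\kappa m_r(t)$, shows that $u$ must blow up before $t+\kappa^{-1}\Phi_f(\kappa m_r(t))$. Since $u$ exists up to $T_{\mu,f}$, we get $\Phi_f(\kappa m_r(t))\ge\kappa(T_{\mu,f}-t)$, and inverting via $\Psi_f$ gives $m_r(t)\le\kappa^{-1}\Psi_f(\kappa(T_{\mu,f}-t))$ once $m_r\ge1$. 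If $m_r$ stays below $1$ the bound is trivial after enlarging $c_2$, since $\Psi_f\ge1$. The delicate point, to be checked carefully, is the uniform lower bound of the local kernel mass and the validity of the semigroup/restart identity for the nonintegrable kernel within $\mathcal X_{\tau,u_0}$.
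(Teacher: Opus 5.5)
Your proposal is correct and follows the paper's proof essentially step for step: the bound $\|u_\mu\|_{L^\infty}\le M_*\|u_\mu\|_{S_{\ln}(\cdot)u_0,\tau}$ together with Gronwall and a restarted contraction in the profile-weighted space for assertion 1, the linear lower bounds of Lemma~\ref{prop:lowbound} with the Duhamel term discarded for assertion 2, and the localized Osgood trigger (Lemma~\ref{lem:local-osgood-trigger}) followed by inversion through $\Psi_f$ for assertion 3. The points you leave loose are settled with the paper's own tools: the kernel-mass bound must hold uniformly for $x\in B_{2r}(x_0)$ as $h\to0$, and since the pointwise limit $1$ is not uniform near $\partial B_{2r}(x_0)$ this needs the cone construction from Step~2 of Lemma~\ref{prop:lowbound}; moreover, $\Psi_f(s)\ge1$ holds only for $s\le\Phi_f(1)$, which is exactly what fixes $\tau_2$, and the case split should be on $\kappa m_r(t)\ge1$ rather than $m_r(t)\ge1$.
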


 The proof combines three different mechanisms. The first claim is obtained by restarting the equation in the natural profile
space, which is essential because the logarithmic heat kernel is not
integrable at spatial infinity and an ordinary \(L^\infty\)-based
argument is therefore insufficient. The terminal lower bounds follow
from the precise asymptotic behavior of the linear evolution associated
with the corresponding decay of \(u_0\). Finally, the upper bound
is derived from a localized Osgood comparison argument: a sufficiently
large lower bound of \(u_\mu\) on a ball forces blow-up within a time
controlled by \(\Phi_f\), and inversion of this estimate yields the
bound.

Combining the comparison principle in Lemma \ref{lem:lifespan-monotonicity-mu}, the uniform profile bound, and a
monotone passage to the limit as \(\mu\uparrow\mu^*\), the first
assertion of Theorem~\ref{thm:continuation-terminal-estimates} yields
the following consequence.

\begin{corollary}\label{cor:attainment-critical-amplitudes}
Let $\mu^*$ denote the critical parameter $\mu_\infty^*$, $\mu_\alpha^*$, or $\mu_{\mathrm{crit}}^*$ defined in Theorems~\ref{thm:noncritical-tail-dichotomy}\textup{(i)}, \ref{thm:noncritical-tail-dichotomy}\textup{(ii)}, and \ref{thm:critical-tail-dichotomy}, respectively. Let $T_0$ denote the corresponding linear lifespan ($N/2$ or $\alpha/2$). If
\[
\sup_{0 < \mu < \mu^*} \|u_\mu\|_{S_{\ln}(\cdot)u_0, \tau} < \infty \quad \text{for every } \tau \in (0, T_0),
\]
then the solution at the critical parameter attains the full linear lifespan, i.e., $T_{\mu^*, f} = T_0$.
\end{corollary}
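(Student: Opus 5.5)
The plan is to use the comparison principle of Lemma~\ref{lem:lifespan-monotonicity-mu}: for $0<\mu<\mu'$ it gives $u_\mu\le u_{\mu'}$ on the common existence interval. Hence, for each fixed $\tau\in(0,T_0)$ and $\mu<\mu^*$, the family $\{u_\mu\}$ is nondecreasing in $\mu$ on $(0,\tau)\times\mathbb R^N$. All these solutions exist on $(0,\tau)$, since $T_{\mu,f}=T_0$ for $\mu<\mu^*$.

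Define $u^*(t,x):=\lim_{\mu\uparrow\mu^*}u_\mu(t,x)$ as a monotone limit on $(0,T_0)\times\mathbb R^N$. By hypothesis,
\[
K_\tau:=\sup_{0<\mu<\mu^*}\|u_\mu\|_{S_{\ln}(\cdot)u_0,\tau}<\infty,
\]
so $0\le u^*(t,x)\le K_\tau S_{\ln}(t)u_0(x)$ on $(0,\tau)\times\mathbb R^N$. In particular $u^*$ is finite, measurable, and $u^*|_{(0,\tau)\times\mathbb R^N}\in\mathcal X_{\tau,u_0}$ for every $\tau<T_0$.

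Next I pass to the limit in the Duhamel identity \eqref{eq:mild-solution} at each fixed $(t,x)$ with $t<\tau$. The linear term $\mu S_{\ln}(t)u_0(x)$ converges to $\mu^*S_{\ln}(t)u_0(x)$, which is finite because $t<T_0\le T_{\mu,0}$. For the nonlinear term, $f$ is continuous and nondecreasing, so $f(u_\mu(s,y))\uparrow f(u^*(s,y))$. The monotone convergence theorem, applied to the nonnegative kernel $\mathcal P_{\ln}(t-s,x-y)$ on $(0,t)\times\mathbb R^N$, then gives convergence of the Duhamel integrals. The limit is finite, because $u^*\le K_\tau S_{\ln}(t)u_0(x)<\infty$ and every term on the left is bounded by it. Thus $u^*$ is a nonnegative mild solution for amplitude $\mu^*$ on $(0,\tau)$ for every $\tau<T_0$, hence on $(0,T_0)$ in the sense of Definition~\ref{def:mild-solution}. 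By the uniqueness in Theorem~\ref{thm:local-wellposedness}, $u^*=u_{\mu^*}$ and $T_{\mu^*,f}\ge T_0$. Since $T_{\mu^*,f}\le T_{\mu^*,0}=T_0$, equality follows.

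The main point requiring care is matching the definition of $T_{\mu,f}$ and the uniqueness statement: one needs that a mild solution existing on $(0,T_0)$ really gives $T_{\mu^*,f}=T_0$ rather than merely a lower bound on a shorter interval. Here the supremum definition handles this directly, and the finiteness of the limiting Duhamel integral is guaranteed by the uniform profile bound.

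Alternatively, one could argue by contradiction via Theorem~\ref{thm:continuation-terminal-estimates}(1). If $T_{\mu^*,f}<T_0$, then $\|u_{\mu^*}\|_{S_{\ln}(\cdot)u_0,\tau}\to\infty$ as $\tau\uparrow T_{\mu^*,f}$. This contradicts $u_{\mu^*}=u^*\le K_\tau S_{\ln}(t)u_0$ with $\tau$ chosen in $(T_{\mu^*,f},T_0)$. I would include this as the final consistency check.
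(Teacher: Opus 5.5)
Your proposal is correct and uses the same ingredients the paper cites: the comparison $u_\mu\le u_{\mu'}$ from Lemma~\ref{lem:lifespan-monotonicity-mu}, the uniform profile bound, and monotone passage to the limit in the Duhamel formula. The paper closes via the first assertion of Theorem~\ref{thm:continuation-terminal-estimates}, which is exactly your alternative argument; your main route shows that step is unnecessary, since the monotone limit is already a mild solution on all of $(0,T_0)$ and the supremum definition of $T_{\mu^*,f}$ finishes the proof.
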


\begin{remark}

To provide a comprehensive overview of our main results, we present the following illustrative examples involving initial data and specific  piecewise-power nonlinearities: 
$$u_0(x)=(1+|x|)^{-\alpha}$$
and
\[f(s)
=
\begin{cases}
s^{p_1},
& 0\le s\le1,\\[1mm]
s^{p_2},
& s>1,
\end{cases}
\qquad\quad  p_1,\, p_2>0. \]

The exponent \(p_1\) determines local solvability, whereas \(p_2\)
determines whether the nonlinear term shortens the linear lifespan.

(i) If \(0<p_1<1\), then $T_{\mu,f}=0$ for any $\mu>0,\ \alpha>0,\ p_2>0. $

(ii) Suppose now that \(p_1\ge1\), 
the resulting classification is stated below.
\begin{center}
\small
\renewcommand{\arraystretch}{1.65}
\setlength{\tabcolsep}{6pt}
\begin{tabularx}{\textwidth}{
    >{\centering\arraybackslash}m{2.1cm}
    |Y|Y|Y
}
\hline
\(\boldsymbol{p_2}\)
&
\(
\begin{gathered}
0<\alpha<N
\end{gathered}
\)
&
\(\displaystyle \alpha=N\)
&
\(\begin{gathered}\alpha>N\end{gathered}\)
\\
\hline

\(0<p_2\le1\)
&
\(\displaystyle T_{\mu,f}=\alpha/2\)
&
\(\displaystyle T_{\mu,f}=N/2\)
&
\(\displaystyle T_{\mu,f}=N/2\)
\\
\hline

\(\displaystyle 1<p_2\le 3/2\)
&
threshold at \(\mu_\alpha^*\)
&
threshold at \(\mu_{\mathrm{crit}}^*\)
&
threshold at \(\mu_\infty^*\)
\\
\hline

\(\displaystyle 3/2<p_2\le2\)
&
threshold at \(\mu_\alpha^*\)
&
\(\displaystyle 0<T_{\mu,f}<N/2\)
&
threshold at \(\mu_\infty^*\)
\\
\hline

\(p_2>2\)
&
\(\displaystyle 0<T_{\mu,f}<\alpha/2\)
&
\(\displaystyle 0<T_{\mu,f}<N/2\)
&
\(\displaystyle 0<T_{\mu,f}<N/2\)
\\
\hline
\end{tabularx}
\end{center}
Here,  the first and last row  both hold for every
\(\mu>0\). The phrase ``threshold at \(\mu_*\)'' means that, with
\(T_{\mathrm{\mu,0}}\) denoting the corresponding linear terminal time,
\[
T_{\mu,f}
\begin{cases}
=T_{\mathrm{\mu,0}}
& \text{for }0<\mu<\mu_*;\\[1mm]
\in(0,T_{\mathrm{\mu,0}})
& \text{for }\mu>\mu_*.
\end{cases}
\]

If \(p_2>1\), then, for every \(\rho\ge1\), $\Phi_f(\rho)
=
\int_\rho^\infty\frac{d\sigma}{\sigma^{p_2}}
=
\frac{\rho^{1-p_2}}{p_2-1},$
and hence
\[
\Psi_f(s)
=
\bigl((p_2-1)s\bigr)^{-\frac1{p_2-1}}
\]
for all sufficiently small \(s>0\). Therefore, by Theorem \ref{thm:continuation-terminal-estimates}, there exist constants \(C>0\) and
\(\tau_2\in(0,T_{\mu,f})\) such that
\[
m_r(t)
\le
C\bigl(T_{\mu,f}-t\bigr)^{-\frac1{p_2-1}}
\qquad
\text{for every }
t\in(T_{\mu,f}-\tau_2,T_{\mu,f}).
\]
Combining this upper bound with the terminal lower bounds in Theorem \ref{thm:continuation-terminal-estimates}, we obtain sharp blow-up rates across the three distinct regimes:
\begin{enumerate}
    \item[(a)] \textbf{Full lifespan ($T_{\mu,f} = N/2$):} Matching the lower bound $(T_{\mu,f}-t)^{-1}$ with the upper bound $(T_{\mu,f}-t)^{-\frac{1}{p_2-1}}$ yields the sharp exponent $p_2 = 2$.
    \item[(b)] \textbf{Slow-decay regime ($T_{\mu,f} = \alpha/2$):} Similarly, matching the lower bound $(T_{\mu,f}-t)^{-1}$ yields the sharp exponent $p_2 = 2$.
    \item[(c)] \textbf{Critical-tail regime ($T_{\mu,f} = N/2$):} Matching the quadratic lower bound $(T_{\mu,f}-t)^{-2}$ yields the sharp exponent $p_2 = 3/2$.
\end{enumerate}
\end{remark}

\smallskip
We conclude this introduction by briefly reviewing related work on the
classical Fujita problem and the main analytical difficulties of the
logarithmic setting. The local and global well-posedness theory in
Lebesgue spaces was developed by Weissler
\cite{Weissler80,Weissler81}, while singular initial data were studied
by Brezis and Cazenave \cite{Brezis-Cazenave}. The effect of the
spatial decay of the initial datum on global existence, large-time
behavior, and lifespan was investigated by Lee and Ni \cite{Lee-Ni}.
For general nondecreasing nonlinearities, Osgood-type criteria for local
existence and nonexistence were obtained in
\cite{Laister-Robinson-Sierzega,
Laister-Robinson-Sierzega-Vidal}, and supersolution methods were
developed in \cite{Robinson-Sierzega}. We also refer to
\cite{Levine,Quittner-Souplet} for broader accounts of critical
exponents, blow-up, and global existence in semilinear parabolic
problems. More recently, Laister and Sier\.{z}\k{e}ga
\cite{Laister-Sierzega} established a unified blow-up dichotomy for the
classical and fractional settings using scalar ODE comparison for
blow-up and heat-semigroup supersolutions for global existence.

The logarithmic setting presents several new difficulties. The positive
logarithmic heat flow exists only for \(0<t<N/2\), lacks the standard
global \(L^p\)-\(L^q\) estimates, and has a nonintegrable spatial tail.
Moreover, the linear evolution itself may blow up (see Lemma \ref{prop:lowbound}), with both its
lifespan and terminal growth depending on the decay of \(u_0\); hence
the critical condition on \(f\) is tail-dependent (see Theorem \ref{thm:noncritical-tail-dichotomy}, \ref{thm:critical-tail-dichotomy}). This requires a
profile-based local well-posedness theory, refined weighted estimates,
and more delicate supersolution (see Lemma \ref{lem:linear-profile-fast-decay}, \ref{lem:quadratic-profile-estimate}, \ref{lem:linear-profile-critical-tail}, \ref{lem:critical-three-half-profile-estimate}) and localized Osgood arguments.

We finally note a consequence of
Theorem~\ref{thm:continuation-terminal-estimates}. For the classical
semilinear heat equation with \(f(u)=u^p\), finite-time blow-up is called
type~I if
\[
\|u(t)\|_{L^\infty}
\le
C(T-t)^{-\frac1{p-1}},
\]
and type~II otherwise. Type~I blow-up and its self-similar structure
were studied in
\cite{GigaKohn1985,GigaKohn1987,MerleZaag1997,MerleZaag1998,
Quittner2021}, while type~II blow-up was analyzed in
\cite{MatanoMerle2004,MatanoMerle2009,Schweyer2012,
delPinoMussoWei2019,delPinoMussoWei2021,
CollotMerleRaphael2020,GuiNiWang2001}.

In the logarithmic setting, the linear flow may itself become singular,
and our estimates concern the local lower envelope \(m_r(t)\). Comparing
the corresponding linear lower bounds with
\[
m_r(t)\le C(T_{\mu,f}-t)^{-\frac1{p-1}}
\]
explains the dividing powers \(p=2\) in the noncritical regimes and
\(p=3/2\) in the critical-tail regime. A finer study of the associated
blow-up profiles is left for future work.
\smallskip

The remainder of the paper is organized as follows. Section~2 develops
the basic properties of the logarithmic heat flow and mild solutions,
together with the auxiliary estimates used later. Section~3 establishes
the general existence theory, including instantaneous nonexistence,
local well-posedness, continuation, and terminal-time estimates.
Section~4 treats the noncritical decay regimes and proves the lifespan
dichotomy between premature blow-up and threshold behavior. Section~5
addresses the critical-tail regime, where the corresponding dichotomy
is governed by the square-root weighted Osgood condition and the
critical power \(3/2\).

\setcounter{equation}{0}
\section{Preliminaries and Basic Properties}

\subsection{Linear lifespan, continuity, and uniqueness of mild solutions}
\label{subsec:linear-lifespan-continuity-uniqueness}

We first recall several elementary properties of the logarithmic diffusion kernel
which will be used in the sequel, these properties are taken from
\cite{CV24}. The logarithmic diffusion kernel is defined
formally by
\[
    \mathcal P_{\ln}(t,x)
    :=
    \int_{\R^N} |\xi|^{-2t} e^{i x\cdot \xi}\,d\xi .
\]
For \(0<t<N/2\) and \(x\neq0\), it admits the explicit representation
\[
    \mathcal P_{\ln}(t,x)
    =
    \mathcal P_0(t)|x|^{2t-N},\quad  \mathcal P_0(t)
    =
    \pi^{-\frac N2}4^{-t}
    \frac{\Gamma\left(\frac{N-2t}{2}\right)}{\Gamma(t)} .
\]

We shall use the following asymptotic behavior of the coefficient
\(\mathcal P_0(t)\). As \(t\to0^+\), since $ \Gamma(t)\sim \frac1t$ and $\Gamma\left(\frac{N-2t}{2}\right)\to \Gamma\left(\frac N2\right),$
we have
\begin{equation}\label{eq:zero}
     \lim_{t\to0^+}\frac{\mathcal P_0(t)}{t}
    =
    \pi^{-\frac N2}\Gamma\left(\frac N2\right).
\end{equation}

On the other hand, as \(t\to (N/2)^-\), we have $ \Gamma\left(\frac{N-2t}{2}\right)
    \sim
    \frac{2}{N-2t}.$
Consequently,
\begin{equation}\label{eq:n2}
     \lim_{t\to (N/2)^-}
    (N-2t)\mathcal P_0(t)
    =
    2^{2-N}\omega_N, \quad \omega_N:=\frac{1}{2\pi^{\frac N2}\Gamma\left(\frac N2\right)}
\end{equation}
Since $ |x|^{2t-N}\to 1$ as $t\to (N/2)^-$
for every fixed \(x\neq0\), it follows that
\[
    \lim_{t\to (N/2)^-}
    (N-2t)\mathcal P_{\ln}(t,x)
    =
    2^{2-N}\omega_N
    \quad{\rm for}\ \, x\neq0.
\]
Moreover, this convergence is uniform on every compact subset of
\(\R^N\setminus\{0\}\).

\smallskip

\begin{lemma}
\label{lem:linear-flow-continuity}
Suppose that \eqref{ass:u0} holds, and let \(T\in(0,N/2]\). If
\begin{equation}\label{eq:linear-flow-local-bound}
\sup_{0<t<\tau}
\|S_{\ln}(t)u_0\|_{L^\infty(\mathbb R^N)}
<\infty
\qquad
\text{for every }\tau\in(0,T),
\end{equation}
then the map $(t,x)\longmapsto S_{\ln}(t)u_0(x)$
is jointly continuous on \((0,T)\times\mathbb R^N\). 
\end{lemma}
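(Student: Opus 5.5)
The plan is to fix a compact window $[t_1,t_2]\subset(0,T)$ and a ball $\overline{B_R}$, and to prove continuity there by splitting the convolution into a near part and a far part. The near part is handled by local integrability of the kernel. The far part is handled by comparing it, through the explicit kernel, with $S_{\ln}$ at a slightly larger time, whose finiteness the hypothesis \eqref{eq:linear-flow-local-bound} provides.

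\textbf{Step 1: finiteness of a weighted moment.} Choose $t_3\in(t_2,T)$. By \eqref{eq:linear-flow-local-bound}, $S_{\ln}(t)u_0(0)<\infty$ for some, indeed almost every, $t$ close to $t_3$. This uses that $S_{\ln}(t)u_0\in L^\infty$ forces the integral to be finite for a.e. $x$, and a translation argument then moves from a generic $x$ to $0$. Consequently
\[
\int_{\mathbb R^N}|x_*-y|^{2t_3-N}u_0(y)\,dy<\infty
\]
for some point $x_*$. For $|y|$ large, $|x-y|^{2t-N}\asymp|y|^{2t-N}$ uniformly for $x\in B_R$, so this gives $\int_{|y|\ge 1}|y|^{2t_3-N}u_0(y)\,dy<\infty$.

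\textbf{Step 2: the far part.} Take $M\ge 2R+1$. For $|y|\ge M$, $x\in B_R$ and $t\in[t_1,t_2]$, the integrand satisfies
\[
\mathcal P_0(t)|x-y|^{2t-N}u_0(y)\le C|y|^{2t_3-N}u_0(y).
\]
Here I use $2t-N\le 2t_3-N$ together with $|x-y|\ge 1$, and the continuity of $\mathcal P_0$ on $[t_1,t_2]$. The dominating function is integrable by Step 1, and the integrand is continuous in $(t,x)$, so dominated convergence makes the far part continuous.

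\textbf{Step 3: the near part.} On $B_M$ I need $u_0\in L^1(B_M)$, or better $u_0\in L^\infty(B_M)$. Finiteness of $S_{\ln}(t)u_0(x)$ at all $x$ only yields $\int_{B_M}|x-y|^{2t-N}u_0(y)\,dy<\infty$, and this already gives $u_0\in L^1(B_M)$, since $|x-y|^{2t-N}\ge (2M)^{2t-N}$ there. I would then write
\[
\int_{B_M}\mathcal P_{\ln}(t,x-y)u_0(y)\,dy .
\]
Continuity in $x$ of a convolution of a locally integrable kernel with $u_0$ requires more than $u_0\in L^1$, because $|z|^{2t-N}$ is unbounded near $0$. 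The natural route is to split further into $|x-y|<\delta$ and $|x-y|\ge\delta$.
\begin{itemize}
\item For $|x-y|\ge\delta$, the kernel is bounded and jointly continuous, so dominated convergence with $u_0\in L^1(B_M)$ applies.
\item For $|x-y|<\delta$, I need the contribution to be uniformly small. This is the \textbf{main obstacle}. I would handle it by using the hypothesis at the earlier time $t_0=t_1/2$: the bound $\sup_x\int_{B_\delta(x)}|x-y|^{2t_0-N}u_0\,dy\le C$ combined with $|x-y|^{2t-N}\le\delta^{2(t-t_0)}|x-y|^{2t_0-N}$ for $t\ge t_1$ gives a bound $C\delta^{t_1}$, uniformly in $x$ and $t$.
\end{itemize}

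\textbf{Step 4: conclusion.} Combining the uniform smallness of the $|x-y|<\delta$ contribution with the continuity of the remaining pieces, a standard $\varepsilon/3$ argument gives joint continuity on $[t_1,t_2]\times B_R$, hence on $(0,T)\times\mathbb R^N$.
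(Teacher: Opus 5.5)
Your proof follows essentially the paper's route. The near-diagonal piece is made uniformly small by comparing with $S_{\ln}$ at an earlier time; your bound $|x-y|^{2t-N}\le\delta^{2(t-t_0)}|x-y|^{2t_0-N}$ is exactly the paper's $r^{2(a-s)}$ trick. The far tail is dominated by $|y|^{2\sigma-N}u_0(y)$ with $\sigma$ a later time, and the remaining piece is handled by dominated convergence using $u_0\in L^1_{\mathrm{loc}}$.

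\textbf{Gap in Step 3.} You use the bound $\sup_x\int_{B_\delta(x)}|x-y|^{2t_0-N}u_0(y)\,dy\le C$ at \emph{every} $x$. The hypothesis is an $L^\infty$ bound, i.e.\ an \emph{essential} supremum, and as you note yourself in Step 1 it only gives finiteness for a.e.\ $x$. Without an everywhere bound, your $\varepsilon/3$ argument gives neither finiteness nor continuity at the exceptional null set.

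\textbf{How the paper closes it.} At the start of its proof, the paper applies Fatou's lemma: $x\mapsto\int|x-y|^{2t_0-N}u_0(y)\,dy$ is lower semicontinuous. If it exceeded $C$ at one point, it would exceed $C$ on a neighbourhood, which contradicts the a.e.\ bound. So the essential bound holds pointwise for the integral representative, and your Step 3 then goes through.

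\textbf{Smaller points.}
\begin{itemize}
\item In Step 1, drop the ``almost every $t$'' and the ``translation to $0$''. The hypothesis already applies at $t_3$ itself. Finiteness at a generic point does not by itself control the local singularity at $0$. You only need finiteness at one point $x_*$ to get the tail moment $\int_{|y|\ge1}|y|^{2t_3-N}u_0(y)\,dy<\infty$.
\item With the sharp cutoff $\mathbf 1_{\{|x-y|\ge\delta\}}$, the integrand is not jointly continuous across the sphere $|x-y|=\delta$. Dominated convergence still applies because that sphere is a null set in $y$. The paper avoids the issue by using a continuous cutoff $\eta_r$.
\end{itemize}
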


\begin{proof}
For each fixed \(t\in(0,T)\), by Fatou's lemma, the natural integral representative $x\mapsto S_{\ln}(t)u_0(x)$
is lower semicontinuous. 
It follows that the essential \(L^\infty\)-bound in
\eqref{eq:linear-flow-local-bound} is valid pointwise for this
representative. 

Choose \(s,\sigma\) such that $0<s<a\le b<\sigma<T,$ and fix a compact cylinder $[a,b]\times K
\Subset
(0,T)\times\mathbb R^N.$ We first observe that \(u_0\in L^1_{\mathrm{loc}}(\mathbb R^N)\).
Indeed, for every \(R>0\),
\[
\begin{aligned}
\int_{B_R}u_0(y)\,dy
&\le
R^{N-2\sigma}
\int_{B_R}
|y|^{2\sigma-N}u_0(y)\,dy\le
\frac{R^{N-2\sigma}}{\mathcal P_0(\sigma)}
S_{\ln}(\sigma)u_0(0)
<\infty.
\end{aligned}
\]

We next control the singularity near the diagonal. Let \(0<r<1\).
For \(t\in[a,b]\) and \(x\in K\), we have
\[
|x-y|^{2t-N}
=
|x-y|^{2s-N}|x-y|^{2(t-s)}
\le
r^{2(a-s)}|x-y|^{2s-N}
\]
whenever \(|x-y|<r\). Since \(\mathcal P_0\) is bounded on
\([a,b]\), it follows that
\[
\begin{aligned}
&\mathcal P_0(t)
\int_{|x-y|<r}
|x-y|^{2t-N}u_0(y)\,dy\le
\frac{\sup_{\theta\in[a,b]}\mathcal P_0(\theta)}
{\mathcal P_0(s)}
r^{2(a-s)}
S_{\ln}(s)u_0(x).
\end{aligned}
\]
Therefore,
\begin{equation}\label{eq:linear-flow-small-diagonal}
\sup_{\substack{t\in[a,b]\\x\in K}}
\mathcal P_0(t)
\int_{|x-y|<r}
|x-y|^{2t-N}u_0(y)\,dy
\longrightarrow0\quad \text{as}\quad r\downarrow0.
\end{equation}

We also control the contribution from spatial infinity. Choose
\(R_0>0\) such that \(K\subset B_{R_0}\). If
\(R>2R_0+1\), \(x\in K\), and \(|y|>R\), then $|x-y|\ge\frac{|y|}{2}.$
Since \(t\le b<\sigma\) and \(|y|>1\), we obtain
\[
|x-y|^{2t-N}
\le
C|y|^{2t-N}
\le
C|y|^{2\sigma-N},
\]
where \(C>0\) depends only on \(N,a,b\). Consequently,
\begin{equation}\label{eq:linear-flow-large-tail}
    \begin{aligned}
&\sup_{\substack{t\in[a,b]\\x\in K}}
\mathcal P_0(t)
\int_{|y|>R}
|x-y|^{2t-N}u_0(y)\,dy\le
C
\int_{|y|>R}
|y|^{2\sigma-N}u_0(y)\,dy
\rightarrow0 \quad \text{as}\quad R\to\infty.
\end{aligned}
\end{equation}

 Let \(\eta_r\in C(\mathbb R^N),0\le \eta_r\le 1\) satisfy
\[
\eta_r(z)=0
\quad\text{if }|z|\le\frac r2,
\qquad
\eta_r(z)=1
\quad\text{if }|z|\ge r,
\]
and let \(\chi_R\in C_c(\mathbb R^N),0\le \chi_R\le 1\) satisfy
\[
\chi_R(y)=1
\quad\text{if }|y|\le R,
\qquad
\chi_R(y)=0
\quad\text{if }|y|\ge2R.
\]

Define
\[
F_{r,R}(t,x)
:=
\mathcal P_0(t)
\int_{\mathbb R^N}
\eta_r(x-y)\chi_R(y)
|x-y|^{2t-N}u_0(y)\,dy.
\]
For fixed \(r,R>0\), the integrand is jointly continuous in
\((t,x)\) for almost every \(y\). Moreover, on
\([a,b]\times K\), it is bounded by $C_{a,b,K,r,R}\,
u_0(y)\mathbf 1_{B_{2R}}(y),$
which is integrable because \(u_0\in L^1_{\mathrm{loc}}(\mathbb R^N)\).
The dominated convergence theorem therefore yields $F_{r,R}\in C([a,b]\times K).$

Furthermore, $0
\le
S_{\ln}(t)u_0(x)-F_{r,R}(t,x)$
is bounded from above by
\[
\begin{aligned}
&\mathcal P_0(t)
\int_{|x-y|<r}
|x-y|^{2t-N}u_0(y)\,dy+
\mathcal P_0(t)
\int_{|y|>R}
|x-y|^{2t-N}u_0(y)\,dy.
\end{aligned}
\]
By \eqref{eq:linear-flow-small-diagonal} and
\eqref{eq:linear-flow-large-tail}, the right-hand side converges to zero
uniformly on \([a,b]\times K\) as \(r\downarrow0\) and
\(R\to\infty\). Hence $S_{\ln}(t)u_0
\in
C([a,b]\times K),$ we complete the proof.
\end{proof}

We next use Lemma~\ref{lem:linear-flow-continuity} to establish the
joint continuity of the nonlinear Duhamel term.

\begin{lemma}
\label{lem:duhamel-term-continuity}
Assume that \((\mathcal U_\alpha)\) and \((\mathcal F)\) hold. Let
\(\tau\in(0,T_{\mu,0})\), and let $u:(0,\tau)\times\mathbb R^N\longrightarrow[0,\infty)$
be measurable and satisfy $\|u\|_{S_{\ln}(\cdot)u_0,\tau}<\infty.$
Then the nonlinear term
\[
\mathcal N[u](t,x)
:=
\int_0^t
S_{\ln}(t-s)\bigl(f(u(s,\cdot))\bigr)(x)\,ds
\]
is finite and jointly continuous on
\((0,\tau)\times\mathbb R^N\).
\end{lemma}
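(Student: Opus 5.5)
The idea is to reduce everything to the linear profile. Set $M:=\|u\|_{S_{\ln}(\cdot)u_0,\tau}$. By Lemma~\ref{lem:weighted-linear-flow-bound} and the definition of $T_{\mu,0}$, the quantity $K_\tau:=\sup_{0<t<\tau}\|S_{\ln}(t)u_0\|_{L^\infty}$ is finite, because $\tau<T_{\mu,0}$. Hence $0\le u\le MK_\tau$ a.e. on $(0,\tau)\times\mathbb R^N$. Since $f$ is locally Lipschitz with $f(0)=0$, with Lipschitz constant $L$ on $[0,MK_\tau]$, we get
\[
0\le f(u(s,y))\le L\,u(s,y)\le LM\,S_{\ln}(s)u_0(y).
\]

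The key step is the semigroup identity for the Riesz-type kernels. For $s,t-s>0$ with $t<N/2$, the Fourier representation gives $\mathcal P_{\ln}(t-s,\cdot)*\mathcal P_{\ln}(s,\cdot)=\mathcal P_{\ln}(t,\cdot)$. This is the classical Riesz composition formula $I_{2a}I_{2b}=I_{2(a+b)}$ with $a+b<N/2$, and the constants match because $\mathcal P_0$ is exactly the Riesz normalisation. By Tonelli, $S_{\ln}(t-s)S_{\ln}(s)u_0=S_{\ln}(t)u_0$ pointwise. (This is the inequality already quoted before Definition~\ref{def:mild-solution}.) Therefore
\[
0\le \mathcal N[u](t,x)\le LM\int_0^t S_{\ln}(t)u_0(x)\,ds=LMt\,S_{\ln}(t)u_0(x)\le LM\tau K_\tau ,
\]
which gives finiteness, in fact a uniform bound.

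For joint continuity I would mimic the proof of Lemma~\ref{lem:linear-flow-continuity} and truncate. Fix $[a,b]\times K\Subset(0,\tau)\times\mathbb R^N$. Write $g(s,y):=f(u(s,y))$, so that
\[
\mathcal N[u](t,x)=\int_0^\tau\mathbf 1_{s<t}\,\mathcal P_0(t-s)\int|x-y|^{2(t-s)-N}g(s,y)\,dy\,ds .
\]
This integrand is not continuous in $t$ near $s=t$, so I remove three bad regions. First, the time layer $s\in(t-\delta,t)$ contributes at most $LM\delta K_\tau$ by the bound above, uniformly in $(t,x)$. Second, for $s\le t-\delta$ the kernel exponent is at least $2\delta$, so the near-diagonal piece $|x-y|<r$ is at most
\[
C\,r^{2\delta}\,\|g\|_\infty\int_{|z|<r}|z|^{-N}\cdots
\]
Bounding this directly is awkward, so I would instead argue as in \eqref{eq:linear-flow-small-diagonal}: for $s\le t-\delta$ and $|x-y|<r<1$, $|x-y|^{2(t-s)-N}\le C_\delta|x-y|^{2\delta-N}$. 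With $g$ bounded, the piece is therefore $\le C r^{2\delta}$, uniformly. Third, the far tail $|y|>R$ is handled by $g\le LM\,S_{\ln}(s)u_0$ together with an estimate of $\int_{|y|>R}|x-y|^{2(t-s)-N}S_{\ln}(s)u_0(y)\,dy$.

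The far tail is the main obstacle. The cleanest route is to pick $\sigma\in(b,\tau)$ and use $|x-y|^{2(t-s)-N}\le C|y - x'|^{2(\sigma-s)-N}$ for $|y|$ large. Summing over $s$, this bounds the tail by $C\int_0^t\int_{|y|>R}\mathcal P_{\ln}(\sigma-s,y)S_{\ln}(s)u_0(y)\,dy\,ds$. This quantity is dominated by a convergent integral whose total, by the semigroup identity, is $\le t\,S_{\ln}(\sigma)u_0(0)<\infty$, so it tends to $0$ as $R\to\infty$ by dominated convergence. Monotonicity in $t$ of $|y|^{2(t-s)-N}$ for $|y|>1$ makes this uniform over $t\in[a,b]$.

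On the remaining truncated region, with $s\le t-\delta$, $\eta_r(x-y)$ and $\chi_R(y)$, the integrand is jointly continuous in $(t,x)$ for a.e. $(s,y)$ and bounded by $C\|g\|_\infty\mathbf 1_{B_{2R}}$. Two technical points remain. The indicator $\mathbf 1_{s<t-\delta}$ only jumps on a null set of $s$, and I would replace it by a continuous cutoff in $s-t$. With that, dominated convergence gives continuity, and the uniform approximation yields $\mathcal N[u]\in C([a,b]\times K)$.
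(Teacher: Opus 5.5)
Your proof is correct. The finiteness part (the Lipschitz bound $f(u)\le LM\,S_{\ln}(s)u_0$ combined with the Riesz composition identity) is exactly the paper's.

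For joint continuity you take a different route. The paper does no kernel truncation in this lemma. For each fixed $s$ it applies Lemma~\ref{lem:linear-flow-continuity} to the slice $f(u(s,\cdot))$. The hypothesis of that lemma holds because $S_{\ln}(r)f(u(s,\cdot))\le LM\,S_{\ln}(r+s)u_0$ is bounded for $r+s<\tau$. This makes $(r,x)\mapsto S_{\ln}(r)(f(u(s,\cdot)))(x)$ jointly continuous. A one-dimensional dominated convergence in $s$, with majorant $LMK_\tau$, then shows that $\int_0^{t-\varepsilon}$ is continuous. The remaining layer $\int_{t-\varepsilon}^t$ is at most $LM\varepsilon\,S_{\ln}(t)u_0(x)$, so the convergence is uniform.

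You instead unroll the proof of Lemma~\ref{lem:linear-flow-continuity} at the space--time level. You cut off the time layer, the near-diagonal region and the far tail simultaneously, then apply dominated convergence in $(s,y)$. Your estimates hold up:
\begin{itemize}
\item The near-diagonal bound uses only $\|g\|_\infty\le LMK_\tau$ and the boundedness of $\mathcal P_0$ on $[\delta,b]$.
\item The tail bound rests on $\int_0^b\int_{\mathbb R^N}\mathcal P_{\ln}(\sigma-s,y)S_{\ln}(s)u_0(y)\,dy\,ds=b\,S_{\ln}(\sigma)u_0(0)<\infty$, a neat use of the semigroup identity.
\item In the tail step you silently absorb the ratio $\mathcal P_0(t-s)/\mathcal P_0(\sigma-s)$ into a constant. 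This is legitimate but worth stating: the ratio is bounded for $t\in[a,b]$ and $s<t$, because $\mathcal P_0$ is bounded on $(0,b]$ and bounded below on $[\sigma-b,\sigma]\subset(0,N/2)$.
\item The abandoned aside containing $\int_{|z|<r}|z|^{-N}$ is meaningless, since that integral diverges, and should simply be deleted.
\item The continuous cutoff in $s-t$ is unnecessary. $\mathbf 1_{s<t_n-\delta}\to\mathbf 1_{s<t-\delta}$ for every $s\neq t-\delta$, so pointwise a.e.\ convergence already suffices for dominated convergence.
\end{itemize}

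As for what each route buys: the paper's argument is shorter because it reuses the linear lemma as a black box, checking its hypotheses slice by slice. Yours is self-contained and gives control uniform in $s$ directly, at the cost of repeating the truncation argument.
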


\begin{proof}
Set $H(t,x):=S_{\ln}(t)u_0(x)$ and $A:=\|u\|_{S_{\ln}(\cdot)u_0,\tau}.$
Since \(\tau<T_{\mu,0}\), $$M_\tau:=A\sup_{0<t<\tau}\|H(t)\|_{L^\infty(\mathbb R^N)}<\infty.$$
Hence $0\le u(t,x)\le AH(t,x)\le M_\tau.$
Since \(f(0)=0\) and \(f\) is Lipschitz continuous on
\([0,M_\tau]\), there exists \(L_\tau>0\) such that $0\le f(u(t,x))\le L_\tau u(t,x)
\le L_\tau A H(t,x).$
By positivity and the semigroup property, for \(0<s<t<\tau\),
\begin{equation}\label{eq:duhamel-integrand-profile-bound}
\begin{aligned}
0
&\le
S_{\ln}(t-s)\bigl(f(u(s,\cdot))\bigr)(x)
\le
L_\tau A
S_{\ln}(t-s)H(s,\cdot)(x)
=
L_\tau A H(t,x).
\end{aligned}
\end{equation}
In particular, \(\mathcal N[u](t,x)\) is finite.

For each fixed \(s\in(0,\tau)\), Lemma~\ref{lem:linear-flow-continuity},
applied to \(f(u(s,\cdot))\), shows that
\[
(r,x)\longmapsto
S_{\ln}(r)\bigl(f(u(s,\cdot))\bigr)(x)
\]
is jointly continuous for \(0<r<\tau-s\). Fix a compact cylinder $[a,b]\times K
\Subset
(0,\tau)\times\mathbb R^N,$
and, for \(\varepsilon\in(0,a)\), define
\[
\mathcal N_\varepsilon[u](t,x)
:=
\int_0^{t-\varepsilon}
S_{\ln}(t-s)\bigl(f(u(s,\cdot))\bigr)(x)\,ds.
\]
The preceding continuity and
\eqref{eq:duhamel-integrand-profile-bound}, together with the dominated
convergence theorem, imply that $ \in C([a,b]\times K).$
Moreover,
\[
\begin{aligned}
0
&\le
\mathcal N[u](t,x)-\mathcal N_\varepsilon[u](t,x)=
\int_{t-\varepsilon}^t
S_{\ln}(t-s)\bigl(f(u(s,\cdot))\bigr)(x)\,ds\le
L_\tau A\varepsilon H(t,x).
\end{aligned}
\]
Since \(H\) is continuous and bounded on \([a,b]\times K\),
\(\mathcal N_\varepsilon[u]\to\mathcal N[u]\) uniformly on this compact
cylinder as \(\varepsilon\downarrow0\). Therefore $\mathcal N[u]\in C([a,b]\times K).$
\end{proof}

We now establish the uniqueness of nonnegative mild solutions.

\begin{lemma}
\label{lem:mild-solution-uniqueness}
Assume that \((\mathcal U_\alpha)\) and \((\mathcal F)\) hold, and let \(T\in(0,T_{\mu,0}]\). Then there exists at most one
nonnegative mild solution with initial datum \(\mu u_0\) on \((0,T)\).
\end{lemma}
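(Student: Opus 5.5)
Let \(u,v\) be two nonnegative mild solutions with initial datum \(\mu u_0\) on \((0,T)\). The plan is a Gronwall argument carried out in the profile norm \(\|\cdot\|_{S_{\ln}(\cdot)u_0,\tau}\) rather than in \(L^\infty\). The reason is that \(S_{\ln}\) is not bounded on \(L^\infty(\mathbb R^N)\), so an \(L^\infty\) estimate cannot be propagated through the Duhamel term.

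Fix \(\tau\in(0,T)\) and write \(H(t,x):=S_{\ln}(t)u_0(x)\). Since \(\tau<T\le T_{\mu,0}\), we have \(\sup_{0<t<\tau}\|H(t)\|_{L^\infty}<\infty\). By Definition~\ref{def:mild-solution}, both \(u\) and \(v\) belong to \(\mathcal X_{\tau,u_0}\), so with \(A:=\max\{\|u\|_{S_{\ln}(\cdot)u_0,\tau},\|v\|_{S_{\ln}(\cdot)u_0,\tau}\}\) we get \(0\le u,v\le AH\le M_\tau\) on \((0,\tau)\times\mathbb R^N\). Let \(L_\tau\) be the Lipschitz constant of \(f\) on \([0,M_\tau]\), which exists by \((\mathcal F)\). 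Subtracting the two mild formulations \eqref{eq:mild-solution} (the linear terms cancel) and using positivity of the kernel gives
\[
|u(t,x)-v(t,x)|\le L_\tau\int_0^t S_{\ln}(t-s)\bigl(|u(s,\cdot)-v(s,\cdot)|\bigr)(x)\,ds .
\]
Set \(\phi(s):=\operatorname*{ess\,sup}_{x}|u(s,x)-v(s,x)|/H(s,x)\), which is at most \(2A\). Then \(|u(s,\cdot)-v(s,\cdot)|\le \phi(s)H(s,\cdot)\). The semigroup identity \(S_{\ln}(t-s)H(s,\cdot)=H(t,\cdot)\), already used in \eqref{eq:duhamel-integrand-profile-bound}, yields
\[
|u(t,x)-v(t,x)|\le L_\tau H(t,x)\int_0^t\phi(s)\,ds,
\qquad\text{hence}\qquad
\phi(t)\le L_\tau\int_0^t\phi(s)\,ds .
\]
Since \(\phi\) is bounded, Gronwall's inequality gives \(\phi\equiv0\) on \((0,\tau)\). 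Thus \(u=v\) almost everywhere, and because the right-hand side of \eqref{eq:mild-solution} is then identical for \(u\) and \(v\), we get \(u=v\) everywhere on \((0,\tau)\times\mathbb R^N\). Since \(\tau<T\) was arbitrary, the two solutions coincide on \((0,T)\).

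The delicate point is measurability of \(\phi\), which Gronwall requires. One way to avoid it is to skip Gronwall and iterate the bound instead. Starting from \(\phi\le 2A\), induction on the displayed inequality gives \(|u-v|(t,x)\le 2A\,(L_\tau t)^k/k!\;H(t,x)\) for every \(k\), and letting \(k\to\infty\) gives \(u=v\). This iteration only uses pointwise inequalities and the semigroup identity, so no measurability of \(\phi\) is needed. A second point to check is the semigroup identity itself on nonnegative functions: for \(s+r<N/2\), \(\mathcal P_{\ln}(r,\cdot)*\mathcal P_{\ln}(s,\cdot)=\mathcal P_{\ln}(r+s,\cdot)\) by the Riesz composition formula, and Tonelli's theorem justifies the exchange of integrals since everything is nonnegative. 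This is the step I would cite from \cite{CV24}.
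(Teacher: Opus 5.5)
Your proposal is correct and takes essentially the same route as the paper: a Gronwall argument for the weighted quotient \(|u-v|/S_{\ln}(t)u_0\), driven by the identity \(S_{\ln}(t-s)H(s,\cdot)=H(t,\cdot)\). The paper avoids the measurability issue you flag by using the running supremum \(D(t)=\sup_{0<s\le t,\,x}|u-v|(s,x)/H(s,x)\), which is nondecreasing and hence measurable; your factorial iteration handles the same point equally well, and so does your use of the mild formulation to upgrade a.e.\ equality to everywhere equality.
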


\begin{proof}
Let \(u\) and \(v\) be two nonnegative mild solutions on \((0,T)\).
Fix \(\tau\in(0,T)\), and set
\[
H(t,x):=S_{\ln}(t)u_0(x),
\qquad
A_\tau
:=
\max\left\{
\|u\|_{S_{\ln}(\cdot)u_0,\tau},
\|v\|_{S_{\ln}(\cdot)u_0,\tau}
\right\}.
\]
Since \(\tau<T_{\mu,0}\), $M_\tau
:=
A_\tau
\sup_{0<t<\tau}
\|H(t)\|_{L^\infty(\mathbb R^N)}
<\infty.$
Thus
\[
0\le u(t,x),v(t,x)\le M_\tau
\qquad
\text{for }(t,x)\in(0,\tau)\times\mathbb R^N.
\]
By the local Lipschitz continuity of \(f\), there exists \(L_\tau>0\)
such that
\[
|f(\rho)-f(\sigma)|
\le
L_\tau|\rho-\sigma|
\qquad
\text{for all }\rho,\sigma\in[0,M_\tau].
\]

Subtracting the two mild formulations and using positivity of
\(S_{\ln}(t)\), we obtain
\[
|u(t,x)-v(t,x)|
\le
L_\tau
\int_0^t
S_{\ln}(t-s)
\bigl(|u(s,\cdot)-v(s,\cdot)|\bigr)(x)\,ds.
\]
Define
\[
D(t)
:=
\sup_{\substack{0<s\le t \\x\in\mathbb R^N}}
\frac{|u(s,x)-v(s,x)|}{H(s,x)},
\qquad
0<t<\tau.
\]
Then \(D(t)<\infty\), and, by the semigroup property,
\[
\begin{aligned}
S_{\ln}(t-s)
\bigl(|u(s,\cdot)-v(s,\cdot)|\bigr)(x)
&\le
D(s)S_{\ln}(t-s)H(s,\cdot)(x)=
D(s)H(t,x).
\end{aligned}
\]
Consequently,
\[
\frac{|u(t,x)-v(t,x)|}{H(t,x)}
\le
L_\tau\int_0^t D(s)\,ds.
\]
Taking the supremum yields $D(t)
\le
L_\tau\int_0^t D(s)\,ds,0<t<\tau.$
Since \(D\) is nonnegative, Gronwall's inequality
implies $D(t)=0$ for every $t\in(0,\tau).$
Hence \(u=v\) on \((0,\tau)\times\mathbb R^N\). Since
\(\tau\in(0,T)\) was arbitrary, \(u=v\) on
\((0,T)\times\mathbb R^N\).
\end{proof}

Next, we  record a basic weighted estimate for the linear logarithmic
heat flow and identify the corresponding linear lifespan.

\begin{lemma}
\label{lem:weighted-linear-flow-bound}
Let \(\alpha>0\), then, for every \(\tau\in(0,T_\alpha)\), there exists a constant
\(C=C(N,\alpha,\tau)>0\) such that
\begin{equation}\label{eq:weighted-linear-flow-bound}
\sup_{0<t<\tau}
\|S_{\ln}(t)u_0\|_{L^\infty(\mathbb R^N)}
\le
C\|u_0\|_{L^\infty_\alpha(\mathbb R^N)}
\end{equation}
for every \(u_0\in L^\infty_\alpha(\mathbb R^N)\). Consequently, $T_{\mu,0}\ge T_\alpha.$ If, in addition,
\[
u_0(x)\asymp(1+|x|)^{-\alpha}
\qquad\text{as }|x|\to\infty,
\]
then $T_{\mu,0}=T_\alpha,$ where $T_{\alpha}$ is defined in \eqref{eq:guaranteed-linear-lifespan}.
\end{lemma}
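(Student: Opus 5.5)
The idea is to bound $S_{\ln}(t)u_0(x)$ pointwise by splitting the integral into a near-diagonal part and a far part. We use $u_0(y)\le \|u_0\|_{L^\infty_\alpha}(1+|y|)^{-\alpha}$, so it suffices to estimate
\[
I(t,x):=\mathcal P_0(t)\int_{\mathbb R^N}|x-y|^{2t-N}(1+|y|)^{-\alpha}\,dy
\]
uniformly for $0<t<\tau<T_\alpha$ and $x\in\mathbb R^N$.

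We split $\mathbb R^N$ into $\{|x-y|<1\}$ and $\{|x-y|\ge1\}$. On the first region $(1+|y|)^{-\alpha}\le1$, so the contribution is at most
\[
\mathcal P_0(t)\,|S^{N-1}|\int_0^1 r^{2t-1}\,dr=\mathcal P_0(t)\,\frac{|S^{N-1}|}{2t}.
\]
By \eqref{eq:zero}, $\mathcal P_0(t)/t$ stays bounded as $t\to0^+$, and $\mathcal P_0$ is continuous on $(0,N/2)$. Hence this term is bounded uniformly on $(0,\tau]$.

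On the second region $|x-y|^{2t-N}\le |x-y|^{2\tau-N}$ because $2t-N<0$. It then remains to show that
\[
J(x):=\int_{|x-y|\ge1}|x-y|^{2\tau-N}(1+|y|)^{-\alpha}\,dy
\]
is bounded uniformly in $x$. Here $2\tau<\min\{\alpha,N\}$.

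This is the main step. For $|x|\le 2$ it is immediate, since the integrand behaves like $|y|^{2\tau-N-\alpha}$ at infinity and $2\tau<\alpha$. For $|x|>2$ we split into three regions:
\begin{itemize}
\item[(a)] $|y|\le|x|/2$: here $|x-y|\ge|x|/2$, giving $\lesssim |x|^{2\tau-N}\int_{|y|\le|x|/2}(1+|y|)^{-\alpha}dy$. This is $\lesssim |x|^{2\tau-N}\max\{|x|^{N-\alpha},\ln|x|,1\}$, which is bounded since $2\tau<\alpha$ and $2\tau<N$.
\item[(b)] $|x-y|\le|x|/2$: here $|y|\ge|x|/2$, giving $\lesssim|x|^{-\alpha}\int_{1\le|z|\le|x|/2}|z|^{2\tau-N}dz\lesssim|x|^{2\tau-\alpha}$, which is bounded.
\item[(c)] the remaining region, where $|x-y|\ge|x|/2$ and $|y|\ge|x|/2$, so $|x-y|\le 3|y|$: the integrand is $\lesssim |y|^{2\tau-N-\alpha}$ integrated over $|y|\ge|x|/2$, which is finite.
\end{itemize}
Since $\mathcal P_0$ is bounded on $[\delta,\tau]$ and tends to $0$ as $t\to0^+$, it is bounded on $(0,\tau]$, and this yields \eqref{eq:weighted-linear-flow-bound}. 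Because $\tau<T_\alpha$ was arbitrary, $T_{\mu,0}\ge T_\alpha$.

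For sharpness, assume $u_0(y)\ge c(1+|y|)^{-\alpha}$ for $|y|\ge R$. If $\alpha\ge N$, then $T_\alpha=N/2$ is already the largest admissible value, so nothing remains to prove. If $\alpha<N$, take any $t\in(\alpha/2,N/2)$. Then
\[
S_{\ln}(t)u_0(0)\ge c\,\mathcal P_0(t)\int_{|y|\ge R}|y|^{2t-N}(1+|y|)^{-\alpha}\,dy=\infty,
\]
because $2t-N-\alpha>-N$. On the other hand, by Definition~\ref{def:linear-lifespan} together with Lemma~\ref{lem:linear-flow-continuity}, boundedness of the flow on $(0,\tau')$ for some $\tau'>t$ forces $S_{\ln}(t)u_0$ to be finite everywhere. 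We use the lower semicontinuous representative, for which the essential bound holds pointwise, as noted in the proof of Lemma~\ref{lem:linear-flow-continuity}. This contradiction gives $T_{\mu,0}\le\alpha/2$, hence $T_{\mu,0}=T_\alpha$.
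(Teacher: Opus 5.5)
Your plan works and takes a different route from the paper, but step (c) is wrong as written.

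\textbf{The error in (c).} On $\{|y|\ge|x|/2,\ |x-y|\ge|x|/2\}$ you infer $|x-y|^{2\tau-N}\lesssim|y|^{2\tau-N}$ from $|x-y|\le 3|y|$. Since $2\tau-N<0$, an \emph{upper} bound on $|x-y|$ gives only a \emph{lower} bound on $|x-y|^{2\tau-N}$, so the inference runs the wrong way. What you need is $|x-y|\gtrsim|y|$, and this does hold on that region:
\begin{itemize}
\item if $|y|\ge2|x|$, then $|x-y|\ge|y|-|x|\ge|y|/2$;
\item if $|x|/2\le|y|<2|x|$, then $|x-y|\ge|x|/2>|y|/4$.
\end{itemize}
So $|x-y|\ge|y|/4$ there, and the integrand is at most $4^{N-2\tau}|y|^{2\tau-N-\alpha}$. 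Its integral over $|y|\ge|x|/2$ is bounded by $C(|x|/2)^{2\tau-\alpha}/(\alpha-2\tau)\le C$ for $|x|>2$. With this replacement the bound on $J$ closes.

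There is a smaller slip in the far region: $|x-y|^{2t-N}\le|x-y|^{2\tau-N}$ holds because $|x-y|\ge1$ and $t<\tau$, not because $2t-N<0$.

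\textbf{The rest is correct.} This covers the near-diagonal term via \eqref{eq:zero}, regions (a) and (b), and the sharpness argument. Your use of the lower semicontinuous representative to pass from $S_{\ln}(t)u_0(0)=\infty$ to an infinite essential supremum is valid. It is simpler still to note that the tail integral diverges at every $x$, so $S_{\ln}(t)u_0\equiv\infty$ for $t\in(\alpha/2,N/2)$.

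\textbf{Comparison with the paper.} The paper avoids all case distinctions. Since $|\cdot|^{2t-N}$ and $(1+|\cdot|)^{-\alpha}$ are radially nonincreasing, the rearrangement inequality gives $\sup_x\int|x-y|^{2t-N}(1+|y|)^{-\alpha}dy\le|\mathbb S^{N-1}|\int_0^\infty r^{2t-1}(1+r)^{-\alpha}dr\le|\mathbb S^{N-1}|\bigl(\frac1{2t}+\frac1{\alpha-2t}\bigr)$. The whole estimate thus reduces to a one-dimensional integral at $x=0$. It yields the explicit bound $C_N\mathcal P_0(t)\bigl(\frac1t+\frac1{\alpha-2t}\bigr)$, which also records how the constant degenerates as $t\uparrow\alpha/2$.

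Your spatial splitting is more elementary. Freezing $t=\tau$ in the far part loses the explicit $t$-dependence, which is harmless for this lemma. In exchange, your decomposition also produces decay in $x$, which the rearrangement argument cannot give. The paper uses exactly such a splitting in Lemmas~\ref{lem:weighted-convolution} and~\ref{lem:linear-profile-fast-decay}, with the cleaner outer region $\{|y|>2(1+|x|)\}$, on which $|x-y|\ge|y|/2$ is immediate.
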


\begin{proof}
For \(0<t<\tau\), the functions
\[
x\longmapsto |x|^{2t-N}
\qquad\text{and}\qquad
x\longmapsto (1+|x|)^{-\alpha}
\]
are nonnegative, and radially nonincreasing. By the convolution
form of the Riesz rearrangement inequality,
\[
\int_{\mathbb R^N}
|x-y|^{2t-N}(1+|y|)^{-\alpha}\,dy
\le
\int_{\mathbb R^N}
|y|^{2t-N}(1+|y|)^{-\alpha}\,dy
\]
for every \(x\in\mathbb R^N\). Consequently,
\[
\begin{aligned}
\sup_{x\in\mathbb R^N}
\int_{\mathbb R^N}
|x-y|^{2t-N}(1+|y|)^{-\alpha}\,dy
&\le
\int_{\mathbb R^N}
|y|^{2t-N}(1+|y|)^{-\alpha}\,dy=
|\mathbb S^{N-1}|
\int_0^\infty r^{2t-1}(1+r)^{-\alpha}\,dr.
\end{aligned}
\]
Since \(0<t<\tau<T_\alpha\), we have \(2t<\alpha\). Splitting the
radial integral at \(r=1\), we obtain
\[
\begin{aligned}
\int_0^\infty r^{2t-1}(1+r)^{-\alpha}\,dr
&\le
\int_0^1 r^{2t-1}\,dr
+
\int_1^\infty r^{2t-\alpha-1}\,dr=
\frac{1}{2t}+\frac{1}{\alpha-2t}.
\end{aligned}
\]
Therefore,
\[
\sup_{x\in\mathbb R^N}
\int_{\mathbb R^N}
|x-y|^{2t-N}(1+|y|)^{-\alpha}\,dy
\le
C_N\left(
\frac1t+\frac1{\alpha-2t}
\right).
\]

Using $|u_0(y)|
\le
\|u_0\|_{L^\infty_\alpha(\mathbb R^N)}
(1+|y|)^{-\alpha},$
we deduce that
\[
\begin{aligned}
\|S_{\ln}(t)u_0\|_{L^\infty(\mathbb R^N)}
&\le
\mathcal P_0(t)
\|u_0\|_{L^\infty_\alpha(\mathbb R^N)}
\sup_{x\in\mathbb R^N}
\int_{\mathbb R^N}
|x-y|^{2t-N}(1+|y|)^{-\alpha}\,dy
\\
&\le
C_N\mathcal P_0(t)
\left(
\frac1t+\frac1{\alpha-2t}
\right)
\|u_0\|_{L^\infty_\alpha(\mathbb R^N)}.
\end{aligned}
\]
By \eqref{eq:zero} and \eqref{eq:n2},
\[
\sup_{0<t<\tau}
\mathcal P_0(t)
\left(
\frac1t+\frac1{\alpha-2t}
\right)
<\infty.
\]
This proves \eqref{eq:weighted-linear-flow-bound}. Since \eqref{eq:weighted-linear-flow-bound} holds for every
\(\tau<T_\alpha\), Definition~\ref{def:linear-lifespan} yields $T_{\mu,0}\ge T_\alpha.$

Suppose now that
\[
u_0(x)\asymp(1+|x|)^{-\alpha}
\qquad\text{as }|x|\to\infty.
\]
If \(\alpha\ge N\), then \(T_\alpha=N/2\), and the definition of
\(T_{\mu,0}\) immediately gives $T_{\mu,0}\le\frac N2=T_\alpha.$

It remains to consider \(0<\alpha<N\), in which case
\(T_\alpha=\alpha/2\). There exist \(c_0>0\) and \(R>0\) such that $u_0(y)\ge c_0(1+|y|)^{-\alpha}$ for $|y|\ge R.$
Hence, for every \(t\in[\alpha/2,N/2)\),
\[
\begin{aligned}
S_{\ln}(t)u_0(0)
&=
\mathcal P_0(t)
\int_{\mathbb R^N}|y|^{2t-N}u_0(y)\,dy\ge
c\,\mathcal P_0(t)
\int_R^\infty r^{2t-\alpha-1}\,dr
=
+\infty.
\end{aligned}
\]
Consequently, for every \(\tau>\alpha/2\), $\sup_{0<t<\tau}
\|S_{\ln}(t)u_0\|_{L^\infty(\mathbb R^N)}
=
+\infty.$
It follows that $T_{\mu,0}\le\frac{\alpha}{2}=T_\alpha.$
Combining this with \(T_{\mu,0}\ge T_\alpha\), we conclude that $T_{\mu,0}=T_\alpha.$
\end{proof}

We now prove Proposition~\ref{prop:linear-terminal-behavior}.

\begin{proof}[\textbf{Proof of Proposition
\ref{prop:linear-terminal-behavior}.}]
\textbf{(i).} Set $\alpha=2T_*$ and define 
\[u_0(x):=\frac{1}{(1+|x|)^{\alpha}(\log(e+|x|))^2}.\]
It is obvious that \(u_0\in L^\infty_\alpha(\mathbb R^N)\). Similar to the proof of Lemma \ref{lem:weighted-linear-flow-bound}, we obtain
\[
S_{\ln}(t)u_0(x)
\le
S_{\ln}(t)u_0(0)
\qquad
\text{for every }x\in\mathbb R^N,
\]
and hence $\|S_{\ln}(t)u_0\|_{L^\infty(\mathbb R^N)}
=
S_{\ln}(t)u_0(0).$

For \(0<t\le T_*\), split the integral at \(r=1\). On \((0,1)\),
\[
\int_0^1
\frac{r^{2t-1}}
{(1+r)^\alpha\bigl(\log(e+r)\bigr)^2}\,dr
\le
\int_0^1 r^{2t-1}\,dr
=
\frac1{2t}.
\]
On \((1,\infty)\), since \(\alpha=2T_*\) and \(t\le T_*\),
\[
\frac{r^{2t-1}}
{(1+r)^\alpha\bigl(\log(e+r)\bigr)^2}
\le
C\frac{r^{2t-\alpha-1}}{(\log r)^2}
\le
\frac{C}{r(\log r)^2}
\]
for all sufficiently large \(r\). Consequently,
\[
\sup_{0<t\le T_*}
\int_1^\infty
\frac{r^{2t-1}}
{(1+r)^\alpha\bigl(\log(e+r)\bigr)^2}\,dr
<\infty.
\]
Since $\sup_{0<t\le T_*}\frac{\mathcal P_0(t)}{t}<\infty,$
it follows that
\[
\sup_{0<t\le T_*}
\|S_{\ln}(t)u_0\|_{L^\infty(\mathbb R^N)}
<\infty.
\]

Now let \(t>T_*\). Since \(2t-\alpha>0\), for all sufficiently large
\(r\),
\[
\frac{r^{2t-1}}
{(1+r)^\alpha\bigl(\log(e+r)\bigr)^2}
\ge
c\frac{r^{2t-\alpha-1}}{(\log r)^2}.
\]
Hence
\[
\int_1^\infty
\frac{r^{2t-1}}
{(1+r)^\alpha\bigl(\log(e+r)\bigr)^2}\,dr
=+\infty,
\]
and therefore $S_{\ln}(t)u_0(0)=+\infty.$ By
Definition~\ref{def:linear-lifespan}, $T_{\mu,0}=T_*.$

\smallskip
\textbf{(ii).}
Let $u_0(x):=(1+|x|)^{-2T_*}.$
By Lemma~\ref{lem:weighted-linear-flow-bound},
\(T_{\mu,0}\ge T_*\). For \(t\in[T_*/2,T_*)\), we have
\begin{align*}
u_\mu(t,0)
&=
\mu\mathcal P_0(t)
\int_{\mathbb R^N}
|y|^{2t-N}(1+|y|)^{-2T_*}\,dy\ge
c\mu\mathcal P_0(t)
\int_1^\infty r^{2t-2T_*-1}\,dr
\ge
\frac{c_0\mu}{T_*-t},
\end{align*}
Moreover, the same integral diverges for \(t\ge T_*\). Hence $T_{\mu,0}=T_*.$

\smallskip

\textbf{(iii).}
Assume that \(T_{\mu,0}=N/2\). By
Lemma~\ref{prop:lowbound}, there exist a compact set
\(K\subset\mathbb R^N\), \(c>0\), and \(t_0\in(0,N/2)\) such that
\[
\inf_{x\in K}u_\mu(t,x)
\ge
\frac{c\mu}{\frac N2-t}
\qquad
\text{for all }t\in\left(t_0,\frac N2\right).
\]
Consequently,
\[
\sup_{0<t<N/2}
\|u_\mu(t,\cdot)\|_{L^\infty(\mathbb R^N)}
=\infty,
\]
which complete the proof.
\end{proof}

\subsection{Auxiliary lower bounds and integral estimates}

We first derive a lower integral inequality on compact subsets, which
captures the terminal growth of the linear solution in the three decay
regimes and will be used to reduce the blow-up analysis to a scalar
ODE comparison.
\begin{lemma}\label{prop:lowbound}
Assume $(\mathcal U_\alpha)$ and $(\mathcal F)$ hold, let $K = \overline{B_R(x_0)} \subset \mathbb{R}^N$ for some $R > 0$ and $x_0 \in \mathbb{R}^N$, and let $u$ be a nonnegative mild solution corresponding to the initial datum $\mu u_0$ with the maximal existence time $T_{\mu,f}$. Define $m_K(t) := \inf_{x\in K} u(t,x)$. Then, for every fixed $\delta \in (0, N/2)$, there exist constants $c_{K,\delta} > 0$ and $C_{K,\delta} > 0$ independent of $\mu$ such that
\begin{equation}\label{closed-ineq-ball}
m_K(t) \ge \frac{\mu c_{K,\delta}}{N-2t} + C_{K,\delta} \int_{t-\delta}^t f(m_K(s))\,ds, \quad t \in [\delta,T_{\mu,f}).
\end{equation}

Furthermore, the following lower bounds hold under specific tail behaviors of the initial datum:
\begin{itemize}
    \item \textbf{Slow-decay case:} If \eqref{eq:initial-slow-decay-lower} holds for some $0 < \alpha < N$, then for every $\delta \in (0, \alpha/2)$, there exist constants $c_{K,\delta,\alpha} > 0$ and $C_{K,\delta} > 0$ such that
    \begin{equation}\label{closed-ineq-ball-slow-tail}
    m_K(t) \ge \frac{\mu c_{K,\delta,\alpha}}{\alpha-2t} + C_{K,\delta} \int_{t-\delta}^t f(m_K(s))\,ds, \quad t \in [\delta,T_{\mu,f}).
    \end{equation}

    \item \textbf{Critical-tail case:} If \eqref{eq:initial-critical-decay-lower} holds, then for every $\delta \in (0, N/2)$, there exist constants $c_{K,\delta}^{\mathrm{crit}} > 0$ and $C_{K,\delta} > 0$ such that
    \begin{equation}\label{closed-ineq-ball-critical-tail}
    m_K(t) \ge \frac{\mu c_{K,\delta}^{\mathrm{crit}}}{(N-2t)^2} + C_{K,\delta} \int_{t-\delta}^t f(m_K(s))\,ds, \quad t \in [\delta,T_{\mu,f}).
    \end{equation}
\end{itemize}
\end{lemma}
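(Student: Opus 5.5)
Throughout, the starting point is the mild formulation \eqref{eq:mild-solution}. Every term in it is nonnegative, so we may discard parts of both the linear and the Duhamel terms. Fix $x\in K$ and $t\in[\delta,T_{\mu,f})$. Since $f\ge0$ and $S_{\ln}$ is positive, we restrict the Duhamel integral to $s\in(t-\delta,t)$. For such $s$ we have $t-s\in(0,\delta)$, and we further restrict the spatial integral to $y\in K$:
\[
u(t,x)\ge \mu S_{\ln}(t)u_0(x)+\int_{t-\delta}^t \mathcal P_0(t-s)\int_K |x-y|^{2(t-s)-N} f(u(s,y))\,dy\,ds .
\]
On $K$ we have $f(u(s,y))\ge f(m_K(s))$ because $f$ is nondecreasing. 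Hence the Duhamel part is bounded below by
\[
\int_{t-\delta}^t \mathcal P_0(t-s)\Bigl(\inf_{x\in K}\int_K|x-y|^{2(t-s)-N}dy\Bigr) f(m_K(s))\,ds .
\]
For $x,y\in K$ we have $|x-y|\le 2R$, so $|x-y|^{2\theta-N}\ge (2R)^{2\theta-N}$ when $2\theta\le N$. This gives $\int_K|x-y|^{2\theta-N}dy\ge |K|(2R)^{2\theta-N}$, which is bounded below uniformly for $\theta\in(0,\delta]$. The coefficient $\mathcal P_0(\theta)$ tends to $0$ as $\theta\to0^+$ by \eqref{eq:zero}, so it is not bounded below near $0$. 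There are two ways around this. The first is to use the sharper fact that $\mathcal P_0(\theta)\int_{B_{2R}}|z|^{2\theta-N}dz=\mathcal P_0(\theta)|\mathbb S^{N-1}|(2R)^{2\theta}/(2\theta)$ stays bounded below as $\theta\to0$. Since $x$ may lie on the boundary of $K$, we would use that $K\cap B_{2R}(x)$ contains a cone of fixed aperture, so the integral is still at least a fixed fraction of the full ball integral. Then $\mathcal P_0(\theta)\int_K|x-y|^{2\theta-N}dy\ge C_{K,\delta}>0$ for all $\theta\in(0,\delta]$ and $x\in K$, using \eqref{eq:zero} near $0$ and continuity and positivity of $\mathcal P_0$ on compact subintervals of $(0,N/2)$. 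The second is to note that $f(m_K(s))$ is only used through this integral bound. I expect this step to be the main technical point, and I would take the cone argument: $\theta^{-1}(2R)^{2\theta}$ times $\mathcal P_0(\theta)$ is bounded below.

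For the linear term, we bound $S_{\ln}(t)u_0(x)$ from below uniformly in $x\in K$, using only the far-field part of $u_0$. In the general case we only know $u_0\ge0$ and $u_0\not\equiv0$. Choose a ball $B$ with $\int_B u_0>0$. For $y\in B$ and $x\in K$, $|x-y|\le D$, where $D$ depends only on $K$ and $B$. Then $S_{\ln}(t)u_0(x)\ge \mathcal P_0(t)\min\{1,D^{-N}\}\int_Bu_0$, where we used that $|x-y|^{2t-N}\ge \min(1,D^{2t-N})$. By \eqref{eq:n2}, $\mathcal P_0(t)\ge c/(N-2t)$ for $t\in[\delta,N/2)$, since $\mathcal P_0$ is positive and continuous on $[\delta,N/2)$. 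This gives \eqref{closed-ineq-ball} with $c_{K,\delta}$ independent of $\mu$.

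In the slow-decay case, $u_0(y)\ge c_0|y|^{-\alpha}$ for $|y|\ge R_1$. For $x\in K$ and $|y|\ge R_2$ large we have $|x-y|\ge |y|/2$, and because $2t-N<0$ also $|x-y|^{2t-N}\ge (2|y|)^{2t-N}$. Therefore
\[
S_{\ln}(t)u_0(x)\ge c\,\mathcal P_0(t)\int_{R_2}^\infty r^{2t-\alpha-1}dr=\frac{c\,\mathcal P_0(t)R_2^{2t-\alpha}}{\alpha-2t}.
\]
Since $\mathcal P_0$ is bounded below on $[\delta,\alpha/2]\subset(0,N/2)$, this yields $c_{K,\delta,\alpha}/(\alpha-2t)$. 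In the critical case, the same computation with $\alpha=N$ gives $\int_{R_2}^\infty r^{2t-N-1}dr=R_2^{2t-N}/(N-2t)$. Multiplied by $\mathcal P_0(t)\gtrsim (N-2t)^{-1}$ from \eqref{eq:n2}, this produces $(N-2t)^{-2}$, which is \eqref{closed-ineq-ball-critical-tail}. Finally, we take the infimum over $x\in K$ of the sum of the two lower bounds. The resulting constants depend only on $K$, $\delta$, $u_0$ and $N$, and not on $\mu$.
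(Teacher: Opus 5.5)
Your proposal is correct and follows essentially the same route as the paper. The linear term is bounded below using a bounded set where $u_0$ has positive mass (or the far-field tail in the slow-decay and critical cases), together with $\mathcal P_0(t)\gtrsim (N-2t)^{-1}$ on $[\delta,N/2)$. The Duhamel term is restricted to the layer $(t-\delta,t)$ and to $y\in K$, and a cone inside $K$ with vertex $x$, combined with \eqref{eq:zero}, gives a kernel-mass bound uniform as $\theta\to 0$.

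Two small points. In the tail estimate the inequality you actually need is $|x-y|\le 2|y|$ (which yields $|x-y|^{2t-N}\ge(2|y|)^{2t-N}$), not $|x-y|\ge|y|/2$. Also, your ``fixed fraction'' claim requires the cone's height, not just its aperture, to be bounded below uniformly in $x\in K$; the paper takes height $R/2$ and opening $\omega\cdot e_x>1/2$.
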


\begin{proof}
Write $u(t,x)=L(t,x)+D(t,x),$
where
\[
L(t,x):=\mu\int_{\mathbb R^N}\mathcal P_{\ln}(t,x-y)\,u_0(y)\,dy
\]
and
\[
D(t,x):=\int_0^t\int_{\mathbb R^N}\mathcal P_{\ln}(t-s,x-y)\,f(u(s,y))\,dy\,ds.
\]

We estimate the two terms separately.

\medskip
\noindent
\textbf{Step 1. Lower bounds for the linear part.}
By \eqref{ass:u0}, there exist a bounded measurable set
\(E\subset\mathbb R^N\) and a constant \(\eta>0\) such that
\begin{equation}\label{eq:positive-set-u0}
|E|>0,
\qquad
u_0(y)\ge\eta
\quad\text{for a.e. }y\in E.
\end{equation}

Consequently, for every \(x\in K\),
\[
\begin{aligned}
L(t,x)
&=
\mu\mathcal P_0(t)
\int_{\mathbb R^N}
|x-y|^{2t-N}u_0(y)\,dy\ge
\mu\eta\mathcal P_0(t)
\int_E|x-y|^{2t-N}\,dy.
\end{aligned}
\]
Since both \(K\) and \(E\) are bounded, the constant
\[
D_{K,E}
:=
\max\left\{
1,\,
\sup\bigl\{|x-y|:\ x\in K,\ y\in E\bigr\}
\right\}
\]
is finite. For \(t\in[\delta,N/2)\) and  \(x\in K\), we have
\[
L(t,x)
\ge
\mu\eta |E|
\mathcal P_0(t)
D_{K,E}^{\,2\delta-N}.
\]
By \eqref{eq:n2}, there exists \(a_\delta>0\) such that
\[
\mathcal P_0(t)
\ge
\frac{a_\delta}{N-2t},
\qquad
t\in[\delta,N/2).
\]
Therefore,
\[
L(t,x)
\ge
\frac{\mu\eta |E|a_\delta
D_{K,E}^{\,2\delta-N}}{N-2t},
\qquad
t\in[\delta,N/2),\quad x\in K.
\]
Setting $c_{K,\delta}
:=
\eta |E|a_\delta D_{K,E}^{\,2\delta-N}>0,$
we obtain
\begin{equation}\label{lin-lb}
\inf_{x\in K}L(t,x)
\ge
\frac{\mu c_{K,\delta}}{N-2t},
\qquad
t\in[\delta,N/2).
\end{equation}

Suppose now, in addition, that
\eqref{eq:initial-slow-decay-lower} holds.
Choose \(R_K>1\) sufficiently large so that
\[
|x-y|\le 2|y|,\quad u_0(y)\ge c (1+|y|)^{-\alpha}
\qquad
\text{for all }x\in K,\quad |y|\ge R_K.
\]
Therefore,
\[
\begin{aligned}
L(t,x)
&\ge
c\mu\mathcal P_0(t)
\int_{|y|\ge R_K}
|x-y|^{2t-N}(1+|y|)^{-\alpha}\,dy
\\
&\ge
C\mu\mathcal P_0(t)
\int_{R_K}^{\infty}
r^{2t-\alpha-1}\,dr=
c\mu\mathcal P_0(t)
\frac{R_K^{\,2t-\alpha}}{\alpha-2t},
\end{aligned}
\]
for every \(x\in K\) and \(0<t<\alpha/2\). Since
\(\alpha<N\), the function \(\mathcal P_0\) has a positive lower bound
on \([\delta,\alpha/2]\).
Consequently, for every fixed \(\delta\in(0,\alpha/2)\), there exists
\(c_{K,\delta,\alpha}>0\) such that
\begin{equation}\label{lin-lb-slow-tail}
\inf_{x\in K}L(t,x)
\ge
\frac{\mu c_{K,\delta,\alpha}}{\alpha-2t},
\qquad
t\in[\delta,\alpha/2).
\end{equation}

Suppose finally that
\eqref{eq:initial-critical-decay-lower} holds. 
Therefore,
\[
\begin{aligned}
L(t,x)
&\ge
c\mu\mathcal P_0(t)
\int_{\mathbb R^N}
|x-y|^{2t-N}(1+|y|)^{-N}\,dy
\\
&\ge
C\mu\mathcal P_0(t)
\int_{R_K}^{\infty}
r^{2t-N-1}\,dr=
c\mu\mathcal P_0(t)
\frac{R_K^{\,2t-N}}{N-2t}.
\end{aligned}
\]
For \(t\in[\delta,N/2)\), the quantity
\(R_K^{\,2t-N}\) has a positive lower bound. Moreover, $\mathcal P_0(t)\ge\frac{c_\delta}{N-2t}.$
Consequently,
\[
\inf_{x\in K}L(t,x)
\ge
\frac{\mu c_{K,\delta}^{\mathrm{crit}}}{(N-2t)^2},
\qquad
t\in[\delta,N/2).
\]

\medskip
\noindent
\textbf{Step 2. Lower bound for the Duhamel term on the recent time layer.}
Fix \(t\in[\delta,T_{\mu,f})\). Since \(D(t,x)\ge 0\), and using that \(f\) is nondecreasing, we obtain
\[
D(t,x)\ge \int_{t-\delta}^t f(m_K(s))
\left(\int_K \mathcal P_{\ln}(t-s,x-y)\,dy\right)ds.
\]
Thus it remains to prove that there exists \(C_{K,\delta}>0\) such that
\begin{equation}\label{kernel-ball-lb}
\int_K \mathcal P_{\ln}(\tau,x-y)\,dy\ge C_{K,\delta}
\qquad
\text{for all }x\in K,\ \tau\in(0,\delta].
\end{equation}

To this end, fix \(x\in K\), where $K=\overline{B_R(x_0)}.$
Since \(K\) is a closed ball, one can construct a cone \(\Gamma_x\subset K\) with vertex at \(x\), height \(R/2\), and opening depending only on the dimension \(N\), uniformly in \(x\in K\). More precisely, let
\[
e_x:=
\begin{cases}
\dfrac{x_0-x}{|x_0-x|}, & x\neq x_0,\\[1mm]
\text{any unit vector}, & x=x_0,
\end{cases}
\]
and define
\[
\Gamma_x:=\left\{x+r\omega:\ 0<r<\frac R2,\ \omega\in \mathbb S^{N-1},\ \omega\cdot e_x>\frac12\right\}.
\]

We claim that \(\Gamma_x\subset K\) for every \(x\in K\). The case \(x=x_0\) is immediate. Thus it remains to consider \(x\in K\setminus\{x_0\}\). Let \(y=x+r\omega\in \Gamma_x\), and write $d:=|x-x_0|\in(0,R].$
Note that
\[
|y-x_0|^2
=
d^2+r^2-2rd\,(e_x\cdot\omega).
\]
Because \(y\in\Gamma_x\), we have \(\omega\cdot e_x>\frac12\), and hence $|y-x_0|^2<d^2+r^2-rd.$ Note that
\[
d^2-rd+r^2\le \max\{r^2,\ R^2-Rr+r^2\}.
\]
Since \(0<r<R/2\), we have $|y-x_0|^2<R^2,$
which shows that \(y\in B_R(x_0)\subset K\). 

Therefore there exists a constant \(\sigma_K>0\), depending only on \(K\), such that
\[
\int_K \mathcal P_{\ln}(\tau,x-y)\,dy
\ge
\int_{\Gamma_x}\mathcal P_{\ln}(\tau,x-y)\,dy
=
\mathcal P_0(\tau)\int_{\Gamma_x}|x-y|^{2\tau-N}\,dy
\ge
\sigma_K\,\mathcal P_0(\tau)\frac{(R/2)^{2\tau}}{2\tau}.
\]
By \eqref{eq:zero}, we prove \eqref{kernel-ball-lb}. Consequently,
\begin{equation}\label{duhamel-lb}
\inf_{x\in K}D(t,x)\ge C_{K,\delta}\int_{t-\delta}^t f(m_K(s))\,ds,
\qquad t\in[\delta,T_{\mu,f}).
\end{equation}

\medskip
\noindent
\textbf{Step 3. Conclusion.}
Since
\[
m_K(t)=\inf_{x\in K}u(t,x)\ge \inf_{x\in K}L(t,x)+\inf_{x\in K}D(t,x),
\]
combining Step 1 and Step 2, we obtain the desired result.
\end{proof}

The following weighted convolution estimate will be used repeatedly to
control the nonlinear Duhamel terms.

\begin{lemma}\label{lem:weighted-convolution}
Let \(N\ge1\), \(p>1\), and \(0<a<b<N\). Then there exists \(C=C(N,p)>0\) such that, for every \(x\in\mathbb R^N\),
\begin{equation}\label{eq:weighted-convolution}
\int_{\mathbb R^N}
|x-y|^{-(N-b+a)}(1+|y|)^{-pb}\,dy
\le
C\left(
\frac1{b-a}
+
\frac1{a+(p-1)b}
\right)(1+|x|)^{-a}.
\end{equation}
\end{lemma}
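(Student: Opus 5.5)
The plan is to split $\mathbb R^N$ into regions adapted to the two competing singularities: the local one at $y=x$, of order $N-(b-a)<N$, and the tail at infinity, of total order $(N-b+a)+pb>N$. The expectation is that the first region produces the factor $1/(b-a)$ and the second the factor $1/(a+(p-1)b)$. Throughout, constants must depend only on $N$ and $p$. Since $0<a<b<N$, every power of $2$ or $3$ raised to an exponent lying in $[-2pN,2pN]$ is harmless.

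\textbf{Case $|x|\le1$.} Here $(1+|x|)^{-a}\ge 2^{-a}\ge 2^{-N}$, so an absolute bound suffices.
\begin{itemize}
\item On $\{|y|\le2\}$ we drop the weight and use $|x-y|\le3$. This gives $\int_{|z|\le3}|z|^{-(N-(b-a))}\,dz=C_N\,3^{b-a}/(b-a)$.
\item On $\{|y|>2\}$ we have $|x-y|\ge|y|/2$. Hence the integrand is at most $C|y|^{-(N-b+a)-pb}$, and the radial integral $\int_2^\infty r^{-1-(a+(p-1)b)}\,dr$ is bounded by $C/(a+(p-1)b)$.
\end{itemize}

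\textbf{Case $|x|\ge1$.} Here $(1+|x|)^{-a}\asymp|x|^{-a}$, and we split into three regions.

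\emph{Region $A=\{|y|\le|x|/2\}$.} This is the step I expect to be the main obstacle. The naive bound $|x-y|\ge|x|/2$ leaves $\int_{|y|\le|x|/2}(1+|y|)^{-pb}\,dy$, which produces denominators such as $N-pb$ that can degenerate. To avoid this, I would split the kernel as
\[
|x-y|^{-(N-b+a)}=|x-y|^{-a}\,|x-y|^{-(N-b)} .
\]
On $A$ we have $|x-y|\ge|x|/2$ and also $|x-y|\ge|y|$, so the kernel is at most $2^{a}|x|^{-a}|y|^{-(N-b)}$. The contribution of $A$ is then bounded by
\[
C|x|^{-a}\int_0^\infty r^{b-1}(1+r)^{-pb}\,dr\le C|x|^{-a}\Bigl(\frac1b+\frac1{(p-1)b}\Bigr).
\]
It remains to compare these constants with the target. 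First, $1/b\le1/(b-a)$. Second, $1/((p-1)b)=\frac{p}{p-1}\cdot\frac1{pb}$. Since $pb=(b-a)+(a+(p-1)b)$, the quantity $1/(pb)$ is at most $1/\max\{b-a,\,a+(p-1)b\}$. Therefore $1/((p-1)b)\le C(p)\bigl(\frac1{b-a}+\frac1{a+(p-1)b}\bigr)$.

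\emph{Region $B=\{|y-x|<|x|/2\}$.} Here $1+|y|\asymp1+|x|$, so the contribution is at most
\[
C(1+|x|)^{-pb}\int_{|z|<|x|/2}|z|^{-(N-b+a)}\,dz\le\frac{C}{b-a}(1+|x|)^{-pb}|x|^{b-a}.
\]
Since $|x|\ge1$, this is at most $\frac{C}{b-a}|x|^{-a}(1+|x|)^{-(p-1)b}\le\frac{C}{b-a}|x|^{-a}$.

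\emph{Region $C$, the remaining set.} Here $|y|\ge|x|/2$ and $|x-y|\ge|x|/2$. Then $|y|\le|x|+|x-y|\le3|x-y|$, so the integrand is at most $C|y|^{-(N-b+a)-pb}$. Integrating over $\{|y|\ge|x|/2\}$ gives $C|x|^{-a-(p-1)b}/(a+(p-1)b)\le C|x|^{-a}/(a+(p-1)b)$.

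\textbf{Conclusion.} Summing the contributions of all regions in both cases yields \eqref{eq:weighted-convolution}. The only delicate step is the treatment of region $A$, where the kernel splitting $|x-y|^{-a}|x-y|^{-(N-b)}$ is used precisely to keep the constant uniform in $a$ and $b$.
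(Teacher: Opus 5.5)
Your proof is correct, but it takes a different route from the paper's on the key region. The paper works at the single scale $\mathcal R=1+|x|$, so it needs no case split in $|x|$, and it uses the regions $\{|x-y|\le\mathcal R/2\}$, $\{|x-y|>\mathcal R/2,\ |y|\le2\mathcal R\}$ and $\{|y|>2\mathcal R\}$.

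On the middle region the paper uses exactly the naive bound you wanted to avoid, namely $|x-y|^{-(N-b+a)}\le(\mathcal R/2)^{-(N-b+a)}$. It then controls $\int_{|y|\le2\mathcal R}(1+|y|)^{-pb}\,dy\le C\mathcal R^{N-b}$ uniformly through a dichotomy at $b_*=N(p+1)/(2p)$:
\begin{itemize}
\item for $b\le b_*$ it discards part of the decay, $(1+|y|)^{-pb}\le(1+|y|)^{-b}$, so the denominator becomes $N-b\ge N(p-1)/(2p)$;
\item for $b>b_*$ the weight is integrable, with $pb-N>N(p-1)/2$.
\end{itemize}
The resulting contribution is $O(\mathcal R^{-a})$ with no singular factor, and it is absorbed using $1\le N/(b-a)$.

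Your factorization $|x-y|^{-a}|x-y|^{-(N-b)}$, combined with $|x-y|\ge|y|$ on $\{|y|\le|x|/2\}$, instead shifts part of the singularity to the origin. This produces an $x$-independent integral with constants $1/b+1/((p-1)b)$, which you absorb through $pb=(b-a)+(a+(p-1)b)$.

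What your approach buys: it removes the auxiliary threshold $b_*$ and the lossy bound, and it makes transparent where the two factors $1/(b-a)$ and $1/(a+(p-1)b)$ come from.

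What it costs: your regions are scaled by $|x|$ rather than $1+|x|$ and degenerate as $x\to0$, so you need the separate case $|x|\le1$. You also handle the shell $|y|\sim|x|$ away from the diagonal with the tail estimate $|y|\le3|x-y|$, where the paper uses a volume bound.
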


\begin{proof}
Set \(\alpha:=N-b+a\in(0,N)\) and \(\mathcal R:=1+|x|\). Split
\[
\Omega_1:=\{|x-y|\le\mathcal R/2\},\qquad
\Omega_2:=\{|x-y|>\mathcal R/2,\ |y|\le2\mathcal R\},\qquad
\Omega_3:=\{|y|>2\mathcal R\},
\]
and denote the corresponding integrals by \(I_1,I_2,I_3\).

On \(\Omega_1\), \(1+|y|\ge\mathcal R/2\), and hence
\[
I_1
\le
C\mathcal R^{-pb}\int_{|z|\le\mathcal R/2}|z|^{-\alpha}\,dz
\le
\frac{C}{b-a}\mathcal R^{-a-(p-1)b}
\le
\frac{C}{b-a}\mathcal R^{-a}.
\]

For \(I_2\),
\[
I_2
\le
C\mathcal R^{-\alpha}
\int_{|y|\le2\mathcal R}(1+|y|)^{-pb}\,dy.
\]
Set \(b_*:=N(p+1)/(2p)\). If \(0<b\le b_*\), then
\[
\int_{|y|\le2\mathcal R}(1+|y|)^{-pb}\,dy
\le
C\int_0^{2\mathcal R}(1+r)^{N-1-b}\,dr
\le
\frac{C}{N-b}\mathcal R^{N-b}
\le
C\mathcal R^{N-b},
\]
since \(N-b\ge N(p-1)/(2p)\). If \(b_*<b<N\), then $pb-N>\frac{N(p-1)}2,$
so
\[
\int_{|y|\le2\mathcal R}(1+|y|)^{-pb}\,dy
\le
\int_{\mathbb R^N}(1+|y|)^{-pb}\,dy
\le C
\le C\mathcal R^{N-b}.
\]
Thus
\[
I_2\le C\mathcal R^{-\alpha+N-b}
=C\mathcal R^{-a}
\le
\frac{C}{b-a}\mathcal R^{-a}.
\]

Finally, on \(\Omega_3\), \(|x-y|\ge |y|/2\), and therefore
\[
\begin{aligned}
I_3
&\le
C\int_{|y|>2\mathcal R}
|y|^{-(N-b+a)-pb}\,dy
\le
\frac{C}{a+(p-1)b}\mathcal R^{-a}.
\end{aligned}
\]
Combining the three estimates and recalling \(\mathcal R=1+|x|\) yields \eqref{eq:weighted-convolution}.
\end{proof}

The next estimate controls the time integral arising from the weighted
convolution bound and will be used in the construction of nonlinear
supersolutions.

\begin{lemma}\label{lem:time-integral-claim}
Let \(N>0\) and \(1<p\le2\). 
Then there exists a constant \(C>0\), depending only on \(N,p\), such that, for every \(0<a<N\),
\begin{equation}\label{eq:time-integral-claim}
\int_a^N
b^{-p}\mathcal P_0\!\left(\frac{b-a}{2}\right)
\left(
\frac1{b-a}
+
\frac1{a+(p-1)b}
\right)db
\le
Ca^{-1}.
\end{equation}
\end{lemma}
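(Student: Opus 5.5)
The plan is to replace $\mathcal P_0$ by an elementary majorant and then integrate explicitly. Along the range of integration the argument $t=(b-a)/2$ lies in $(0,(N-a)/2)$. Hence $N-2t=N-b+a\ge a>0$, so the singularity of $\mathcal P_0$ at $N/2$ is never reached, but it may be approached when $a$ is small.

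\textbf{Step 1 (kernel majorant).} I would first prove that there is $C_N>0$ with
\[
\mathcal P_0(t)\le C_N\,\frac{t}{N-2t},\qquad 0<t<\frac N2 .
\]
Near $t=0$ this follows from \eqref{eq:zero}, since $t/(N-2t)\sim t/N$. Near $t=N/2$ it follows from \eqref{eq:n2}, since $t/(N-2t)\sim \frac N2\,(N-2t)^{-1}$. On any compact subinterval both $\mathcal P_0$ and $t/(N-2t)$ are continuous and positive. Substituting $t=(b-a)/2$ gives
\[
\mathcal P_0\!\left(\tfrac{b-a}{2}\right)\le C_N\,\frac{b-a}{N-b+a}.
\]

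\textbf{Step 2 (reduce both terms to one integrand).} For the first summand, the factor $b-a$ cancels against $\frac1{b-a}$, which gives the bound $C\,b^{-p}(N-b+a)^{-1}$. For the second summand I would use $a+(p-1)b\ge(p-1)b$ together with $b-a\le b$. This gives
\[
b^{-p}\,\frac{b-a}{N-b+a}\cdot\frac{1}{a+(p-1)b}\le \frac{1}{p-1}\,\frac{b^{-p}}{N-b+a}.
\]
So the whole left-hand side of \eqref{eq:time-integral-claim} is at most $C(N,p)\int_a^N b^{-p}(N-b+a)^{-1}\,db$.

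\textbf{Step 3 (splitting).} Suppose first that $a<N/2$, and split the integral at $b=N/2$.
\begin{itemize}
\item On $[a,N/2]$ we have $N-b+a\ge N/2$, so this piece is at most $\frac{C}{N}\int_a^\infty b^{-p}\,db=\frac{C\,a^{1-p}}{N(p-1)}$. Here the restriction $p\le2$ is used: $a^{1-p}=a^{-1}a^{2-p}\le N^{2-p}a^{-1}$.
\item On $[N/2,N]$ we have $b^{-p}\le (N/2)^{-p}$ and $\int_{N/2}^N\frac{db}{N-b+a}=\ln\frac{N/2+a}{a}\le\ln\frac Na$. Since $\ln(N/a)\le N/a$, this piece is also $\le C a^{-1}$.
\end{itemize}
If instead $a\ge N/2$, then $b^{-p}\le (N/2)^{-p}$ on the whole range and $\int_a^N\frac{db}{N-b+a}=\ln\frac Na\le\ln 2\le\frac{N}{a}\ln 2$. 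Collecting the cases yields \eqref{eq:time-integral-claim} with a constant depending only on $N$ and $p$.

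The main obstacle is Step 1: obtaining one global bound on $\mathcal P_0$ that simultaneously captures the linear vanishing at $0$ (needed to cancel $\frac1{b-a}$) and the blow-up $(N-2t)^{-1}$ at $N/2$. Once that majorant is in place, everything reduces to the elementary integral $\int b^{-p}(N-b+a)^{-1}\,db$. In that integral, the small-$b$ region costs $a^{1-p}$, which is controlled by $a^{-1}$ exactly because $p\le2$, and the region $b$ close to $N$ contributes only a logarithm.
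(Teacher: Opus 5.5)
Your proposal is correct and follows essentially the same route as the paper. You use the global majorant $\mathcal P_0(t)\le C_N\,t/(N-2t)$, absorb the second summand into the first via $a+(p-1)b\ge(p-1)b$ (the paper writes this as $pa+(p-1)r\ge(p-1)(a+r)$ after substituting $r=b-a$), and make the same case split at $a=N/2$ with a midpoint splitting, where $p\le2$ turns $a^{1-p}$ into $a^{-1}$ and the remaining piece is only logarithmic. The differences are cosmetic: you work in $b$ rather than $r=b-a$, and for $a\ge N/2$ you bound $b^{-p}$ and integrate $(N-b+a)^{-1}$ rather than the reverse.
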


\begin{proof}
Set \(r:=b-a\). Then \(0<r<N-a\), and the left-hand side of
\eqref{eq:time-integral-claim} equals
\[
\int_0^{N-a}
(a+r)^{-p}
\mathcal P_0\!\left(\frac r2\right)
\left(
\frac1r+\frac1{pa+(p-1)r}
\right)dr.
\]
Using \(\mathcal P_0(r/2)\le C_0r/(N-r)\), we obtain
\[
\begin{aligned}
&\int_0^{N-a}
(a+r)^{-p}
\mathcal P_0\!\left(\frac r2\right)
\left(
\frac1r+\frac1{pa+(p-1)r}
\right)dr\\
&\qquad\le
C\int_0^{N-a}
\frac{(a+r)^{-p}}{N-r}\,dr
+
C\int_0^{N-a}
\frac{r(a+r)^{-p}}
{(N-r)\,[pa+(p-1)r]}\,dr
=:J_1+J_2.
\end{aligned}
\]
Since $pa+(p-1)r\ge (p-1)(a+r),$
we have \(J_2\le C J_1\). It therefore suffices to estimate \(J_1\).

If \(a\ge N/2\), then \(N-r\ge a\ge N/2\) for \(0<r<N-a\), and hence
\[
J_1
\le
C\int_0^{N-a}(a+r)^{-p}\,dr
\le
Ca^{1-p}
\le
Ca^{-1}.
\]
If \(0<a<N/2\), splitting at \(r=N/2\) gives
\[
\begin{aligned}
J_1
&\le
C\int_0^{N/2}(a+r)^{-p}\,dr
+
C\int_{N/2}^{N-a}\frac{dr}{N-r}\le
Ca^{1-p}
+
C\log\frac{N}{2a}.
\end{aligned}
\]
Since \(1<p\le2\) and \(0<a<N\), $a^{1-p}\le Ca^{-1},\,\log\frac{N}{2a}\le Ca^{-1}.$
Thus \(J_1+J_2\le Ca^{-1}\), which proves \eqref{eq:time-integral-claim}.
\end{proof}

\section{Existence, Nonexistence, and Lifespan Properties}

In this section, we develop the basic solvability and lifespan theory for
\eqref{eq:main-cauchy-problem}. We first prove instantaneous
nonexistence under the Osgood condition at the origin. We then establish
local well-posedness under \((\mathcal U_\alpha)\) and \((\mathcal F)\),
together with two complementary lifespan properties: nonlinearities of
at most linear growth do not shorten the linear lifespan, whereas the
Osgood condition at infinity forces the lifespan to converge to zero in
the large-data limit. Finally, we derive a continuation criterion in the
natural profile norm and obtain lower and upper estimates for the
solution near the terminal time.

\subsection{Instantaneous nonexistence under the Osgood condition at zero}

We show that the Osgood condition at the origin is incompatible with
the existence of a nontrivial finite nonnegative solution on any
positive time interval.

\begin{proof}[\textbf{Proof of Theorem
\ref{thm:instantaneous-nonexistence-osgood}.}]
Suppose, to the contrary, that there exist \(\mu>0\),
\(T\in(0,N/2]\), and a finite nonnegative function $u$
satisfying \eqref{eq:mild-solution}, with the integral therein finite.

Choose $0<a<t_1<t_2<t_0<T.$ By \eqref{ass:u0}, there exist a bounded measurable set
\(E\subset\mathbb R^N\) and a constant \(\eta>0\) such that $u_0\ge\eta
\,\,\text{a.e. on}\,\,E$ where $|E|>0.$ 

We first derive a uniform lower bound on balls lying sufficiently far
from the origin. Since \(E\) is bounded, there exists \(R_E>0\) such
that \(E\subset B_{R_E}\). Let \(\xi\in\mathbb R^N\) and
\(z\in B_1(\xi)\). For all sufficiently large \(|\xi|\) and every
\(y\in E\), one has
\[
|z-y|
\le
|\xi|+1+R_E
\le
C_E(1+|\xi|).
\]
Using only the linear term in the mild formulation, for
\(t\in[a,t_2]\) we obtain
\[
\begin{aligned}
u(t,z)
&\ge
\mu S_{\ln}(t)u_0(z)\ge
\mu\eta\mathcal P_0(t)
\int_E|z-y|^{2t-N}\,dy\ge \mu\eta\mathcal P_0(t)|E|\,C_E^{\,2t-N}(1+|\xi|)^{2t-N}.
\end{aligned}
\]
Since \(\mathcal P_0\) has a positive minimum on
\([a,t_2]\) and  \(t\ge a\),  there exist constants \(c_0>0\) and \(R_0>0\),
independent of \(\xi\), such that
\begin{equation}\label{eq:ball-linear-lower-osgood}
u(t,z)
\ge
A_\xi
:=
c_0(1+|\xi|)^{2a-N},
\qquad
t\in[a,t_2],\quad
z\in B_1(\xi),
\end{equation}
whenever \(|\xi|\ge R_0\).

Similar to the proof of Lemma \ref{prop:lowbound}, we obtain that there exists
\(\kappa>0\), depending only on \(N\) and \(t_2-a\), such that
\begin{equation}\label{eq:local-kernel-mass-osgood}
\int_{B_1(\xi)}
\mathcal P_{\ln}(\tau,z-y)\,dy
\ge\kappa
\end{equation}
for every \(\xi\in\mathbb R^N\), \(z\in B_1(\xi)\), and
\(0<\tau\le t_2-a\).

Fix now \(\xi\in\mathbb R^N\) with \(|\xi|\ge R_0\). Define
recursively, for \(t\in[a,t_2]\), $v_{\xi,0}(t):=A_\xi$
and
\[v_{\xi,j+1}(t)
:=
A_\xi+
\kappa\int_a^t f(v_{\xi,j}(s))\,ds,
\qquad j\ge0.\]
We claim that
\begin{equation}\label{eq:u-dominates-vj}
u(t,z)\ge v_{\xi,j}(t)
\end{equation}
for every \(j\ge0\), \(t\in[a,t_2]\), and \(z\in B_1(\xi)\).

For \(j=0\), this follows from
\eqref{eq:ball-linear-lower-osgood}. Suppose that
\eqref{eq:u-dominates-vj} holds for some \(j\ge0\). By the mild
formulation, the monotonicity of \(f\), and
\eqref{eq:local-kernel-mass-osgood}, we obtain
\[
\begin{aligned}
u(t,z)
&\ge
A_\xi+
\int_a^t
\int_{B_1(\xi)}
\mathcal P_{\ln}(t-s,z-y)
f(u(s,y))\,dy\,ds
\\
&\ge
A_\xi+
\int_a^t
f(v_{\xi,j}(s))
\left(
\int_{B_1(\xi)}
\mathcal P_{\ln}(t-s,z-y)\,dy
\right)ds
\\
&\ge
A_\xi+
\kappa\int_a^t f(v_{\xi,j}(s))\,ds=
v_{\xi,j+1}(t).
\end{aligned}
\]
Thus \eqref{eq:u-dominates-vj} follows by induction.

Since \(f\) is nondecreasing, the sequence
\(\{v_{\xi,j}\}_{j\ge0}\) is nondecreasing. Set
\[
v_\xi(t):=\lim_{j\to\infty}v_{\xi,j}(t),
\qquad t\in[a,t_2].
\]
By \eqref{eq:u-dominates-vj} and the finiteness of \(u\), one has $v_\xi(t)<\infty.$
The monotone convergence theorem and the continuity of \(f\) give
\[v_\xi(t)
=
A_\xi+
\kappa\int_a^t f(v_\xi(s))\,ds.\]
Hence \(v_\xi\in C^1([a,t_2])\) and
\begin{equation}\label{eq:scalar-ode-osgood}
v_\xi'(t)=\kappa f(v_\xi(t)),
\qquad
v_\xi(a)=A_\xi.
\end{equation}
Furthermore,
\begin{equation}\label{eq:u-dominates-v}
u(t,z)\ge v_\xi(t),
\qquad
t\in[a,t_2],\quad z\in B_1(\xi).
\end{equation}

By \eqref{con:osgood-zero}, we have $f(\tau)>0$ for $0<\tau\le\delta$, and choose
$\delta$ sufficiently small so that
\begin{equation}\label{eq:choose-delta-osgood}
\frac1\kappa
\int_0^\delta\frac{d\tau}{f(\tau)}
<
t_1-a.
\end{equation}
By the definition of $A_\xi$, after increasing \(R_0\) if necessary, we
may assume that
\[
0<A_\xi<\delta
\qquad\text{whenever }|\xi|\ge R_0.
\]

We claim that
\begin{equation}\label{eq:v-reaches-delta}
v_\xi(t_1)\ge\delta
\qquad\text{for every }|\xi|\ge R_0.
\end{equation}
Suppose instead that \(v_\xi(t_1)<\delta\). Since \(v_\xi\) is
nondecreasing, \(v_\xi(t)<\delta\) for \(t\in[a,t_1]\),
we obtain
\[
\kappa(t_1-a)
=
\int_{A_\xi}^{v_\xi(t_1)}
\frac{d\tau}{f(\tau)}
<
\int_0^\delta\frac{d\tau}{f(\tau)},
\]
which contradicts \eqref{eq:choose-delta-osgood}. Thus
\eqref{eq:v-reaches-delta} holds.

Since \(v_\xi\) is nondecreasing, it follows from
\eqref{eq:u-dominates-v} that
\[u(t,z)\ge\delta,
\qquad
t\in[t_1,t_2],\quad
|z|\ge R_0.\]
By the monotonicity of \(f\),
\[
f(u(t,z))\ge f(\delta)>0,
\qquad
t\in[t_1,t_2],\quad |z|\ge R_0.
\]
Fix \(x\in\mathbb R^N\). For every \(s\in[t_1,t_2]\), we have
\(0<t_0-s<N/2\), and hence
\[
\begin{aligned}
S_{\ln}(t_0-s)
\bigl(f(u(s,\cdot))\bigr)(x)
&\ge
f(\delta)\mathcal P_0(t_0-s)
\int_{|y|\ge R_0}
|x-y|^{2(t_0-s)-N}\,dy.
\end{aligned}
\]
After increasing the lower radial threshold so that
\(|y|\ge2|x|+R_0\), one has $|x-y|\le\frac32|y|,$ it follows that
\[
\begin{aligned}
\int_{|y|\ge R_0}
|x-y|^{2(t_0-s)-N}\,dy
&\ge
C\int_{2|x|+R_0}^{\infty}
r^{2(t_0-s)-1}\,dr=+\infty.
\end{aligned}
\]
Therefore,
\[
S_{\ln}(t_0-s)
\bigl(f(u(s,\cdot))\bigr)(x)
=+\infty
\qquad
\text{for every }s\in[t_1,t_2].
\]
By Tonelli's theorem,
\[
\int_0^{t_0}
S_{\ln}(t_0-s)
\bigl(f(u(s,\cdot))\bigr)(x)\,ds
=+\infty,
\]
This contradiction completes the proof.
\end{proof}

\subsection{Local well-posedness and basic properties of the lifespan}

We first introduce the notion of an integral
supersolution, which will serve as an upper barrier in the monotone
iteration scheme.

\begin{definition}\label{def:integral-supersolution}
Let \(0<T\le N/2\). A finite measurable function $\overline U:(0,T)\times\mathbb R^N\to[0,\infty)$
is called an integral supersolution of
\eqref{eq:main-cauchy-problem} on \((0,T)\) if, for every
\((t,x)\in(0,T)\times\mathbb R^N\), the integrals below are finite and
\[\overline U(t,x)
\ge
\mu S_{\ln}(t)u_0(x)
+
\int_0^t
S_{\ln}(t-s)
\bigl(f(\overline U(s,\cdot))\bigr)(x)\,ds.\]
\end{definition}

We next establish a useful existence principle showing that the
construction of a suitable integral supersolution is sufficient to
obtain a unique nonnegative mild solution.

\begin{lemma}
\label{lem:existence-below-supersolution}
Assume that \((\mathcal U_\alpha)\) and \((\mathcal F)\) hold, let
\(\mu>0\), and let \(T\in(0,T_{\mu,0}]\). Suppose that there exists a
nonnegative measurable function
\(\overline U:(0,T)\times\mathbb R^N\to[0,\infty)\) such that
\begin{equation}\label{eq:integral-supersolution}
\overline U(t,x)
\ge
\mu S_{\ln}(t)u_0(x)
+
\int_0^t
S_{\ln}(t-s)
\bigl(f(\overline U(s,\cdot))\bigr)(x)\,ds
\end{equation}
for every \((t,x)\in(0,T)\times\mathbb R^N\). Then there exists a
nonnegative measurable function \(u_\mu\) satisfying
\begin{equation}\label{eq:pointwise-integral-solution}
u_\mu(t,x)
=
\mu S_{\ln}(t)u_0(x)
+
\int_0^t
S_{\ln}(t-s)
\bigl(f(u_\mu(s,\cdot))\bigr)(x)\,ds
\end{equation}
and $\mu S_{\ln}(t)u_0(x)
\le
u_\mu(t,x)
\le
\overline U(t,x)$
for every \((t,x)\in(0,T)\times\mathbb R^N\). If, in addition,
\[
\overline U|_{(0,\tau)\times\mathbb R^N}
\in\mathcal X_{\tau,u_0}
\qquad
\text{for every }\tau\in(0,T),
\]
then \(u_\mu\) is the unique nonnegative mild solution of
\eqref{eq:main-cauchy-problem} on \((0,T)\).
\end{lemma}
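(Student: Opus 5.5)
We will construct \(u_\mu\) by monotone iteration starting from the linear flow, using \(\overline U\) as an upper barrier. Set \(u^{(0)}(t,x):=\mu S_{\ln}(t)u_0(x)\) and define recursively
\[
u^{(k+1)}(t,x):=\mu S_{\ln}(t)u_0(x)+\int_0^t S_{\ln}(t-s)\bigl(f(u^{(k)}(s,\cdot))\bigr)(x)\,ds,\qquad k\ge0.
\]

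\textbf{Ordering and barrier.} We first prove by induction that
\[
0\le u^{(0)}\le u^{(1)}\le\cdots\le u^{(k)}\le\overline U
\]
pointwise on \((0,T)\times\mathbb R^N\). The base step has two parts:
\begin{itemize}
\item \(u^{(0)}\le\overline U\) follows directly from \eqref{eq:integral-supersolution}, because the Duhamel integral there is nonnegative (\(f\ge0\) and the kernel is positive);
\item \(u^{(0)}\le u^{(1)}\) holds for the same reason.
\end{itemize}
For the inductive step, if \(u^{(k-1)}\le u^{(k)}\le\overline U\), then monotonicity of \(f\) and positivity of \(S_{\ln}(t-s)\) give \(u^{(k)}\le u^{(k+1)}\). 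They also give \(u^{(k+1)}\le \mu S_{\ln}(t)u_0+\int_0^tS_{\ln}(t-s)f(\overline U(s))\,ds\le\overline U\). Each \(u^{(k)}\) is finite everywhere, since the Duhamel term for \(\overline U\) is finite by hypothesis. Measurability of each iterate follows from Tonelli, because all integrands are nonnegative and measurable in \((s,y,t,x)\).

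\textbf{Passage to the limit.} Let \(u_\mu:=\lim_k u^{(k)}\le\overline U\), which is finite and measurable. Since \(f\) is continuous and nondecreasing, \(f(u^{(k)}(s,y))\uparrow f(u_\mu(s,y))\). The monotone convergence theorem, applied to the nonnegative integrand \(\mathcal P_{\ln}(t-s,x-y)f(u^{(k)}(s,y))\) on \((0,t)\times\mathbb R^N\), lets us pass to the limit in the recursion and yields \eqref{eq:pointwise-integral-solution} at every \((t,x)\). The two-sided bound \(\mu S_{\ln}(t)u_0\le u_\mu\le\overline U\) is inherited from the iterates.

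\textbf{Mild solution and uniqueness.} Assume in addition that \(\overline U|_{(0,\tau)\times\mathbb R^N}\in\mathcal X_{\tau,u_0}\) for every \(\tau\in(0,T)\). Then \(0\le u_\mu\le\overline U\) gives
\[
\|u_\mu\|_{S_{\ln}(\cdot)u_0,\tau}\le\|\overline U\|_{S_{\ln}(\cdot)u_0,\tau}<\infty.
\]
Together with \eqref{eq:pointwise-integral-solution}, this shows that \(u_\mu\) is a nonnegative mild solution in the sense of Definition~\ref{def:mild-solution}, since \(T\le T_{\mu,0}\). Uniqueness then follows directly from Lemma~\ref{lem:mild-solution-uniqueness}.

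The only delicate point is bookkeeping rather than estimates. Measurability and finiteness of the iterates must be checked at every point, not merely almost everywhere, because the definition demands \eqref{eq:mild-solution} pointwise. This is handled by working with the natural integral representatives and using Tonelli for nonnegative functions throughout. No Lipschitz bound on \(f\) is needed in the existence part; it enters only through the uniqueness lemma.
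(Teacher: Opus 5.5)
Your proposal is correct and follows essentially the same argument as the paper. Both start the monotone iteration from $\mu S_{\ln}(t)u_0$, use induction to keep the iterates ordered below $\overline U$, and pass to the limit by monotone convergence. Both then conclude with the comparison $\|u_\mu\|_{S_{\ln}(\cdot)u_0,\tau}\le\|\overline U\|_{S_{\ln}(\cdot)u_0,\tau}$ and Lemma~\ref{lem:mild-solution-uniqueness}. Your added bookkeeping on measurability via Tonelli and pointwise finiteness of the iterates is a harmless refinement that the paper leaves implicit.
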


\begin{proof}
Set \(H(t,x):=S_{\ln}(t)u_0(x)\) and define
\(u^{(0)}(t,x):=\mu H(t,x)\) and
\[
u^{(n+1)}(t,x)
:=
\mu H(t,x)
+
\int_0^t
S_{\ln}(t-s)
\bigl(f(u^{(n)}(s,\cdot))\bigr)(x)\,ds.
\]
We claim that
\begin{equation}\label{eq:iteration-order}
0\le
u^{(n)}(t,x)
\le
u^{(n+1)}(t,x)
\le
\overline U(t,x)
\end{equation}
for every \(n\in\mathbb N_0\) and
\((t,x)\in(0,T)\times\mathbb R^N\).

Since \(f\ge0\), \(u^{(1)}\ge u^{(0)}=\mu H\), while
\eqref{eq:integral-supersolution} gives \(u^{(0)}\le\overline U\).
If \(u^{(n-1)}\le u^{(n)}\le\overline U\), then the monotonicity of \(f\)
and positivity of \(S_{\ln}(t)\) yield \(u^{(n)}\le u^{(n+1)}\) and
\[
\begin{aligned}
u^{(n+1)}(t,x)
&\le
\mu H(t,x)
+
\int_0^t
S_{\ln}(t-s)
\bigl(f(\overline U(s,\cdot))\bigr)(x)\,ds
\le
\overline U(t,x).
\end{aligned}
\]
Thus \eqref{eq:iteration-order} follows by induction. Hence, for every \((t,x)\in(0,T)\times\mathbb R^N\),
\[
u_\mu(t,x):=\lim_{n\to\infty}u^{(n)}(t,x)
\]
exists and satisfies $\mu S_{\ln}(t)u_0(x)
\le
u_\mu(t,x)
\le
\overline U(t,x)$. Since
\(u^{(n)}\uparrow u_\mu\) and \(f\) is nondecreasing and continuous,
\(f(u^{(n)})\uparrow f(u_\mu)\). Therefore, by the monotone convergence
theorem,
\[
\begin{aligned}
u_\mu(t,x)
&=
\mu H(t,x)
+
\int_0^t
S_{\ln}(t-s)
\bigl(f(u_\mu(s,\cdot))\bigr)(x)\,ds,
\end{aligned}
\]
which proves \eqref{eq:pointwise-integral-solution}.

Now assume additionally that
\(\overline U|_{(0,\tau)\times\mathbb R^N}\in\mathcal X_{\tau,u_0}\)
for every \(\tau\in(0,T)\). Then,
\[
0\le
\frac{u_\mu(t,x)}{H(t,x)}
\le
\frac{\overline U(t,x)}{H(t,x)}
\]
on \((0,\tau)\times\mathbb R^N\). Hence $\|u_\mu\|_{S_{\ln}(\cdot)u_0,\tau}
\le
\|\overline U\|_{S_{\ln}(\cdot)u_0,\tau}
<\infty,$
so \(u_\mu|_{(0,\tau)\times\mathbb R^N}\in\mathcal X_{\tau,u_0}\).
Thus \(u_\mu\) is a nonnegative mild solution on \((0,T)\), and
uniqueness follows from Lemma~\ref{lem:mild-solution-uniqueness}.
\end{proof}

\begin{lemma}
\label{lem:lifespan-monotonicity-mu}
Assume that \((\mathcal U_\alpha)\) and \((\mathcal F)\) hold. Then the
mapping $\mu\mapsto T_{\mu,f}$
is non-increasing on \((0,\infty)\). 
\end{lemma}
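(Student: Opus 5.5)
The plan is to use the solution for the larger amplitude as an integral supersolution for the smaller one, and then apply Lemma~\ref{lem:existence-below-supersolution}. Fix $0<\mu_1<\mu_2$. If $T_{\mu_2,f}=0$ there is nothing to prove, so assume $T_{\mu_2,f}>0$. Take any $T\in(0,T_{\mu_2,f})$ for which \eqref{eq:main-cauchy-problem} with datum $\mu_2u_0$ has a nonnegative mild solution $u_{\mu_2}$ on $(0,T)$; by the definition of $T_{\mu_2,f}$ as a supremum, such $T$ exist arbitrarily close to $T_{\mu_2,f}$. Since $T\le T_{\mu_2,0}=T_{\mu_1,0}$, and the linear lifespan is independent of $\mu$ by Definition~\ref{def:linear-lifespan}, the time interval is admissible for amplitude $\mu_1$.

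Set $\overline U:=u_{\mu_2}$. Since $S_{\ln}(t)u_0\ge0$ and $\mu_2>\mu_1$, the mild formulation gives
\[
\overline U(t,x)=\mu_2S_{\ln}(t)u_0(x)+\int_0^tS_{\ln}(t-s)\bigl(f(\overline U(s,\cdot))\bigr)(x)\,ds
\ge \mu_1S_{\ln}(t)u_0(x)+\int_0^tS_{\ln}(t-s)\bigl(f(\overline U(s,\cdot))\bigr)(x)\,ds
\]
for every $(t,x)\in(0,T)\times\mathbb R^N$. This is exactly inequality \eqref{eq:integral-supersolution} for the datum $\mu_1u_0$. Moreover, $\overline U|_{(0,\tau)\times\mathbb R^N}\in\mathcal X_{\tau,u_0}$ for every $\tau\in(0,T)$, since $u_{\mu_2}$ is a mild solution in the sense of Definition~\ref{def:mild-solution}. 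Lemma~\ref{lem:existence-below-supersolution} therefore produces the unique nonnegative mild solution with datum $\mu_1u_0$ on $(0,T)$, and it satisfies $u_{\mu_1}\le u_{\mu_2}$. Hence $T_{\mu_1,f}\ge T$. Taking the supremum over all such $T$ yields $T_{\mu_1,f}\ge T_{\mu_2,f}$, which is the claimed monotonicity.

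The only point needing care is bookkeeping rather than analysis. One must check that the definition of $T_{\mu,f}$ uses $(0,T_{\mu,0}]$ with $T_{\mu,0}$ independent of $\mu$, so the same $T$ is allowed for both amplitudes. One must also note that the case $T_{\mu_2,f}=0$ is trivial, although Theorem~\ref{thm:local-wellposedness} rules it out anyway. No estimate is needed, because the comparison is entirely contained in the monotone iteration of Lemma~\ref{lem:existence-below-supersolution}.
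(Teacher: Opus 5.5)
Your proposal is correct and matches the paper's proof: the paper likewise uses the solution for amplitude $\mu_2$ as an integral supersolution for the datum $\mu_1u_0$, invokes Lemma~\ref{lem:existence-below-supersolution}, and concludes that every admissible existence time for $\mu_2$ is admissible for $\mu_1$, so the suprema are ordered. Your explicit check that $u_{\mu_2}$ restricted to $(0,\tau)\times\mathbb R^N$ lies in $\mathcal X_{\tau,u_0}$ for every $\tau<T$, which is what makes the lemma's output a mild solution, is a detail the paper leaves implicit.
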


\begin{proof}
For \(\mu>0\), define
\[
\mathcal E_\mu
:=\left\{
T\in(0,T_{\mu,0}]:
\eqref{eq:main-cauchy-problem}
\text{ admits a nonnegative mild solution on }(0,T)
\right\},
\]
then $T_{\mu,f}:=\sup\mathcal E_\mu.$ Let \(0<\mu_1\le\mu_2\), and let $T\in\mathcal E_{\mu_2}.$
Then there exists a nonnegative mild solution \(u_2\) on \((0,T)\)
corresponding to the initial datum \(\mu_2u_0\). Since
\(\mu_1\le\mu_2\), \(u_0\ge0\), we have
\[
\begin{aligned}
u_2(t,x)
&=
\mu_2S_{\ln}(t)u_0(x)
+
\int_0^t
S_{\ln}(t-s)
\bigl(f(u_2(s,\cdot))\bigr)(x)\,ds
\\
&\ge
\mu_1S_{\ln}(t)u_0(x)
+
\int_0^t
S_{\ln}(t-s)
\bigl(f(u_2(s,\cdot))\bigr)(x)\,ds.
\end{aligned}
\]
Thus \(u_2\) is an integral supersolution for the problem with initial
datum \(\mu_1u_0\). By
Lemma~\ref{lem:existence-below-supersolution}, there exists a unique
nonnegative mild solution \(u_1\) on \((0,T)\) such that
\[
\mu_1S_{\ln}(t)u_0(x)
\le
u_1(t,x)
\le
u_2(t,x).
\]
Consequently, $T\in\mathcal E_{\mu_1}.$ Hence $\mathcal E_{\mu_2}\subset\mathcal E_{\mu_1},$
and therefore $T_{\mu_1,f}
=
\sup\mathcal E_{\mu_1}
\ge
\sup\mathcal E_{\mu_2}
=
T_{\mu_2,f}.$
\end{proof}

Now, we proceed to prove the aforementioned local well-posedness and lifespan properties.

\begin{proof}[\textbf{Proof of Theorem~\ref{thm:local-wellposedness}.}]

Set $H(t,x):=S_{\ln}(t)u_0(x).$
We first prove local existence. Choose $\tau_0\in(0,T_{\mu,0}).$
By Definition~\ref{def:linear-lifespan}, $M_0:=
\sup_{0<t<\tau_0}
\|H(t)\|_{L^\infty(\mathbb R^N)}
<\infty.$
Since \(f\) is locally Lipschitz continuous and \(f(0)=0\), there
exists \(L>0\) such that
\[
|f(a)-f(b)|
\le
L|a-b|
\qquad
\text{for all }a,b\in[0,2\mu M_0].
\]
Choose \(\tau\in(0,\tau_0]\) sufficiently small that $L\tau\le\frac12.$ Since \(H(t,x):=S_{\ln}(t)u_0(x)>0\), the map
\[
J:\mathcal X_{\tau,u_0}\longrightarrow
L^\infty\bigl((0,\tau)\times\mathbb R^N\bigr),
\qquad
Jv:=\frac{v}{H},
\]
is an isometric isomorphism onto the space of bounded measurable
functions. In particular, \(\mathcal X_{\tau,u_0}\) is a Banach space. 
Consider the closed subset
\[
\mathcal B_\tau
:=
\left\{
v\in\mathcal X_{\tau,u_0}:
v\ge0,\ 
\|v\|_{S_{\ln}(\cdot)u_0,\tau}\le2\mu
\right\}.
\]
For \(v\in\mathcal B_\tau\), define
\[
(\mathcal Tv)(t,x)
:=
\mu H(t,x)
+
\int_0^t
S_{\ln}(t-s)\bigl(f(v(s,\cdot))\bigr)(x)\,ds.
\]
Since $0\le v(t,x)\le2\mu H(t,x)\le2\mu M_0,$
we have $0\le f(v(t,x))\le Lv(t,x)\le2\mu L H(t,x).$
Using positivity and the semigroup property, we obtain
\[
\begin{aligned}
(\mathcal Tv)(t,x)
&\le
\mu H(t,x)
+
2\mu L
\int_0^t
S_{\ln}(t-s)H(s,\cdot)(x)\,ds
\\
&=
\mu H(t,x)+2\mu LtH(t,x)\le
2\mu H(t,x).
\end{aligned}
\]
Thus, \(\mathcal T\) maps \(\mathcal B_\tau\) into itself. For \(v,w\in\mathcal B_\tau\), we similarly find
\[
\begin{aligned}
|\mathcal Tv(t,x)-\mathcal Tw(t,x)|
&\le
L\int_0^t
S_{\ln}(t-s)|v(s,\cdot)-w(s,\cdot)|(x)\,ds
\\
&\le
Lt
\|v-w\|_{S_{\ln}(\cdot)u_0,\tau}
H(t,x).
\end{aligned}
\]
Consequently,
\[
\|\mathcal Tv-\mathcal Tw\|_{S_{\ln}(\cdot)u_0,\tau}
\le
L\tau
\|v-w\|_{S_{\ln}(\cdot)u_0,\tau}.
\]
Hence \(\mathcal T\) is a contraction on \(\mathcal B_\tau\), and the
Banach fixed-point theorem yields a nonnegative mild solution on
\((0,\tau)\). In particular, $T_{\mu,f}>0.$

By Lemma~\ref{lem:mild-solution-uniqueness}, mild solutions defined on
different existence intervals coincide on their common domains. They
can therefore be patched together to yield a unique nonnegative mild
solution \(u_\mu\) on $(0,T_{\mu,f})\times\mathbb R^N.$
By definition, $0<T_{\mu,f}\le T_{\mu,0}.$

Fix now \(\tau\in(0,T_{\mu,f})\). By the definition of
\(T_{\mu,f}\), the solution satisfies $u_\mu|_{(0,\tau)\times\mathbb R^N}\in\mathcal X_{\tau,u_0}.$
Since $\sup_{0<t<\tau}
\|H(t)\|_{L^\infty(\mathbb R^N)}
<\infty,$
it follows that $u_\mu\in
L^\infty\bigl((0,\tau)\times\mathbb R^N\bigr).$
Moreover, Lemmas~\ref{lem:linear-flow-continuity} and Lemma
\ref{lem:duhamel-term-continuity} imply that $u_\mu\in
C\bigl((0,T_{\mu,f})\times\mathbb R^N\bigr).$

\textbf{We next prove the assertion 1.}  Suppose that $\limsup_{s\to\infty}\frac{f(s)}s<\infty.$
Since \(f\) is locally Lipschitz continuous and \(f(0)=0\), there exists
\(\rho_0>0\) such that
\begin{equation}\label{eq:global-linear-growth-bound}
f(s)\le\rho_0s
\qquad
\text{for every }s\ge0.
\end{equation}
Fix \(T\in(0,T_{\mu,0})\), and define
\[
\overline U_\mu(t,x)
:=
\mu e^{\rho_0t}H(t,x),
\qquad
(t,x)\in(0,T)\times\mathbb R^N.
\]
By \eqref{eq:global-linear-growth-bound}, positivity, and the semigroup
property,
\[
\begin{aligned}
&\mu H(t,x)
+
\int_0^t
S_{\ln}(t-s)
\bigl(f(\overline U_\mu(s,\cdot))\bigr)(x)\,ds
\\
&\le
\mu H(t,x)
+
\rho_0\mu
\int_0^t
e^{\rho_0s}
S_{\ln}(t-s)H(s,\cdot)(x)\,ds
\\
&=
\mu H(t,x)
+
\rho_0\mu H(t,x)
\int_0^t e^{\rho_0s}\,ds
\\
&=
\mu e^{\rho_0t}H(t,x)
=
\overline U_\mu(t,x).
\end{aligned}
\]
Thus, \(\overline U_\mu\) is an integral supersolution on \((0,T)\).
By Lemma~\ref{lem:existence-below-supersolution},
\eqref{eq:main-cauchy-problem} admits a nonnegative mild solution on
\((0,T)\). Since \(T<T_{\mu,0}\) is arbitrary, we conclude that $T_{\mu,f}\ge T_{\mu,0}.$
Together with \(T_{\mu,f}\le T_{\mu,0}\), this gives $T_{\mu,f}=T_{\mu,0}.$

\smallskip
\textbf{ Finally, we prove assertion 2.} Fix $0<h<\frac{N}{6}$ and set $t_0:=2h,t_1:=3h.$
Let \(K=\overline{B_R(x_0)}\subset\mathbb R^N\) be a fixed closed
ball. It is sufficient to prove that there exists \(\mu_h>0\) such that
\begin{equation}\label{eq:h-dependent-lifespan-bound}
T_{\mu,f}\le 3h
\qquad\text{for every }\mu\ge\mu_h.
\end{equation}

By the compact-set lower estimate \eqref{closed-ineq-ball}, there exist
constants \(c_{K,h}>0\) and \(C_{K,h}>0\), independent of \(\mu\),
such that the following holds. If \(u\) is a nonnegative mild solution
corresponding to the initial datum \(\mu u_0\) on \((0,T)\), then,
upon setting
\[
m_\mu(t):=\inf_{x\in K}u(t,x),
\qquad 0<t<T,
\]
we have
\begin{equation}\label{eq:closed-ineq-ball-mu}
m_\mu(t)
\ge
\frac{\mu c_{K,h}}{N-2t}
+
C_{K,h}
\int_{t-h}^{t}f(m_\mu(s))\,ds\quad \text{for every} \quad t\in [h,\frac{N}{2}).
\end{equation}

Set $a_{K,h}:=\frac{c_{K,h}}{N}>0,$ we have $\frac{\mu c_{K,h}}{N-2t}
\ge a_{K,h}\mu.$ Consequently, whenever the solution exists
beyond \(t_1\), \eqref{eq:closed-ineq-ball-mu} yields
\begin{equation}\label{eq:ode-comparison-large-data}
m_\mu(t)
\ge
a_{K,h}\mu
+
C_{K,h}
\int_{t_0}^{t}f(m_\mu(s))\,ds,
\qquad
t_0\le t\le t_1.
\end{equation}

Since \(f\) is nonnegative and nondecreasing,
\eqref{eq:osgood-condition-large} implies that $f(s)>0$ for every $s>s_*.$ For \(\mu>0\), write $q_\mu:=a_{K,h}\mu$ and $C:=C_{K,h}.$
Choose \(\mu_h>0\) sufficiently large that $q_{\mu_h}>s_*$
and
\[\frac1C
\int_{q_{\mu_h}}^\infty\frac{d\sigma}{f(\sigma)}
<h.\]

Fix \(\mu\ge\mu_h\), and define
\begin{equation}\label{eq:scalar-blow-up-time}
\tau_\mu
:=
t_0+
\frac1C
\int_{q_\mu}^\infty\frac{d\sigma}{f(\sigma)}.
\end{equation}
Then $t_0<\tau_\mu<t_0+h=t_1.$

We claim that \(T_{\mu,f}\le t_1\). Suppose, to the contrary, that
\(T_{\mu,f}>t_1\). Then there exist \(T>t_1\) and a nonnegative mild
solution \(u\) on \((0,T)\). Define \(m_\mu\) as above and set
\[
w_\mu(t)
:=
q_\mu+
C\int_{t_0}^{t}f(m_\mu(s))\,ds,
\qquad
t_0\le t\le t_1.
\]
By \eqref{eq:ode-comparison-large-data}, $q_\mu\le w_\mu(t)\le m_\mu(t).$
The function \(w_\mu\) is locally absolutely continuous and satisfies,
for a.e. \(t\in[t_0,t_1]\),
\[
w_\mu'(t)
=
C f(m_\mu(t))
\ge
C f(w_\mu(t)).
\]

Define
\[
H_\mu(r)
:=
\int_{q_\mu}^{r}\frac{d\sigma}{f(\sigma)},
\qquad r\ge q_\mu.
\]
Since \(q_\mu>s_*\), we have \(f>0\) on \([q_\mu,\infty)\).
Therefore, for a.e. \(t\in[t_0,t_1]\),
\[
\frac{d}{dt}H_\mu(w_\mu(t))
=
\frac{w_\mu'(t)}{f(w_\mu(t))}
\ge C.
\]
Integrating from \(t_0\) to \(t\), we obtain
\begin{equation}\label{eq:H-lower-bound}
\int_{q_\mu}^{w_\mu(t)}
\frac{d\sigma}{f(\sigma)}
\ge
C(t-t_0).
\end{equation}

Letting \(t\uparrow\tau_\mu\) in
\eqref{eq:H-lower-bound} and using
\eqref{eq:scalar-blow-up-time}, we conclude that
\[
w_\mu(t)\longrightarrow+\infty
\qquad\text{as }t\uparrow\tau_\mu.
\]
It follows from the definition of \(w_\mu\) that
\begin{equation}\label{eq:nonlinear-integral-diverges}
\int_{t_0}^{\tau_\mu}f(m_\mu(s))\,ds
=
+\infty.
\end{equation}

Since \(\tau_\mu<t_0+h\), we have $[t_0,\tau_\mu]\subset[\tau_\mu-h,\tau_\mu].$
Moreover, \(\tau_\mu<t_1<T\), so
\eqref{eq:closed-ineq-ball-mu} can be applied at \(t=\tau_\mu\).
Using \eqref{eq:nonlinear-integral-diverges}, we obtain
\[
\begin{aligned}
m_\mu(\tau_\mu)
&\ge
\frac{\mu c_{K,h}}{N-2\tau_\mu}
+
C_{K,h}
\int_{\tau_\mu-h}^{\tau_\mu}
f(m_\mu(s))\,ds
\\
&\ge
q_\mu+
C
\int_{t_0}^{\tau_\mu}f(m_\mu(s))\,ds
=
+\infty.
\end{aligned}
\]
This contradicts the finiteness of the mild solution at
\(\tau_\mu<T\). Hence
\[
T_{\mu,f}\le t_1=3h
\qquad\text{for every }\mu\ge\mu_h,
\]
which proves \eqref{eq:h-dependent-lifespan-bound}.
\end{proof}

\subsection{Continuation criterion and terminal-time estimates}

The following lemma shows that a sufficiently large lower bound on a
fixed ball forces finite-time blow-up, with the remaining lifespan
controlled by the Osgood tail \(\Phi_f\), where
\[\Phi_f(\rho):=
\int_\rho^\infty\frac{d\sigma}{f(\sigma)},
\qquad \rho>0.
\]

\begin{lemma}
\label{lem:local-osgood-trigger}
Assume that \((\mathcal U_\alpha)\) and \((\mathcal F)\) hold.
Let \(\mu>0\), and let \(u_\mu\) be the unique nonnegative mild solution
on \((0,T_{\mu,f})\). For every \(R>0\),
there exist constants \(c_R,C_R>0\), depending only on \(N\) and \(R\),
such that the following assertion holds.

Suppose that, for some \(t_0\in[0,T_{\mu,f})\),
\(x_0\in\mathbb R^N\), and \(M>0\),
\begin{equation}\label{eq:local-osgood-trigger-assumption}
u_\mu(t_0,x)\ge M
\qquad
\text{for every }x\in B_R(x_0),
\end{equation}
where, when \(t_0=0\), \(u_\mu(0,\cdot)\) is understood as the initial
datum \(\mu u_0\). Then
\begin{equation}\label{eq:local-osgood-trigger-conclusion}
T_{\mu,f}
\le
t_0+C_R\Phi_f(c_RM).
\end{equation}
\end{lemma}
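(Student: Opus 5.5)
The plan is to adapt the ODE-comparison argument from assertion 2 of Theorem~\ref{thm:local-wellposedness}, but to restart from time \(t_0\) instead of time \(0\). We may assume \(\Phi_f(c_RM)<\infty\); otherwise there is nothing to prove. Set \(K:=\overline{B_{R/2}(x_0)}\) (or \(B_R(x_0)\) itself, after a harmless shrinking) and \(m(t):=\inf_{x\in K}u_\mu(t,x)\) for \(t\in[t_0,T_{\mu,f})\).

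\textbf{Step 1: a lower integral inequality starting at \(t_0\).} For \(t>t_0\), the semigroup property together with the mild formulation gives the restarted representation
\[
u_\mu(t,x)=S_{\ln}(t-t_0)\bigl(u_\mu(t_0,\cdot)\bigr)(x)+\int_{t_0}^tS_{\ln}(t-s)\bigl(f(u_\mu(s,\cdot))\bigr)(x)\,ds,
\]
with the convention \(u_\mu(0,\cdot)=\mu u_0\) when \(t_0=0\). To justify this, split \(\int_0^t=\int_0^{t_0}+\int_{t_0}^t\) and use Tonelli's theorem together with \(S_{\ln}(t-t_0)S_{\ln}(t_0-s)=S_{\ln}(t-s)\) on nonnegative functions. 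Dropping everything outside \(B_R(x_0)\) and using \eqref{eq:local-osgood-trigger-assumption} yields
\[
S_{\ln}(t-t_0)\bigl(u_\mu(t_0,\cdot)\bigr)(x)\ge M\int_{B_R(x_0)}\mathcal P_{\ln}(t-t_0,x-y)\,dy.
\]
Applying the cone argument from Step~2 of Lemma~\ref{prop:lowbound} (estimate \eqref{kernel-ball-lb}) for \(x\in K\) and \(0<\tau\le \tau_R\), we obtain \(\int_{B_R(x_0)}\mathcal P_{\ln}(\tau,x-y)\,dy\ge\kappa_R\). Here \(\tau_R>0\) and \(\kappa_R>0\) depend only on \(N\) and \(R\), because by \eqref{eq:zero} the quantity \(\mathcal P_0(\tau)(R/2)^{2\tau}/(2\tau)\) is bounded below for small \(\tau\). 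The same bound applied to the Duhamel term with \(f\) nondecreasing gives
\[
m(t)\ge \kappa_RM+\kappa_R\int_{t_0}^tf(m(s))\,ds,\qquad t_0< t<\min\{T_{\mu,f},t_0+\tau_R\}.
\]

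\textbf{Step 2: ODE comparison.} Put \(c_R:=\kappa_R\) and \(w(t):=c_RM+\kappa_R\int_{t_0}^tf(m)\,ds\le m(t)\). Then \(w'\ge\kappa_Rf(w)\) almost everywhere. Exactly as in \eqref{eq:H-lower-bound}, finiteness of \(\Phi_f(c_RM)\) forces \(f>0\) on \([c_RM,\infty)\), and integrating gives \(\int_{c_RM}^{w(t)}d\sigma/f(\sigma)\ge\kappa_R(t-t_0)\). Hence \(w\) and therefore \(\int_{t_0}^t f(m)\,ds\) become infinite by the time \(t_0+\kappa_R^{-1}\Phi_f(c_RM)\). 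Evaluating the inequality of Step~1 at a slightly later time then produces \(m=+\infty\), which contradicts the boundedness of \(u_\mu\) on \((0,\tau)\times\mathbb R^N\) for \(\tau<T_{\mu,f}\) from Theorem~\ref{thm:local-wellposedness}. We conclude that \(T_{\mu,f}\le t_0+C_R\Phi_f(c_RM)\) with \(C_R:=\kappa_R^{-1}\), provided \(\kappa_R^{-1}\Phi_f(c_RM)<\tau_R\).

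\textbf{Step 3: the main obstacle.} The difficulty is the restriction \(t-t_0\le\tau_R\), which limits how long the kernel mass bound remains available. The comparison only covers windows of length \(\tau_R\), so the case where \(\kappa_R^{-1}\Phi_f(c_RM)\ge\tau_R\) still has to be handled. One option is to note that \(\kappa\) can in fact be taken uniform for \(\tau\) in any compact subset of \((0,N/2)\), and since \(T_{\mu,f}\le N/2\) this lets \(\tau_R\) be essentially \(N/2\). A cleaner option is to enlarge \(C_R\): choose \(C_R\ge \max\{\kappa_R^{-1},\,N/(2\tau_R)\}\) (note that \(\Phi_f(c_RM)\) can only be small if \(M\) is large). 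Then whenever \(\kappa_R^{-1}\Phi_f(c_RM)\ge\tau_R\), we have \(t_0+C_R\Phi_f(c_RM)\ge N/2\ge T_{\mu,f}\) once \(\Phi_f(c_RM)\ge\tau_R\kappa_R\), so the claimed bound holds trivially. To make this airtight, I would define \(C_R:=\max\{\kappa_R^{-1},N/(2\tau_R\kappa_R)\}\). With this choice, either the comparison applies inside the window of Step~1, or the bound \eqref{eq:local-osgood-trigger-conclusion} exceeds \(N/2\) and is automatic.
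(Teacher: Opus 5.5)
Your proposal is correct and follows the paper's argument essentially verbatim: you restart the Duhamel formula at $t_0$, use a uniform local kernel-mass bound $\kappa_R$ on the ball, and run the scalar Osgood comparison with $c_R=\kappa_R$ and $C_R=\kappa_R^{-1}$. The only difference is the small-time window $\tau_R$ and the case distinction in Step~3, which are unnecessary: $\mathcal P_0(\theta)(R/2)^{2\theta}/(2\theta)$ has a positive limit as $\theta\to0^+$ by \eqref{eq:zero} and tends to $+\infty$ as $\theta\to N/2$ by \eqref{eq:n2}, so the paper takes $\kappa_R$ uniform for all $\theta\in(0,N/2)$, although your enlargement of $C_R$ is also a valid fix.
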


\begin{proof}
Fix \(R>0\). Arguing as in the proof of Lemma~\ref{prop:lowbound}, we find a constant
\(\kappa_R>0\), depending only on \(N\) and \(R\), such that
\begin{equation}\label{eq:uniform-local-kernel-mass}
\inf_{x\in B_R(x_0)}
S_{\ln}(\theta)\mathbf 1_{B_R(x_0)}(x)
\ge
\kappa_R
\end{equation}
for every \(x_0\in\mathbb R^N\) and every
\(\theta\in(0,N/2)\).

For \(0\le \tau <T_{\mu,f}-t_0\), define $m(\tau)
:=
\inf_{x\in B_R(x_0)}
u_\mu(t_0+\tau,x).$
By the restarting property of mild solutions, for
\(x\in B_R(x_0)\) and \(0<\tau<T_{\mu,f}-t_0\),
\[
\begin{aligned}
u_\mu(t_0+\tau,x)
={}&
S_{\ln}(\tau)u_\mu(t_0,\cdot)(x)+
\int_0^\tau
S_{\ln}(\tau-s)
\bigl(f(u_\mu(t_0+s,\cdot))\bigr)(x)\,ds.
\end{aligned}
\]
When \(t_0=0\), this identity is simply the original mild formulation.

By \eqref{eq:local-osgood-trigger-assumption} and
\eqref{eq:uniform-local-kernel-mass},
\[
S_{\ln}(\tau)u_\mu(t_0,\cdot)(x)
\ge
M S_{\ln}(\tau)\mathbf 1_{B_R(x_0)}(x)
\ge
\kappa_RM.
\]
Moreover, since \(f\) is nondecreasing, $f(u_\mu(t_0+s,y))
\ge
f(m(s))$ for $y\in B_R(x_0).$
Therefore,
\[
\begin{aligned}
&S_{\ln}(\tau-s)
\bigl(f(u_\mu(t_0+s,\cdot))\bigr)(x)\ge
f(m(s))
S_{\ln}(\tau-s)\mathbf 1_{B_R(x_0)}(x)
\ge
\kappa_R f(m(s)).
\end{aligned}
\]
Taking the infimum over \(x\in B_R(x_0)\), we obtain
\begin{equation}\label{eq:local-osgood-scalar-inequality}
m(\tau)
\ge
\kappa_RM
+
\kappa_R\int_0^\tau f(m(s))\,ds,
\qquad
0<\tau<T_{\mu,f}-t_0.
\end{equation}

Set $q:=\kappa_RM.$
If \(\Phi_f(q)=\infty\), then
\eqref{eq:local-osgood-trigger-conclusion} is immediate. We may
therefore assume that $\Phi_f(q)<\infty.$
In particular, \(f>0\) on \([q,\infty)\). Define
\[
w(\tau)
:=
q+\kappa_R\int_0^\tau f(m(s))\,ds,
\qquad
0\le\tau<T_{\mu,f}-t_0.
\]
By \eqref{eq:local-osgood-scalar-inequality}, $q\le w(\tau)\le m(\tau).$
Furthermore, \(w\) is locally absolutely continuous and satisfies $w'(\tau)
=
\kappa_R f(m(\tau))
\ge
\kappa_R f(w(\tau))$
for almost every \(\tau\in(0,T_{\mu,f}-t_0)\). Hence
\[
\frac{d}{d\tau}
\left(
\int_q^{w(\tau)}
\frac{d\sigma}{f(\sigma)}
\right)
=
\frac{w'(\tau)}{f(w(\tau))}
\ge
\kappa_R
\]
for almost every \(\tau\in(0,T_{\mu,f}-t_0)\). Integrating from \(0\) to
\(\tau\), we find
\begin{equation}\label{eq:local-osgood-integrated-comparison}
\int_q^{w(\tau)}
\frac{d\sigma}{f(\sigma)}
\ge
\kappa_R\tau.
\end{equation}

Set $\tau_*:=\frac{1}{\kappa_R}\Phi_f(q).$
Suppose, by contradiction, that $T_{\mu,f}-t_0>\tau_*.$
Letting \(\tau\uparrow\tau_*\) in
\eqref{eq:local-osgood-integrated-comparison}, we obtain $w(\tau)\rightarrow\infty$ as  $\tau\uparrow\tau_*.$
Since \(m(\tau)\ge w(\tau)\), it follows that $m(\tau)\rightarrow\infty$ as $\tau\uparrow\tau_*.$
This contradicts the local boundedness of \(u_\mu\), because
\(t_0+\tau_*<T_{\mu,f}\). Consequently,
\[
T_{\mu,f}-t_0
\le
\frac{1}{\kappa_R}\Phi_f(\kappa_RM).
\]
Taking $c_R:=\kappa_R$ and $C_R:=\frac{1}{\kappa_R},$
we obtain \eqref{eq:local-osgood-trigger-conclusion}.
\end{proof}

We are now ready to prove the continuation criterion and the
terminal-time estimates.

\begin{proof}[\textbf{Proof of Theorem~\ref{thm:continuation-terminal-estimates}.}]
\noindent\textbf{Proof of the first assertion.}
Set \(T:=T_{\mu,f}<T_{\mu,0}\) and \(H(t,x):=S_{\ln}(t)u_0(x)\).
Since $M_*:=\sup_{0<t<T}\|H(t,\cdot)\|_{L^\infty(\mathbb R^N)}<\infty,$
we have
\[
\|u_\mu\|_{L^\infty((0,\tau)\times\mathbb R^N)}
\le
M_*\|u_\mu\|_{H,\tau},
\qquad 0<\tau<T.
\]
Conversely, suppose that $M:=\|u_\mu\|_{L^\infty((0,T)\times\mathbb R^N)}<\infty.$
Since \(f(0)=0\) and \(f\) is locally Lipschitz, there exists \(L_M>0\)
such that \(f(s)\le L_Ms\) for \(0\le s\le M\). Setting
\[
q(t):=\sup_{x\in\mathbb R^N}\frac{u_\mu(t,x)}{H(t,x)},
\]
the mild formulation and the semigroup property give $q(t)
\le
\mu+L_M\int_0^t q(s)\,ds,$
and hence, by Gronwall's inequality, $q(t)\le \mu e^{L_Mt}\le \mu e^{L_MT}.$
Thus $\sup_{0<\tau<T}\|u_\mu\|_{H,\tau}<\infty.$
Therefore boundedness in the uniform norm is equivalent to boundedness
in the natural profile norm up to \(T\). 

It remains to prove that these
norms necessarily diverge as \(\tau\uparrow T\). Suppose, to the contrary, that
\[
K:=
\sup_{\substack{0<t<T\\ x\in\mathbb R^N}}
\frac{u_\mu(t,x)}{H(t,x)}
<\infty.
\]

Because \(T<T_{\mu,0}\), we may choose \(\delta_0>0\) such that $T+2\delta_0<T_{\mu,0}.$
By the definition of the linear lifespan,
\[
M:=
\sup_{0<t<T+2\delta_0}
\|H(t)\|_{L^\infty(\mathbb R^N)}
<\infty.
\]
Let \(L>0\) be a Lipschitz constant of \(f\) on the interval
\([0,2KM]\). Since \(f(0)=0\) and \(f\) is nondecreasing,
\[
0\le f(a)\le La
\qquad
\text{for every }a\in[0,2KM].
\]
Choose \(\delta\in(0,\delta_0]\) sufficiently small that $L\delta<\frac12.$

Fix $t_0\in(T-\delta/2,T).$
On \([t_0,t_0+\delta)\times\mathbb R^N\), consider the weighted space
\[
\mathcal Y_{t_0,\delta}
:=
\left\{
v:
\|v\|_{\mathcal Y_{t_0,\delta}}
:=
\sup_{\substack{t_0\le t<t_0+\delta\\ x\in\mathbb R^N}}
\frac{|v(t,x)|}{H(t,x)}
<\infty
\right\}
\]
and its closed positive ball $\mathcal B
:=
\left\{
v\in\mathcal Y_{t_0,\delta}:
v\ge0,\ 
\|v\|_{\mathcal Y_{t_0,\delta}}\le2K
\right\}.$
Define
\[
(\mathcal Tv)(t,x)
:=
S_{\ln}(t-t_0)u_\mu(t_0,\cdot)(x)
+
\int_{t_0}^t
S_{\ln}(t-s)\bigl(f(v(s,\cdot))\bigr)(x)\,ds.
\]

The assumed profile bound and the semigroup property give
\[
\begin{aligned}
S_{\ln}(t-t_0)u_\mu(t_0,\cdot)(x)
&\le
K S_{\ln}(t-t_0)H(t_0,\cdot)(x)=
K H(t,x).
\end{aligned}
\]
Moreover, if \(v\in\mathcal B\), then $0\le v(t,x)\le2K H(t,x)\le2KM,$
and hence
\[
f(v(t,x))\le Lv(t,x)\le2KLH(t,x).
\]
Using positivity and the semigroup property once more, we obtain
\[
\begin{aligned}
\int_{t_0}^t
S_{\ln}(t-s)\bigl(f(v(s,\cdot))\bigr)(x)\,ds
&\le
2KL\int_{t_0}^t
S_{\ln}(t-s)H(s,\cdot)(x)\,ds=
2KL(t-t_0)H(t,x).
\end{aligned}
\]
Therefore, $\|\mathcal Tv\|_{\mathcal Y_{t_0,\delta}}
\le
K+2KL\delta
\le2K,$
so that \(\mathcal T\) maps \(\mathcal B\) into itself.

For \(v,w\in\mathcal B\), the Lipschitz continuity of \(f\) yields
\[
\begin{aligned}
|\mathcal Tv(t,x)-\mathcal Tw(t,x)|
&\le
L\int_{t_0}^t
S_{\ln}(t-s)|v(s,\cdot)-w(s,\cdot)|(x)\,ds
\\
&\le
L(t-t_0)
\|v-w\|_{\mathcal Y_{t_0,\delta}}H(t,x).
\end{aligned}
\]
Consequently, $\|\mathcal Tv-\mathcal Tw\|_{\mathcal Y_{t_0,\delta}}
\le
L\delta\,
\|v-w\|_{\mathcal Y_{t_0,\delta}}.$
Thus \(\mathcal T\) is a contraction on \(\mathcal B\), and there exists
a unique fixed point \(v\in\mathcal B\).

For \(t\in[t_0,t_0+\delta)\), this fixed point satisfies
\[
v(t)
=
S_{\ln}(t-t_0)u_\mu(t_0)
+
\int_{t_0}^t
S_{\ln}(t-s)f(v(s))\,ds.
\]
Combining this identity with the mild formulation for \(u_\mu(t_0)\)
and using the semigroup property, we see that the function
\[
\widetilde u(t,x)
:=
\begin{cases}
u_\mu(t,x),
&0<t\le t_0,\\
v(t,x),
&t_0<t<t_0+\delta,
\end{cases}
\]
is a nonnegative mild solution of
\eqref{eq:main-cauchy-problem} on \((0,t_0+\delta)\). By uniqueness,
\(v=u_\mu\) on the overlap \((t_0,T)\).

Since \(t_0>T-\delta/2\), we have $t_0+\delta>T,$
and hence \(\widetilde u\) extends \(u_\mu\) beyond its maximal existence
time \(T=T_{\mu,f}\), which is a contradiction. Therefore,
\eqref{eq:profile-blowup-alternative} holds.

\medskip
\noindent\textbf{Proof of the third assertion.}
Let \(c_{2r},C_{2r}>0\) be the constants given by
Lemma~\ref{lem:local-osgood-trigger} for balls of radius \(2r\).
Choose \(\tau_2\in(0,T_{\mu,f})\) sufficiently small that
\begin{equation}\label{eq:terminal-tau2-choice}
\frac{\tau_2}{C_{2r}}
\le
\Phi_f(1).
\end{equation}

Fix $T_{\mu,f}-\tau_2<t<T_{\mu,f}$
and set $M:=m_r(t)
=
\inf_{x\in B_{2r}(x_0)}u_\mu(t,x).$
Then
\[
u_\mu(t,x)\ge M
\qquad
\text{for every }x\in B_{2r}(x_0).
\]

Suppose first that \(c_{2r}M\ge1\). Applying
Lemma~\ref{lem:local-osgood-trigger} at time \(t\), we obtain $T_{\mu,f}
\le
t+C_{2r}\Phi_f(c_{2r}M),$
and hence
\begin{equation}\label{eq:general-terminal-phi-bound}
\Phi_f(c_{2r}M)
\ge
\frac{T_{\mu,f}-t}{C_{2r}}.
\end{equation}
By the definition of \(\Psi_f\), it follows that $c_{2r}M
\le
\Psi_f\left(
\frac{T_{\mu,f}-t}{C_{2r}}
\right),$
so that
\begin{equation}\label{eq:terminal-bound-large-M}
M
\le
\frac1{c_{2r}}
\Psi_f\left(
\frac{T_{\mu,f}-t}{C_{2r}}
\right).
\end{equation}

Suppose now that \(c_{2r}M<1\). By
\eqref{eq:terminal-tau2-choice}, $\frac{T_{\mu,f}-t}{C_{2r}}
<
\frac{\tau_2}{C_{2r}}
\le
\Phi_f(1).$
Hence $\Psi_f\left(
\frac{T_{\mu,f}-t}{C_{2r}}
\right)
\ge1,$
and therefore
\[
M
<
\frac1{c_{2r}}
\le
\frac1{c_{2r}}
\Psi_f\left(
\frac{T_{\mu,f}-t}{C_{2r}}
\right).
\]

Combining the two cases, we conclude that $m_r(t)
\le
\frac1{c_{2r}}
\Psi_f\left(
\frac{T_{\mu,f}-t}{C_{2r}}
\right)$
whenever $T_{\mu,f}-\tau_2<t<T_{\mu,f}.$
Thus \eqref{eq:general-terminal-osgood-upper-bound} holds with $c_2:=\frac1{c_{2r}}$ and $c_3:=\frac1{C_{2r}}.$

\medskip
\noindent\textbf{Proof of the second assertion.}

Set $K:=\overline{B_{2r}(x_0)},$ since \(u_\mu(t,\cdot)\) is
continuous for every \(t\in(0,T_{\mu,f})\), we have $m_r(t)
=
\inf_{x\in K}u_\mu(t,x).$
Applying Lemma~\ref{prop:lowbound} and discarding the nonnegative
integral term, we obtain the desired result.
\end{proof}

\section{Blow-Up and Threshold Behavior in Noncritical Regimes}

In this section, we consider the slow-decay and fast-decay regimes,
for which the corresponding linear solution has a first-order singular
growth near its terminal time. We first prove that the weighted Osgood tail condition at infinity forces premature blow-up for every initial amplitude
\(\mu>0\). We then show that nonlinearities of at most quadratic growth
exhibit a threshold phenomenon.

To construct global-in-time supersolutions for sufficiently small
initial amplitudes, we first establish precise upper bounds for the
linear evolution in the fast and slow decay regimes. These estimates
describe simultaneously the terminal growth and the time-dependent
spatial decay of \(S_{\ln}(t)u_0\), and will be the basic input for
controlling the nonlinear Duhamel term.

\begin{lemma}\label{lem:linear-profile-fast-decay}
Assume that \(u_0\) satisfies $(\mathcal{U}_\alpha)$. Then the following
statements hold.

\begin{enumerate}
\item[(i)] Suppose that
\eqref{eq:initial-fast-decay-superlinear} holds for some
\(\alpha>N\), and set
\[
M_\alpha(u_0)
:=
\sup_{x\in\mathbb R^N}
(1+|x|)^\alpha u_0(x).
\]
Define
\begin{equation}\label{phitxn}
    \Phi_N(t,x)
:=
\frac{1}{N-2t}
(1+|x|)^{-(N-2t)},
\qquad
0<t<\frac N2.
\end{equation}
Then there exists a constant \(C_N>0\), depending only on
\(N\), \(\alpha\), and \(M_\alpha(u_0)\), such that
\begin{equation}\label{eq:linear-profile-fast-decay}
S_{\ln}(t)u_0(x)
\le
C_N\Phi_N(t,x)
\end{equation}
for every \(0<t<N/2\) and \(x\in\mathbb R^N\). In particular, $T_{\mu,0}=\frac N2$ for every $\mu>0.$

\item[(ii)]  Suppose that  \eqref{eq:initial-slow-decay-lower} holds
for some \(\alpha\in(0,N)\). Define
\begin{equation}\label{phitxalpha}
    \Phi_\alpha(t,x)
:=
\frac{1}{\alpha-2t}
(1+|x|)^{-(\alpha-2t)},
\qquad
0<t<\frac{\alpha}{2}.
\end{equation}
Then there exists a constant \(C_\alpha>0\), depending only on
\(N,\alpha\) and \(u_0\), such that
\begin{equation}\label{eq:linear-profile-slow-decay}
S_{\ln}(t)u_0(x)
\le
C_\alpha\Phi_\alpha(t,x)
\end{equation}
for every \(0<t<\alpha/2\) and \(x\in\mathbb R^N\).  In particular, $T_{\mu,0}=\frac \alpha2$ for every $\mu>0.$
\end{enumerate}
\end{lemma}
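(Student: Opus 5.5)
The plan is to reduce both parts to a single pointwise estimate of the Riesz-type convolution
\[
I_t(x):=\int_{\mathbb R^N}|x-y|^{2t-N}(1+|y|)^{-\beta}\,dy ,
\]
with $\beta=\alpha$ in both cases. In case (i) we use $u_0(y)\le M_\alpha(u_0)(1+|y|)^{-\alpha}$ with $\alpha>N$, and in case (ii) the upper half of \eqref{eq:initial-slow-decay-lower} together with $u_0\in L^\infty_\alpha$. Then $S_{\ln}(t)u_0(x)\le C\,\mathcal P_0(t)I_t(x)$. Note that $M_\alpha(u_0)<\infty$ in (i) follows from \eqref{eq:initial-fast-decay-superlinear} and local boundedness; if necessary one works with $\alpha$ replaced by some $\alpha'\in(N,\alpha]$, since only $\alpha>N$ matters. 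The key control of the coefficient is $\mathcal P_0(t)\le C\min\{t,1\}\cdot\frac{N}{N-2t}$, which follows from \eqref{eq:zero} and \eqref{eq:n2}.

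For the convolution, set $\mathcal R=1+|x|$ and split $\mathbb R^N$ exactly as in Lemma~\ref{lem:weighted-convolution} into $\Omega_1=\{|x-y|\le\mathcal R/2\}$, $\Omega_2=\{|x-y|>\mathcal R/2,\ |y|\le 2\mathcal R\}$ and $\Omega_3=\{|y|>2\mathcal R\}$.

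On $\Omega_1$ we have $1+|y|\gtrsim\mathcal R$, so
\[
I_1\lesssim \mathcal R^{-\beta}\int_{|z|\le \mathcal R/2}|z|^{2t-N}\,dz\sim \frac{\mathcal R^{2t-\beta}}{2t}.
\]
The factor $1/t$ is absorbed by $\mathcal P_0(t)\lesssim t$ for small $t$, and it is harmless for $t$ bounded away from $0$.

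On $\Omega_2$,
\[
I_2\lesssim \mathcal R^{2t-N}\int_{|y|\le 2\mathcal R}(1+|y|)^{-\beta}\,dy .
\]
In case (i) this integral is bounded by a constant, giving $\mathcal R^{-(N-2t)}$. In case (ii) it is $\lesssim \mathcal R^{N-\alpha}/(N-\alpha)$, giving $\mathcal R^{-(\alpha-2t)}$.

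On $\Omega_3$, $|x-y|\ge |y|/2$, so
\[
I_3\lesssim\int_{2\mathcal R}^\infty r^{2t-\beta-1}\,dr=\frac{(2\mathcal R)^{2t-\beta}}{\beta-2t}.
\]

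Collecting the pieces:

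In case (i), $\mathcal R^{2t-\alpha}\le \mathcal R^{2t-N}$, and $\beta-2t=\alpha-2t\ge \alpha-N>0$ is bounded below. Hence $I_t(x)\lesssim (1/t+1)\mathcal R^{-(N-2t)}$, and multiplying by $\mathcal P_0(t)\lesssim t/(N-2t)$ yields $C_N\Phi_N(t,x)$.

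In case (ii), the dominant factor is $1/(\alpha-2t)$ from $I_3$. The coefficient $\mathcal P_0(t)$ is bounded on $(0,\alpha/2)$ because $\alpha<N$, with $\mathcal P_0(t)\lesssim t$ near $0$. This gives $C_\alpha\Phi_\alpha(t,x)$.

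The lifespan claims then follow. In (i), the bound \eqref{eq:linear-profile-fast-decay} is finite for each $t<N/2$, locally uniformly in $t$, so $T_{\mu,0}=N/2$. In (ii), the bound gives $T_{\mu,0}\ge\alpha/2$. For the reverse inequality, the lower half of \eqref{eq:initial-slow-decay-lower} makes $S_{\ln}(t)u_0(0)=\infty$ for $t\ge\alpha/2$, exactly as in the proof of Lemma~\ref{lem:weighted-linear-flow-bound}.

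The main obstacle is bookkeeping the constants uniformly in $t$ at both endpoints. Near $t=0$, the $1/(2t)$ singularity from $\Omega_1$ has to be cancelled against $\mathcal P_0(t)\sim t$. Near the terminal time in case (i), one must check that the only blow-up factor is $1/(N-2t)$, coming solely from $\mathcal P_0$, and that no additional logarithmic loss arises in $I_2$. That is why the condition $\alpha>N$, which makes $\int(1+|y|)^{-\alpha}\,dy$ finite, is used there.
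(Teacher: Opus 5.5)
Your proposal is correct and follows the paper's proof essentially step for step. Both arguments bound $u_0$ by a multiple of $(1+|y|)^{-\alpha}$ and split the Riesz-type convolution over the same three regions $\Omega_1,\Omega_2,\Omega_3$. Both then use $\mathcal P_0(t)\lesssim t/(N-2t)$ to absorb the $1/(2t)$ singularity coming from $\Omega_1$, and in the slow-decay case the factor $1/(\alpha-2t)$ from $\Omega_3$ sets the terminal rate. Your derivation of the lifespan identities $T_{\mu,0}=N/2$ and $T_{\mu,0}=\alpha/2$ is more explicit than the paper's one-line remark, and it is correct: you use the profile bound for the lower bound and the lower half of \eqref{eq:initial-slow-decay-lower} for divergence when $t\ge\alpha/2$.
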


\begin{proof}
We first prove~(i). Set $\beta:=N-2t\in(0,N)$ and $\mathcal R:=1+|x|.$ By \eqref{eq:initial-fast-decay-superlinear}, $u_0(y)
\le
M_\alpha(u_0)(1+|y|)^{-\alpha}.$
Hence
\begin{equation}\label{eq:linear-profile-proof-start}
S_{\ln}(t)u_0(x)
\le
M_\alpha(u_0)\mathcal P_0(t)
\int_{\mathbb R^N}
|x-y|^{-\beta}(1+|y|)^{-\alpha}\,dy.
\end{equation}

We claim that
\begin{equation}\label{eq:weighted-convolution-fast-decay}
\int_{\mathbb R^N}
|x-y|^{-\beta}(1+|y|)^{-\alpha}\,dy
\le
C
\left(1+\frac{1}{N-\beta}\right)
(1+|x|)^{-\beta},
\end{equation}
where \(C>0\) depends only on \(N\) and \(\alpha\).

To prove this estimate, decompose \(\mathbb R^N\) into
\[
\Omega_1
:=
\left\{y\in\mathbb R^N:
|x-y|\le\frac{\mathcal R}{2}\right\},\quad 
\Omega_2
:=
\left\{y\in\mathbb R^N:
|x-y|>\frac{\mathcal R}{2},\ |y|\le2\mathcal R\right\},
\]
and $\Omega_3
:=
\left\{y\in\mathbb R^N:
|y|>2\mathcal R\right\}.$
Denote the corresponding integrals by \(I_1,I_2,I_3\).

For \(y\in\Omega_1\), we have $1+|y|
\ge
\frac{\mathcal R}{2}.$
Therefore,
\begin{equation}\label{eq:I1-fast-decay}
I_1\le
C\mathcal R^{-\alpha}
\int_{|x-y|\le\mathcal R/2}|x-y|^{-\beta}\,dy\le
\frac{C}{N-\beta}
\mathcal R^{N-\beta-\alpha}\le 
\frac{C}{N-\beta}\mathcal R^{-\beta}.
\end{equation}

On \(\Omega_2\), we have $|x-y|>\frac{\mathcal R}{2},$ so that
\begin{equation}\label{eq:I2-fast-decay}
I_2
\le
C\mathcal R^{-\beta}
\int_{|y|\le2\mathcal R}(1+|y|)^{-\alpha}\,dy\le C\mathcal R^{-\beta}.
\end{equation}

Finally, if \(y\in\Omega_3\), then \(|y|>2\mathcal R\) and $|x-y|\ge |y|-|x|>\frac{|y|}{2}.$
It follows that
\begin{equation} \label{eq:I3-fast-decay}
    \begin{aligned}
I_3
&\le
C\int_{|y|>2\mathcal R}|y|^{-\beta-\alpha}\,dy=
\frac{C}{\alpha+\beta-N}
\mathcal R^{N-\alpha-\beta}\le C\mathcal R^{-\beta}.
\end{aligned}
\end{equation}
Combining \eqref{eq:I1-fast-decay}--\eqref{eq:I3-fast-decay} proves
\eqref{eq:weighted-convolution-fast-decay}.

By \eqref{eq:zero} and \eqref{eq:n2},
there exists \(C_N>0\) such that
\begin{equation}\label{eq:P0-global-bound}
\mathcal P_0(t)
\le
C_N\frac{t}{N-2t},
\qquad
0<t<\frac N2,
\end{equation}
it follows from \eqref{eq:weighted-convolution-fast-decay} and
\eqref{eq:P0-global-bound} that
\[
\begin{aligned}
&\mathcal P_0(t)
\int_{\mathbb R^N}
|x-y|^{-\beta}(1+|y|)^{-\alpha}\,dy\le
\frac{C}{N-2t}
(1+|x|)^{-(N-2t)}.
\end{aligned}
\]
Substituting this estimate into
\eqref{eq:linear-profile-proof-start}, we obtain
\[
S_{\ln}(t)u_0(x)
\le
C_0
\frac{1}{N-2t}
(1+|x|)^{-(N-2t)}
=
C_0\Phi(t,x),
\]
where \(C_0>0\) depends only on \(N\), \(\alpha\), and
\(M_\alpha(u_0)\). This proves
\eqref{eq:linear-profile-fast-decay}.

\smallskip

We next prove~(ii).  For \(0<t<\alpha/2\), we have $\alpha+\beta-N=\alpha-2t>0.$
Repeating the same decomposition used above, we obtain
\begin{equation}\label{eq:weighted-convolution-slow-decay}
\begin{aligned}
&\int_{\mathbb R^N}
|x-y|^{-\beta}(1+|y|)^{-\alpha}\,dy
\\
&\qquad\le
C
\left(
1+\frac{1}{N-\beta}
+\frac{1}{\alpha+\beta-N}
\right)
(1+|x|)^{-(\alpha+\beta-N)}
\\
&\qquad=
C
\left(
1+\frac{1}{2t}
+\frac{1}{\alpha-2t}
\right)
(1+|x|)^{-(\alpha-2t)}.
\end{aligned}
\end{equation}
Since \(\alpha<N\), by \eqref{eq:zero},
\[
\mathcal P_0(t)
\left(
1+\frac{1}{2t}
+\frac{1}{\alpha-2t}
\right)
\le
\frac{C}{\alpha-2t},
\qquad
0<t<\frac{\alpha}{2}.
\]
Combining this estimate with
\eqref{eq:weighted-convolution-slow-decay} gives
\[
S_{\ln}(t)u_0(x)
\le
\frac{C_\alpha}{\alpha-2t}
(1+|x|)^{-(\alpha-2t)}.
\]

For the linear problem, the unique nonnegative mild solution with
initial datum \(\mu u_0\) is $u(t,x)=\mu S_{\ln}(t)u_0(x),$ thus we complete the proof.
\end{proof}

We next establish the key quadratic convolution estimate for the
time-dependent profiles introduced above, which will be used in the
construction of small-data supersolutions.

\begin{lemma}
\label{lem:quadratic-profile-estimate}
The following statements hold.

\begin{enumerate}

\item[(i)] Define
 $W_{A,N}(t,x):=A\Phi_N(t,x),\,A>0,$ where $\Phi_N(t,x)$ is defined in \eqref{phitxn}.
Then there exists a constant \(C_*>0\), depending only on \(N\), such
that
\begin{equation}\label{eq:quadratic-profile-estimate}
\int_0^t
S_{\ln}(t-s)
\bigl(W_{A,N}(s,\cdot)^2\bigr)(x)\,ds
\le
C_*A\,W_{A,N}(t,x)
\end{equation}
for every \(0<t<N/2\) and \(x\in\mathbb R^N\).

\item[(ii)] Let \(0<\alpha<N\), and define
$W_{A,\alpha}(t,x):=A\Phi_\alpha(t,x),\,A>0,$ where $\Phi_\alpha(t,x)$ is defined in \eqref{phitxalpha}.
Then there exists a constant \(C_*>0\), depending only on \(N\), such
that
\begin{equation}\label{eq:quadratic-profile-estimate-slow}
\int_0^t
S_{\ln}(t-s)
\bigl(W_{A,\alpha}(s,\cdot)^2\bigr)(x)\,ds
\le
C_*A\,W_{A,\alpha}(t,x)
\end{equation}
for every \(0<t<\alpha/2\) and \(x\in\mathbb R^N\).

\end{enumerate}
\end{lemma}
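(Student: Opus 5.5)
The plan is to reduce both parts to the two auxiliary estimates already proved: the weighted convolution bound of Lemma~\ref{lem:weighted-convolution} with \(p=2\), followed by the time-integral bound of Lemma~\ref{lem:time-integral-claim} with \(p=2\). The key step is a change of variables that converts the time exponents of the profile into the parameters \(a<b\) of those lemmas.

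\textbf{Part (i).} Fix \(0<t<N/2\) and \(x\in\mathbb R^N\). For \(0<s<t\), set
\[
a:=N-2t,\qquad b:=N-2s,
\]
so that \(0<a<b<N\). Then
\[
W_{A,N}(s,y)^2=A^2b^{-2}(1+|y|)^{-2b}.
\]
The kernel exponent is
\[
2(t-s)-N=(b-a)-N=-(N-b+a).
\]
Hence
\[
S_{\ln}(t-s)\bigl(W_{A,N}(s,\cdot)^2\bigr)(x)
=A^2b^{-2}\mathcal P_0\!\Bigl(\tfrac{b-a}{2}\Bigr)\int_{\mathbb R^N}|x-y|^{-(N-b+a)}(1+|y|)^{-2b}\,dy .
\]
Lemma~\ref{lem:weighted-convolution} with \(p=2\) bounds this by
\[
CA^2b^{-2}\mathcal P_0\!\Bigl(\tfrac{b-a}{2}\Bigr)\Bigl(\frac1{b-a}+\frac1{a+b}\Bigr)(1+|x|)^{-a}.
\]
The spatial factor \((1+|x|)^{-a}\) does not depend on \(s\), so it can be pulled out of the time integral.

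Next change variables via \(ds=db/2\), with \(b\) ranging over \((a,N)\). Lemma~\ref{lem:time-integral-claim} with \(p=2\) then bounds the remaining integral by \(Ca^{-1}\). Altogether,
\[
\int_0^tS_{\ln}(t-s)\bigl(W_{A,N}(s,\cdot)^2\bigr)(x)\,ds\le C_*A^2\,\frac{(1+|x|)^{-(N-2t)}}{N-2t}=C_*A\,W_{A,N}(t,x),
\]
which is \eqref{eq:quadratic-profile-estimate}. The constant depends only on \(N\), because both lemmas give constants depending only on \(N\) and \(p=2\).

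\textbf{Part (ii).} The argument is identical with
\[
a:=\alpha-2t,\qquad b:=\alpha-2s .
\]
Again \(2(t-s)-N=-(N-b+a)\) and \(0<a<b<\alpha<N\), so Lemma~\ref{lem:weighted-convolution} applies unchanged. After the change of variables, \(b\) ranges only over \((a,\alpha)\subset(a,N)\). Since the integrand is nonnegative, the integral can be enlarged to \((a,N)\) and Lemma~\ref{lem:time-integral-claim} invoked as before. This gives the bound \(Ca^{-1}(1+|x|)^{-a}A^2=C_*A\,W_{A,\alpha}(t,x)\), with a constant independent of \(\alpha\).

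\textbf{Main obstacle.} The delicate point is the endpoint \(s\to0\), i.e.\ \(b\to N\) in part (i). There \(\mathcal P_0((b-a)/2)\) approaches \(\mathcal P_0(N/2-\cdot)\) and degenerates. This is exactly what Lemma~\ref{lem:time-integral-claim} absorbs, through the bound \(\mathcal P_0(r/2)\le C_0r/(N-r)\) and the resulting logarithm, which is controlled by \(a^{-1}\). So the only real check is that the hypotheses \(0<a<b<N\) and \(p=2\in(1,2]\) of both lemmas hold on the whole range of integration, which they do.
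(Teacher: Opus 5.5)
Your proposal is correct and follows the paper's proof essentially verbatim. It uses the same substitution $a=\gamma-2t$, $b=\gamma-2s$ with $\gamma\in\{N,\alpha\}$, applies Lemma~\ref{lem:weighted-convolution} and Lemma~\ref{lem:time-integral-claim} with $p=2$, and in case (ii) enlarges the $b$-range from $(a,\alpha)$ to $(a,N)$ so that the constant depends only on $N$. One small remark: the logarithm absorbed by Lemma~\ref{lem:time-integral-claim} really comes from $a\to0$ (i.e.\ $t\to N/2$), where $\mathcal P_0\bigl(\tfrac{b-a}{2}\bigr)\lesssim (N-b+a)^{-1}$ near $b=N$; it is not a degeneration at $s\to0$ for fixed $t$, since there $\mathcal P_0\bigl(\tfrac{b-a}{2}\bigr)\to\mathcal P_0(t)<\infty$.
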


\begin{proof}
The two statements can be proved simultaneously. Set
\[
\gamma
:=
\begin{cases}
N & \text{in case~(i)},\\
\alpha & \text{in case~(ii)},
\end{cases}
\]
and write
\[
\Phi_\gamma(t,x)
:=
\frac{1}{\gamma-2t}
(1+|x|)^{-(\gamma-2t)},
\qquad
W_{A,\gamma}(t,x):=A\Phi_\gamma(t,x).
\]

Fix \(t\in(0,\gamma/2)\) and \(x\in\mathbb R^N\), and set $a:=\gamma-2t\in(0,\gamma).$
For \(0<s<t\), let $b:=\gamma-2s.$
Then $a<b<\gamma\le N$
and we obtain
\[
\begin{aligned}
&\int_0^t
S_{\ln}(t-s)
\bigl(W_{A,\gamma}(s,\cdot)^2\bigr)(x)\,ds
\\
&\quad=
\frac{A^2}{2}
\int_a^\gamma
b^{-2}\mathcal P_0\!\left(\frac{b-a}{2}\right)
\int_{\mathbb R^N}
|x-y|^{-(N-b+a)}
(1+|y|)^{-2b}\,dy\,db.
\end{aligned}
\]

By Lemma~\ref{lem:weighted-convolution}, applied with \(p=2\), there
exists \(C=C(N)>0\) such that
\[
\begin{aligned}
&\int_{\mathbb R^N}
|x-y|^{-(N-b+a)}
(1+|y|)^{-2b}\,dy\le
C
\left(
\frac{1}{b-a}
+
\frac{1}{a+b}
\right)
(1+|x|)^{-a}.
\end{aligned}
\]
Consequently,
\[
\begin{aligned}
&\int_0^t
S_{\ln}(t-s)
\bigl(W_{A,\gamma}(s,\cdot)^2\bigr)(x)\,ds\le
CA^2(1+|x|)^{-a}
\int_a^\gamma
b^{-2}\mathcal P_0\!\left(\frac{b-a}{2}\right)
\left(
\frac{1}{b-a}
+
\frac{1}{a+b}
\right)db.
\end{aligned}
\]
Since \(\gamma\le N\), by Lemma~\ref{lem:time-integral-claim}, with \(p=2\), the last
integral is bounded by \(Ca^{-1}\). Therefore,
\[
\begin{aligned}
\int_0^t
S_{\ln}(t-s)
\bigl(W_{A,\gamma}(s,\cdot)^2\bigr)(x)\,ds
&\le
CA^2a^{-1}(1+|x|)^{-a}=
CA\,W_{A,\gamma}(t,x).
\end{aligned}
\]
Taking \(\gamma=N\) gives~(i), while taking \(\gamma=\alpha\)
gives~(ii).
\end{proof}

We prove Theorem~\ref{thm:noncritical-tail-dichotomy} by combining the
compact-set lower estimate and scalar ODE comparison with the small-data
supersolution construction, the large-data blow-up criterion, and the
monotonicity of the lifespan with respect to \(\mu\).

\begin{proof}[\textbf{Proof of Theorem \ref{thm:noncritical-tail-dichotomy}.}]
\textbf{(a).} Fix \(\mu>0\). We prove both assertions by the same argument.

For the first assertion, set $\gamma:=N,$ and for the second assertion, set $\gamma:=\alpha,$ $T:=\frac{\gamma}{2}.$
In either case, suppose by contradiction that there exists a nonnegative
mild solution \(u\) on \((0,T)\).

By the corresponding assertion of Lemma~\ref{prop:lowbound}, there
exists a closed ball $K=\overline{B_R(x_0)}\subset\mathbb R^N$
such that, upon setting $m(t):=m_K(t):=\inf_{x\in K}u(t,x),0<t<T,$
the following estimate holds: for every
\(\delta\in(0,T/2)\), there exist constants
\(c_{K,\delta}>0\) and \(C_{K,\delta}>0\) such that
\begin{equation}\label{eq:closed-ineq-proof-final}
m(t)
\ge
\frac{\mu c_{K,\delta}}{\gamma-2t}
+
C_{K,\delta}
\int_{t-\delta}^{t}f(m(s))\,ds,
\qquad t\in[\delta,T).
\end{equation}
Fix such a \(\delta\), and write, for simplicity, $c:=\mu c_{K,\delta}$ and $C:=C_{K,\delta}.$

Since \(f\) is nonnegative
and nondecreasing and satisfies the weighted Osgood tail condition \eqref{eq:tail-condition-main},
there exists \(R_0>0\) such that
\[
f(\rho)>0\quad\text{and}\quad \int_\rho^\infty\frac{d\sigma}{f(\sigma)}<\infty
\qquad\text{for every }\rho\ge R_0.
\]
Define
\[
\Phi(\rho)
:=
\int_\rho^\infty\frac{d\sigma}{f(\sigma)},
\qquad \rho\ge R_0.
\]

By the weighted Osgood tail condition, there exists a sequence
\(\{\rho_j\}_{j\ge1}\) such that
\[
\rho_j\to\infty
\qquad\text{and}\qquad
\rho_j\Phi(\rho_j)\to0.
\]
Consequently, for all sufficiently large \(j\),
\[
\rho_j\ge R_0
\qquad\text{and}\qquad
\frac1C\Phi(\rho_j)
<
\frac{c}{2\rho_j}.
\]

Fix such a \(j\), and set $\varepsilon:=\frac{c}{2\rho_j}$ and $t_0:=T-\varepsilon.$
By taking \(j\) larger if necessary, we may assume that $0<\varepsilon<\delta.$
Since \(\delta<T/2\), it follows that \(t_0>\delta\). Moreover, for
every \(t\in[t_0,T)\), $t-\delta<t_0,$ thus,
\[m(t)
\ge
\frac{c}{\gamma-2t}
+
C\int_{t_0}^{t}f(m(s))\,ds,
\qquad t\in[t_0,T).\]

Define
\[
y(t)
:=
\frac{c}{\gamma-2t}
+
C\int_{t_0}^{t}f(m(s))\,ds,
\qquad t\in[t_0,T).
\]
Then
\begin{equation}\label{eq:m-dom-y}
m(t)\ge y(t),
\qquad t\in[t_0,T).
\end{equation}
The function \(y\) is absolutely continuous on every compact
subinterval of \([t_0,T)\), and, for a.e.
\(t\in[t_0,T)\),
\[
y'(t)
=
\frac{2c}{(\gamma-2t)^2}
+
C f(m(t)).
\]
Using \eqref{eq:m-dom-y} and the monotonicity of \(f\), we obtain
\begin{equation}\label{eq:y-ode-ineq-final}
y'(t)\ge C f(y(t))
\qquad\text{for a.e. }t\in[t_0,T).
\end{equation}
In particular, \(y\) is nondecreasing on \([t_0,T)\).

Since \(\gamma=2T\), we have $y(t_0)=\rho_j.$
Hence $y(t_0)\ge R_0$
and, by the choice of \(\rho_j\),
\begin{equation}\label{eq:small-osgood-time-final}
\frac1C\Phi(y(t_0))
=
\frac1C\Phi(\rho_j)
<
\frac{c}{2\rho_j}
=
\varepsilon.
\end{equation}
Since \(y\) is nondecreasing, $y(t)\ge y(t_0)\ge R_0$ for every $t\in[t_0,T).$ Thus, by the chain rule and
\eqref{eq:y-ode-ineq-final}, for a.e.
\(t\in[t_0,T)\),
\[
\frac{d}{dt}\Phi(y(t))
=
-\frac{y'(t)}{f(y(t))}
\le -C.
\]

Set $t_*:=t_0+\frac1C\Phi(y(t_0)).$
By \eqref{eq:small-osgood-time-final}, $t_*<t_0+\varepsilon=T.$ Integrating the preceding differential inequality
from \(t_0\) to \(t_*\), we obtain
\[
\Phi(y(t_*))
\le
\Phi(y(t_0))-C(t_*-t_0)
=0.
\]
This is impossible, since \(y(t_*)<\infty\), \(y(t_*)\ge R_0\), and
\[
\Phi(y(t_*))
=
\int_{y(t_*)}^\infty\frac{d\sigma}{f(\sigma)}
>0.
\]
We have therefore reached a contradiction.

Finally, under \eqref{eq:initial-slow-decay-lower}, the linear lifespan formula \eqref{eq:guaranteed-linear-lifespan} gives $T_{\mu,0}=\frac{\alpha}{2},$
and hence $T_{\mu,f}<T_{\mu,0}=\frac{\alpha}{2}.$
By Theorem \ref{thm:local-wellposedness}, we have $T_{\mu,f}>0,$ the proof is complete.

\smallskip

\textbf{(b).} Set $\Phi_f(\rho):=
\int_\rho^\infty\frac{d\sigma}{f(\sigma)}.$
By \eqref{eq:osgood-tail-comparability}, there exist constants
\(C_\infty>0\) and \(R_\infty>0\) such that
\begin{equation}\label{eq:osgood-tail-comparability-uniform}
\Phi_f(\rho)
\le
C_\infty\frac{\rho}{f(\rho)},
\qquad \rho\ge R_\infty.
\end{equation}
In particular, $\Phi_f(R_\infty)<\infty.$
Thus the Osgood condition at infinity
\eqref{eq:osgood-condition-large} is satisfied, and hence
\begin{equation}\label{limits}
    T_{\mu,f}\longrightarrow0
\qquad\text{as }\mu\to\infty.
\end{equation}

On the other hand, the condition $\mathcal{F}$ implies that there
exist \(C_{\mathrm{loc}}>0\) and \(r_0>0\) such that
\[
f(s)\le C_{\mathrm{loc}}s,
\qquad 0<s\le r_0.
\]
Since the weighted
Osgood tail condition \eqref{eq:tail-condition-main} fails, $\liminf_{\rho\to\infty}\rho\Phi_f(\rho)>0.$
Consequently, there exist \(c_*>0\) and \(R_*>0\) such that
\begin{equation}\label{eq:weighted-osgood-lower-bound}
\rho\Phi_f(\rho)\ge c_*,
\qquad \rho\ge R_*.
\end{equation}
Combining \eqref{eq:osgood-tail-comparability-uniform} and
\eqref{eq:weighted-osgood-lower-bound}, for all sufficiently large
\(\rho\) we obtain that $f(\rho)\le \frac{C_\infty}{c_*}\rho^2.$
Together with the local linear bound above and the continuity of \(f\),
this yields a constant \(\rho_0>0\) such that
\begin{equation}\label{eq:global-linear-quadratic-majorant}
f(s)\le \rho_0(s+s^2),
\qquad s\ge0.
\end{equation}

We now prove~(i).

\medskip
\noindent
\textbf{Step 1. Existence up to \(N/2\) for sufficiently small \(\mu\).}

Let
\[
\Phi_{N}(t,x)
=
\frac{1}{N-2t}(1+|x|)^{-(N-2t)}.
\]
By Lemma~\ref{lem:linear-profile-fast-decay}, there exists \(C_N>0\)
such that
\begin{equation}\label{eq:linear-profile-used-threshold}
S_{\ln}(t)u_0(x)
\le
C_N\Phi_N(t,x)
\end{equation}
for every \(0<t<N/2\) and \(x\in\mathbb R^N\). Set $E_0:=e^{\rho_0N/2}.$ Choose \(A>0\) sufficiently small that
\begin{equation}\label{eq:A-choice-general-f}
\rho_0E_0C_*A\le\frac12,
\end{equation}
where \(C_*\) is the constant in
Lemma~\ref{lem:quadratic-profile-estimate}. Next choose \(\mu_0>0\)
such that
\begin{equation}\label{eq:mu0-choice-general-f}
\mu_0E_0C_N\le\frac A2.
\end{equation}

Fix \(0<\mu\le\mu_0\), and define $W_{A,N}(t,x):=A\Phi_{N}(t,x).$
Consider the integral operator
\[
\mathcal Q_\mu[v](t,x)
:=
\mu e^{\rho_0t}S_{\ln}(t)u_0(x)
+
\rho_0\int_0^t
e^{\rho_0(t-s)}
S_{\ln}(t-s)\bigl(v(s,\cdot)^2\bigr)(x)\,ds.
\]
Using \eqref{eq:linear-profile-used-threshold},
\eqref{eq:mu0-choice-general-f}, and \(t<N/2\), we obtain
\[
\mu e^{\rho_0t}S_{\ln}(t)u_0(x)
\le
\mu E_0C_N\Phi_{N}(t,x)
\le
\frac A2\Phi_{N}(t,x)
=
\frac12W_{A,N}(t,x).
\]
On the other hand, Lemma~\ref{lem:quadratic-profile-estimate} and
\eqref{eq:A-choice-general-f} give
\[
\begin{aligned}
&\rho_0\int_0^t
e^{\rho_0(t-s)}
S_{\ln}(t-s)\bigl(W_{A,N}(s,\cdot)^2\bigr)(x)\,ds\le
\rho_0E_0C_*A\,W_{A,N}(t,x)\le
\frac12W_{A,N}(t,x).
\end{aligned}
\]
Therefore,
\begin{equation}\label{eq:Q-supersolution-W}
\mathcal Q_\mu[W_{A,N}](t,x)\le W_{A,N}(t,x).
\end{equation}

Define $v^{(0)}(t,x):=0$ and, inductively, $v^{(n+1)}:=\mathcal Q_\mu[v^{(n)}].$
Since \(\mathcal Q_\mu\) is order preserving,
\eqref{eq:Q-supersolution-W} yields
\[
0\le v^{(0)}
\le v^{(1)}
\le\cdots
\le v^{(n)}
\le v^{(n+1)}
\le W_{A,N}.
\]
Hence the pointwise limit
\[
v(t,x):=\lim_{n\to+\infty}v^{(n)}(t,x)
\]
exists and satisfies $0\le v(t,x)\le W_{A,N}(t,x).$ By the monotone convergence theorem, we have
\[
\begin{aligned}
v(t,x)
={}&
\mu e^{\rho_0t}S_{\ln}(t)u_0(x)+
\rho_0\int_0^t
e^{\rho_0(t-s)}
S_{\ln}(t-s)\bigl(v(s,\cdot)^2\bigr)(x)\,ds.
\end{aligned}
\]

We next show that
\[\begin{aligned}
\rho_0\int_0^t
S_{\ln}(t-s)v(s,\cdot)\,ds
={}&
\mu\bigl(e^{\rho_0 t}-1\bigr)S_{\ln}(t)u_0
+
\rho_0\int_0^t
\bigl(e^{\rho_0(t-s)}-1\bigr)
S_{\ln}(t-s)\bigl(v(s,\cdot)^2\bigr)\,ds.
\end{aligned}\]
Indeed, 
\[
\begin{aligned}
\rho_0\int_0^t
S_{\ln}(t-s)v(s,\cdot)\,ds&=
\rho_0\mu\int_0^t
e^{\rho_0s}
S_{\ln}(t-s)S_{\ln}(s)u_0\,ds
\\
&\quad+
\rho_0^2\int_0^t\int_0^s
e^{\rho_0(s-r)}
S_{\ln}(t-s)S_{\ln}(s-r)
\bigl(v(r,\cdot)^2\bigr)\,dr\,ds.
\end{aligned}
\]
Since \(0<r<s<t<N/2\), the local semigroup property gives
\[
\begin{aligned}
&\rho_0\mu\int_0^t
e^{\rho_0s}
S_{\ln}(t-s)S_{\ln}(s)u_0\,ds=
\mu\bigl(e^{\rho_0t}-1\bigr)
S_{\ln}(t)u_0.
\end{aligned}
\]
For the second term, 
by Tonelli's theorem
and the local semigroup property yields
\[
\begin{aligned}
&\rho_0^2\int_0^t\int_0^s
e^{\rho_0(s-r)}
S_{\ln}(t-s)S_{\ln}(s-r)
\bigl(v(r,\cdot)^2\bigr)\,dr\,ds
\\
&\quad=
\rho_0^2\int_0^t\int_r^t
e^{\rho_0(s-r)}
S_{\ln}(t-r)\bigl(v(r,\cdot)^2\bigr)\,ds\,dr
\\
&\quad=
\rho_0\int_0^t
\bigl(e^{\rho_0(t-r)}-1\bigr)
S_{\ln}(t-r)\bigl(v(r,\cdot)^2\bigr)\,dr.
\end{aligned}
\]

Consequently, we have
\[
\begin{aligned}
&\mu S_{\ln}(t)u_0
+
\rho_0\int_0^t
S_{\ln}(t-s)
\bigl(v(s,\cdot)+v(s,\cdot)^2\bigr)\,ds
\\
&\quad=
\mu S_{\ln}(t)u_0
+
\mu\bigl(e^{\rho_0t}-1\bigr)S_{\ln}(t)u_0+
\rho_0\int_0^t
\bigl(e^{\rho_0(t-s)}-1\bigr)
S_{\ln}(t-s)\bigl(v(s,\cdot)^2\bigr)\,ds
\\
&\qquad+
\rho_0\int_0^t
S_{\ln}(t-s)\bigl(v(s,\cdot)^2\bigr)\,ds
\\
&\quad=
\mu e^{\rho_0t}S_{\ln}(t)u_0
+
\rho_0\int_0^t
e^{\rho_0(t-s)}
S_{\ln}(t-s)\bigl(v(s,\cdot)^2\bigr)\,ds=
v(t,\cdot).
\end{aligned}
\]
By \eqref{eq:global-linear-quadratic-majorant}, we obtain $
v(t,x)\ge
\mu S_{\ln}(t)u_0(x)
+
\int_0^t
S_{\ln}(t-s)\bigl(f(v(s,\cdot))\bigr)(x)\,ds.
$
By Lemma~\ref{lem:existence-below-supersolution}, there exists a
nonnegative solution \(u_\mu\) of the integral equation on
\((0,N/2)\), satisfying
\[
0\le u_\mu(t,x)\le v(t,x)\le W_{A,N}(t,x).
\]
We verify that \(u_\mu\) is a mild solution in the sense of
Definition~\ref{def:mild-solution}. Fix \(\tau\in(0,N/2)\), and choose
\(\tau_0\in(0,\tau)\) sufficiently small that the local mild solution
provided by Theorem~\ref{thm:local-wellposedness} exists on
\((0,\tau_0)\). By minimality and uniqueness, it coincides with
\(u_\mu\) on this interval. Hence $u_\mu|_{(0,\tau_0)\times\mathbb R^N}\in\mathcal X_{\tau_0}.$

Since \(u_0\ge0\) and \(u_0\not\equiv0\), there exist
\(y_0\in\mathbb R^N\) and \(R>0\) such that $m_0:=\int_{B_R(y_0)}u_0(y)\,dy>0.$
For \(t\in[\tau_0,\tau]\) and \(x\in\mathbb R^N\), we have
\[
\begin{aligned}
S_{\ln}(t)u_0(x)
&\ge
\mathcal P_0(t)
\int_{B_R(y_0)}
|x-y|^{2t-N}u_0(y)\,dy\ge
c_{\tau_0,\tau}
(1+|x|)^{-(N-2t)},
\end{aligned}
\]
where \(c_{\tau_0,\tau}>0\), it follows that $W_{A,N}(t,x)
\le
C_{\tau_0,\tau}S_{\ln}(t)u_0(x)$ for  $\tau_0\le t\le\tau.$
Combining this estimate with the profile bound on \((0,\tau_0)\), we
obtain $u_\mu|_{(0,\tau)\times\mathbb R^N}\in\mathcal X_{\tau,u_0}.$
Since \(\tau<N/2\) is arbitrary, \(u_\mu\) is a nonnegative mild
solution on \((0,N/2)\). Therefore, $T_{\mu,f}=\frac N2$ for every $0<\mu\le\mu_0.$

\medskip
\noindent
\textbf{Step 2. Large initial data do not exist up to \(N/2\).}

By \eqref{limits}, there exists \(\mu_\infty>0\) such that
\begin{equation}\label{eq:large-mu-short-lifespan}
T_{\mu,f}<\frac N2
\qquad\text{for every }\mu>\mu_\infty.
\end{equation}

Define
\begin{equation}\label{eq:def-critical-mu}
\mathcal G
:=
\left\{
\mu>0:
T_{\mu,f}=\frac N2
\right\},
\qquad
\mu^*_{\infty}
:=
\sup\mathcal G.
\end{equation}
The set \(\mathcal G\) is
nonempty and $\mu^*_{\infty}\ge\mu_0>0.$
By \eqref{eq:large-mu-short-lifespan}, we obtain that $0<\mu^*_{\infty}<\infty.$

\medskip
\noindent
\textbf{Step 3. Classification of the maximal existence time.}

Let \(0<\mu<\mu^*_{\infty}\). By the definition of \(\mu^*_{\infty}\), there exists
\(\nu\in\mathcal G\) such that $\mu<\nu\le\mu^*_{\infty}.$
By Lemma~\ref{lem:lifespan-monotonicity-mu}, we have $T_{\mu,f}\ge T_{\nu,f}=\frac N2.$
Since \(T_{\mu,f}\le N/2\) by definition, it follows that $T_{\mu,f}=\frac N2.$

If \(\mu>\mu^*_{\infty}\), then \(\mu\notin\mathcal G\), and therefore $T_{\mu,f}<\frac N2.$
On the other hand, Theorem~\ref{thm:local-wellposedness}
implies \(T_{\mu,f}>0\). Thus $T_{\mu,f}\in\left(0,\frac N2\right)$ for every $\mu>\mu^*_{\infty}.$

\smallskip

We next prove~(ii). The argument is the same as that used in~(i), with
the linear terminal time \(N/2\) and the fast-decay profile replaced by $\frac{\alpha}{2}$
and
\[
\Phi_\alpha(t,x)
:=
\frac{1}{\alpha-2t}
(1+|x|)^{-(\alpha-2t)},
\qquad
0<t<\frac{\alpha}{2},
\]
respectively. Indeed, by Lemma~\ref{lem:linear-profile-fast-decay}, there
exists \(C_\alpha>0\) such that
\[
S_{\ln}(t)u_0(x)
\le
C_\alpha\Phi_\alpha(t,x),
\qquad
0<t<\frac{\alpha}{2}.
\]
Set $E_\alpha:=e^{\rho_0\alpha/2}$
and define $W_{A,\alpha}(t,x):=A\Phi_\alpha(t,x).$
Choose \(A>0\) sufficiently small that $\rho_0E_\alpha C_*A\le\frac12,$
where \(C_*\) is the constant in
Lemma~\ref{lem:quadratic-profile-estimate}, and then choose
\(\mu_{\alpha,0}>0\) such that $\mu_{\alpha,0}E_\alpha C_\alpha\le\frac A2.$
Using the slow-decay estimate in
Lemma~\ref{lem:quadratic-profile-estimate}, the same monotone iteration
argument as in~(i) yields
\[
T_{\mu,f}=\frac{\alpha}{2}
\qquad
\text{for every }0<\mu\le\mu_{\alpha,0}.
\]

On the other hand,  \eqref{limits} gives a constant
\(\mu_{\alpha,\infty}>0\) such that $0<T_{\mu,f}<\frac{\alpha}{2}$ for every $\mu>\mu_{\alpha,\infty}.$
Define
\begin{equation}\label{eq:def-critical-mu-alpha}
\mathcal G_\alpha
:=
\left\{
\mu>0:
T_{\mu,f}=\frac{\alpha}{2}
\right\},
\qquad
\mu_\alpha^*
:=
\sup\mathcal G_\alpha.
\end{equation}
The preceding small and large data estimates imply that $0<\mu_\alpha^*<\infty.$
Finally, using the monotonicity of
\(\mu\mapsto T_{\mu,f}\), exactly as in~(i), we obtain the desired result.
\end{proof}

\section{Blow-Up and Threshold Behavior in the Critical Regime}

We now turn to the critical initial profile,
for which the linear solution exhibits the stronger terminal growth
\((N-2t)^{-2}\) as \(t\uparrow N/2\). This change in the singular
profile shifts the critical nonlinear growth from the quadratic order
of the noncritical regimes to the order \(3/2\). We first show that the square-root weighted Osgood tail condition at infinity forces premature blow-up, and then establish a threshold phenomenon for
nonlinearities of at most \(3/2\)-power growth.

We first derive sharp upper estimate for the
linear evolution of the critical tail, which will provide the basic
profile for the subsequent supersolution construction.

\begin{lemma}
\label{lem:linear-profile-critical-tail}
Assume that \eqref{eq:initial-critical-decay-lower} holds. Define
\[
\Phi_{\mathrm{crit}}(t,x)
:=
\frac{
1+(N-2t)\log(1+|x|)
}{
(N-2t)^2
}
(1+|x|)^{-(N-2t)},
\qquad
0<t<\frac N2.
\]
Then there exists a constant \(C_N>0\), depending only on \(N\) and $u_0$, such
that
\begin{equation}\label{eq:linear-profile-critical-tail}
S_{\ln}(t)u_0(x)
\le
C_N\Phi_{\mathrm{crit}}(t,x)
\end{equation}
for every \(0<t<N/2\) and \(x\in\mathbb R^N\). In particular, $T_{\mu,0}=\frac N2$ for every $\mu>0.$
\end{lemma}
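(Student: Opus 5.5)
We follow the pattern of the proof of Lemma~\ref{lem:linear-profile-fast-decay}(i). Set $\beta:=N-2t\in(0,N)$ and $\mathcal R:=1+|x|$. By \eqref{eq:initial-critical-decay-lower} and $u_0\in L^\infty_\alpha$, there is $M>0$ with $u_0(y)\le M(1+|y|)^{-N}$ for all $y$. This gives
\[
S_{\ln}(t)u_0(x)\le M\mathcal P_0(t)\int_{\mathbb R^N}|x-y|^{-\beta}(1+|y|)^{-N}\,dy .
\]
By \eqref{eq:P0-global-bound}, $\mathcal P_0(t)\le C t/\beta$. It therefore suffices to prove the weighted convolution bound
\[
\int_{\mathbb R^N}|x-y|^{-\beta}(1+|y|)^{-N}\,dy\le C\Bigl(1+\frac1{N-\beta}\Bigr)\,\frac{1+\beta\log\mathcal R}{\beta}\,\mathcal R^{-\beta},
\]
and then to absorb the factor $t/(N-\beta)=1/2$ coming from $\mathcal P_0$. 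Only the first regime needs care: for $t$ near $0$ the factor $\mathcal P_0(t)\sim t$ kills the $1/(N-\beta)=1/(2t)$ singularity. For $t$ bounded away from $0$, $\beta$ stays below $N-2t_0$, so all constants are uniform.

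We split $\mathbb R^N$ into the same three regions $\Omega_1,\Omega_2,\Omega_3$ as before.

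\textbf{$\Omega_1$ (near $x$).} There $1+|y|\ge\mathcal R/2$, so $I_1\le C\mathcal R^{-N}\mathcal R^{N-\beta}/(N-\beta)=C\mathcal R^{-\beta}/(N-\beta)$.

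\textbf{$\Omega_3$ (far field, $|y|>2\mathcal R$).} There $|x-y|\ge|y|/2$, so $I_3\le C\int_{2\mathcal R}^\infty r^{-\beta-1}\,dr=C\beta^{-1}\mathcal R^{-\beta}$. This is the source of the $1/\beta$ term.

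\textbf{$\Omega_2$ (intermediate).} There $|x-y|>\mathcal R/2$, so $I_2\le C\mathcal R^{-\beta}\int_{|y|\le 2\mathcal R}(1+|y|)^{-N}\,dy\le C\mathcal R^{-\beta}\log(1+2\mathcal R)\le C\mathcal R^{-\beta}(1+\log\mathcal R)$. This logarithm is exactly the $\beta\log\mathcal R/\beta$ term in $\Phi_{\mathrm{crit}}$.

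Summing the three pieces gives $I\le C(1+1/(N-\beta))\beta^{-1}(1+\beta\log\mathcal R)\mathcal R^{-\beta}$, using $1\le\beta^{-1}N$. Multiplying by $\mathcal P_0(t)\le Ct/\beta$ yields the factor $\beta^{-2}(1+\beta\log\mathcal R)\mathcal R^{-\beta}$, which is \eqref{eq:linear-profile-critical-tail}.

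For the lifespan, $\Phi_{\mathrm{crit}}(t,\cdot)$ is bounded for each fixed $t<N/2$, since $\log\mathcal R\,\mathcal R^{-\beta}$ is bounded. Its supremum is also uniform on $(0,\tau)$ for $\tau<N/2$. Hence $T_{\mu,0}\ge N/2$, and the definition gives $T_{\mu,0}\le N/2$.

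The main obstacle is bookkeeping of the two competing singular factors: the $1/(2t)$ from the diagonal near $t=0$, and the $1/\beta$ from the tail together with the logarithm of the critical $L^1$-divergence in $\Omega_2$. We must ensure that no $\log\mathcal R$ appears multiplied by $\beta^{-2}$. The split at radius $2\mathcal R$ handles this, because the logarithm arises only from the bounded region where the kernel is frozen at $\mathcal R^{-\beta}$.
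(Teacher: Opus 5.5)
Your proposal is correct and follows essentially the same route as the paper: the same split into $\Omega_1,\Omega_2,\Omega_3$, yielding the terms $\frac{1}{N-\beta}$, $1+\log\mathcal R$ and $\frac{1}{\beta}$, combined with $\mathcal P_0(t)\le Ct/\beta$. The differences are cosmetic. You package the sum as the product $\bigl(1+\frac{1}{N-\beta}\bigr)\beta^{-1}(1+\beta\log\mathcal R)$ before multiplying by $\mathcal P_0$, and you justify $T_{\mu,0}=N/2$ (and the global bound $u_0\le M(1+|y|)^{-N}$) a bit more explicitly than the paper, via uniform boundedness of $\Phi_{\mathrm{crit}}$ on $(0,\tau)\times\mathbb R^N$ for $\tau<N/2$.
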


\begin{proof}
Fix \(0<t<N/2\) and \(x\in\mathbb R^N\). Set $\beta:=N-2t\in(0,N)$ and $\mathcal R:=1+|x|.$
By \eqref{eq:initial-critical-decay-lower},
\begin{equation}\label{eq:critical-linear-profile-proof-start}
S_{\ln}(t)u_0(x)
\le c 
\mathcal P_0(t)
\int_{\mathbb R^N}
|x-y|^{-\beta}(1+|y|)^{-N}\,dy.
\end{equation}

We first prove the weighted convolution estimate
\begin{equation}\label{eq:critical-weighted-convolution}
\begin{aligned}
&\int_{\mathbb R^N}
|x-y|^{-\beta}(1+|y|)^{-N}\,dy\le
C_N
\left(
\frac{1}{N-\beta}
+
\frac{1}{\beta}
+
1+\log\mathcal R
\right)
\mathcal R^{-\beta}.
\end{aligned}
\end{equation}
To this end, decompose \(\mathbb R^N\) into
\[
\Omega_1
:=
\left\{
y\in\mathbb R^N:
|x-y|\le\frac{\mathcal R}{2}
\right\},\quad
\Omega_2
:=
\left\{
y\in\mathbb R^N:
|x-y|>\frac{\mathcal R}{2},
\ |y|\le2\mathcal R
\right\},
\]
and $\Omega_3
:=
\left\{
y\in\mathbb R^N:
|y|>2\mathcal R
\right\}.$
Denote the corresponding integrals by \(I_1,I_2,I_3\).

For \(y\in\Omega_1\), similar to the proof of Lemma \ref{lem:linear-profile-fast-decay}, we have $I_1\le 
\frac{C_N}{N-\beta}\mathcal R^{-\beta}.$

On \(\Omega_2\), we have $|x-y|>\frac{\mathcal R}{2},$
and hence
\[
\begin{aligned}
I_2
&\le
C_N\mathcal R^{-\beta}
\int_{|y|\le2\mathcal R}(1+|y|)^{-N}\,dy\le
C_N(1+\log\mathcal R)\mathcal R^{-\beta}.
\end{aligned}
\]

Finally, if \(y\in\Omega_3\), then $|x-y|\ge \frac{|y|}{2}.$
Consequently,
\[
\begin{aligned}
I_3
&\le
C_N
\int_{|y|>2\mathcal R}
|y|^{-N-\beta}\,dy\le
\frac{C_N}{\beta}\mathcal R^{-\beta}.
\end{aligned}
\]
Combining the estimates for \(I_1,I_2,I_3\) proves
\eqref{eq:critical-weighted-convolution}.

By \eqref{eq:zero} and \eqref{eq:n2}, there exists
\(C_N>0\) such that
\begin{equation}\label{eq:P0-critical-beta-bound}
\mathcal P_0(t)
\le
C_N\frac{N-\beta}{\beta}.
\end{equation}
Using \eqref{eq:critical-weighted-convolution} and
\eqref{eq:P0-critical-beta-bound}, we obtain
\[
\begin{aligned}
&S_{\ln}(t)u_0(x)\le
C_N\frac{N-\beta}{\beta}
\left(
\frac{1}{N-\beta}
+
\frac{1}{\beta}
+
1+\log\mathcal R
\right)
\mathcal R^{-\beta}
\le
C_N
\frac{1+\beta\log\mathcal R}{\beta^2}
\mathcal R^{-\beta}.
\end{aligned}
\]
Since \(\beta=N-2t\) and \(\mathcal R=1+|x|\), this proves
\eqref{eq:linear-profile-critical-tail}.

Finally, the linear problem with initial datum \(\mu u_0\) admits the
solution $u(t,x)=\mu S_{\ln}(t)u_0(x)$
on \((0,N/2)\). Consequently, $T_{\mu,0}=\frac N2$ for every
$\mu>0.$
\end{proof}

The linear profile in Lemma~\ref{lem:linear-profile-critical-tail}
suggests the following critical profile:
\[
\Phi_{\mathrm{crit}}(t,x)
:=
\frac{1+(N-2t)\log(1+|x|)}
{(N-2t)^2}
(1+|x|)^{-(N-2t)}.
\]
The next estimate is the critical-tail analogue of
Lemma~\ref{lem:quadratic-profile-estimate}.

\begin{lemma}
\label{lem:critical-three-half-profile-estimate}
Define
\[
W_{A,\mathrm{crit}}(t,x)
:=
A\Phi_{\mathrm{crit}}(t,x),
\qquad A>0.
\]
Then there exists a constant \(C_*>0\), depending only on \(N\), such
that
\begin{equation}\label{eq:critical-three-half-profile-estimate}
\int_0^t
S_{\ln}(t-s)
\bigl(W_{A,\mathrm{crit}}(s,\cdot)^{3/2}\bigr)(x)\,ds
\le
C_*A^{1/2}W_{A,\mathrm{crit}}(t,x)
\end{equation}
for every \(0<t<N/2\) and \(x\in\mathbb R^N\).
\end{lemma}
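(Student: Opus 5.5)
The plan is to follow the proof of Lemma~\ref{lem:quadratic-profile-estimate} closely. First, the logarithmic factor in $\Phi_{\mathrm{crit}}$ is absorbed into a slightly weaker power decay. Then Lemma~\ref{lem:weighted-convolution} is applied with an exponent $p\in(1,3/2)$, and the resulting time integral is estimated by hand, in the spirit of Lemma~\ref{lem:time-integral-claim}. The output only needs the lower part of $W_{A,\mathrm{crit}}$, namely $W_{A,\mathrm{crit}}(t,x)\ge A a^{-2}(1+|x|)^{-a}$ with $a:=N-2t$. So it suffices to show
\[
\int_0^t S_{\ln}(t-s)\bigl(W_{A,\mathrm{crit}}(s,\cdot)^{3/2}\bigr)(x)\,ds\le C A^{3/2}a^{-2}(1+|x|)^{-a}.
\]

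\textbf{Step 1 (absorbing the logarithm).} Fix $s\in(0,t)$, set $b:=N-2s\in(a,N)$, and write $x_b:=b\log(1+|y|)\ge0$. Then
\[
W_{A,\mathrm{crit}}(s,y)^{3/2}=A^{3/2}b^{-3}(1+x_b)^{3/2}e^{-\frac32 x_b}.
\]
Since $(1+x)^{3/2}e^{-x/4}\le C$ for $x\ge0$, we obtain
\[
W_{A,\mathrm{crit}}(s,y)^{3/2}\le CA^{3/2}b^{-3}(1+|y|)^{-\frac54 b}.
\]
The choice $p=5/4$ is fixed here; any $p\in(1,3/2)$ would do.

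\textbf{Step 2 (spatial convolution).} We write the Duhamel integral exactly as in Lemma~\ref{lem:quadratic-profile-estimate}, changing variables from $s$ to $b$, so that $S_{\ln}(t-s)$ contributes $\mathcal P_0\bigl(\frac{b-a}{2}\bigr)|x-y|^{-(N-b+a)}$. Lemma~\ref{lem:weighted-convolution} with $p=5/4$ then bounds the left-hand side by
\[
CA^{3/2}(1+|x|)^{-a}\int_a^N b^{-3}\mathcal P_0\!\left(\tfrac{b-a}{2}\right)\left(\frac1{b-a}+\frac1{a+b/4}\right)db .
\]

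\textbf{Step 3 (time integral; main obstacle).} The task is to show this last integral is at most $Ca^{-2}$. Lemma~\ref{lem:time-integral-claim} only covers $b^{-p}$ with $p\le2$ and gives $a^{-1}$, so a direct computation is needed. Put $r=b-a$ and use $\mathcal P_0(r/2)\le C r/(N-r)$.

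For the first term, the integrand is bounded by $C(a+r)^{-3}/(N-r)$.
\begin{itemize}
\item On $r<N/2$ it contributes at most $C\int_0^\infty (a+r)^{-3}dr\le Ca^{-2}$.
\item On $N/2\le r<N-a$ one has $a+r\ge N/2$, so the contribution is at most $C\log\frac{N}{2a}\le Ca^{-2}$ when $a<N/2$; this piece is absent when $a\ge N/2$.
\end{itemize}
For the second term, $r/(a+b/4)\le 4$, so the integrand is bounded by $C b^{-3}/(N-r)$, which is dominated by the same two pieces.

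The delicate point is the endpoint $b\to N$, i.e.\ $s\to0$, where $\mathcal P_0$ blows up. There the factor $1/(N-r)$ only produces a logarithm, which is harmless against $a^{-2}$. Combining Steps 1--3 gives the displayed bound, and hence \eqref{eq:critical-three-half-profile-estimate}, with $C_*$ depending only on $N$.
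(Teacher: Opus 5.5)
Your proposal is correct and follows the paper's proof essentially step for step. You absorb the logarithm via $(1+r)^{3/2}e^{-r/4}\le C$, apply Lemma~\ref{lem:weighted-convolution} with $p=5/4$, and bound the resulting time integral by $Ca^{-2}$ by hand using $\mathcal P_0(r/2)\le Cr/(N-r)$ and $(b-a)/(a+b/4)\le 4$; the only difference is that you split that integral at $r=N/2$ rather than at $b=N/2$, which is immaterial.
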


\begin{proof}
Fix \(0<t<N/2\) and \(x\in\mathbb R^N\), and set
\(a:=N-2t\in(0,N)\). For \(0<s<t\), let \(b:=N-2s\), so that
\(a<b<N\). Writing \(\mathcal R_y:=1+|y|\), we have
\[
W_{A,\mathrm{crit}}(s,y)^{3/2}
=
A^{3/2}b^{-3}
\bigl(1+b\log\mathcal R_y\bigr)^{3/2}
\mathcal R_y^{-3b/2}.
\]
The elementary estimate $(1+r)^{3/2}e^{-r/4}\le C,\,r\ge 0$
gives, with \(r=b\log\mathcal R_y\),
\[
\begin{aligned}
\bigl(1+b\log\mathcal R_y\bigr)^{3/2}
\mathcal R_y^{-3b/2}
&=
\left[
\bigl(1+b\log\mathcal R_y\bigr)^{3/2}
\mathcal R_y^{-b/4}
\right]
\mathcal R_y^{-5b/4}\le
C\mathcal R_y^{-5b/4}.
\end{aligned}
\]
Consequently,
\begin{equation}\label{eq:critical-profile-power-bound}
W_{A,\mathrm{crit}}(s,y)^{3/2}
\le
CA^{3/2}b^{-3}(1+|y|)^{-5b/4}.
\end{equation}
Therefore,
\[
\begin{aligned}
&\int_0^t
S_{\ln}(t-s)
\bigl(W_{A,\mathrm{crit}}(s,\cdot)^{3/2}\bigr)(x)\,ds\\
&\quad\le
CA^{3/2}
\int_a^N
b^{-3}
\mathcal P_0\!\left(\frac{b-a}{2}\right)
\int_{\mathbb R^N}
|x-y|^{-(N-b+a)}
(1+|y|)^{-5b/4}\,dy\,db.
\end{aligned}
\]

By Lemma~\ref{lem:weighted-convolution}, applied with \(p=5/4\), we have
\[
\begin{aligned}
&\int_{\mathbb R^N}
|x-y|^{-(N-b+a)}
(1+|y|)^{-5b/4}\,dy\le
C
\left(
\frac{1}{b-a}
+
\frac{1}{a+b/4}
\right)
(1+|x|)^{-a}.
\end{aligned}
\]
Hence
\begin{equation}\label{eq:critical-three-half-time-reduction}
\begin{aligned}
&\int_0^t
S_{\ln}(t-s)
\bigl(W_{A,\mathrm{crit}}(s,\cdot)^{3/2}\bigr)(x)\,ds\\
&\quad\le
CA^{3/2}(1+|x|)^{-a}
\int_a^N
b^{-3}
\mathcal P_0\!\left(\frac{b-a}{2}\right)
\left(
\frac{1}{b-a}
+
\frac{1}{a+b/4}
\right)db.
\end{aligned}
\end{equation}

We claim that
\begin{equation}\label{eq:critical-three-half-time-integral}
\int_a^N
b^{-3}
\mathcal P_0\!\left(\frac{b-a}{2}\right)
\left(
\frac{1}{b-a}
+
\frac{1}{a+b/4}
\right)db
\le
\frac{C}{a^2}.
\end{equation}
Indeed, $\mathcal P_0\!\left(\frac{b-a}{2}\right)
\le
C_N\frac{b-a}{N-b+a},$
and $1+\frac{b-a}{a+b/4}\le 5.$
Thus
\[
b^{-3}
\mathcal P_0\!\left(\frac{b-a}{2}\right)
\left(
\frac{1}{b-a}
+
\frac{1}{a+b/4}
\right)
\le
\frac{C}{b^3(N-b+a)}.
\]
If \(0<a\le N/2\), then
\[
\begin{aligned}
\int_a^N\frac{db}{b^3(N-b+a)}
&\le
C\int_a^{N/2}\frac{db}{b^3}
+
C\int_{N/2}^{N}\frac{db}{N-b+a}
\le
\frac{C}{a^2}.
\end{aligned}
\]
If \(N/2<a<N\), then \(N-b+a\ge a\), and hence
\[
\int_a^N\frac{db}{b^3(N-b+a)}
\le
\frac1a\int_a^N b^{-3}\,db
\le
\frac{C}{a^2}.
\]

Combining \eqref{eq:critical-three-half-time-reduction} and
\eqref{eq:critical-three-half-time-integral}, we obtain
\[
\begin{aligned}
&\int_0^t
S_{\ln}(t-s)
\bigl(W_{A,\mathrm{crit}}(s,\cdot)^{3/2}\bigr)(x)\,ds\le
CA^{3/2}a^{-2}(1+|x|)^{-a}.
\end{aligned}
\]
Since $a^{-2}(1+|x|)^{-a}
\le
\Phi_{\mathrm{crit}}(t,x),$
we conclude that
\[
\begin{aligned}
&\int_0^t
S_{\ln}(t-s)
\bigl(W_{A,\mathrm{crit}}(s,\cdot)^{3/2}\bigr)(x)\,ds\le
CA^{3/2}\Phi_{\mathrm{crit}}(t,x)
=
CA^{1/2}W_{A,\mathrm{crit}}(t,x).
\end{aligned}
\]
This proves \eqref{eq:critical-three-half-profile-estimate}.
\end{proof}

We now combine the critical lower estimate in
Lemma~\ref{prop:lowbound}, the critical linear profile estimate in Lemma \ref{lem:linear-profile-critical-tail}, the
\(3/2\)-power convolution bound in Lemma \ref{lem:critical-three-half-profile-estimate}, the large-data blow-up criterion, and
the monotonicity of the lifespan with respect to \(\mu\) to prove the
critical-tail dichotomy.

\begin{proof}[\textbf{Proof of Theorem~\ref{thm:critical-tail-dichotomy}.}]
\textbf{(a).} Suppose, by
contradiction, that there exists a nonnegative mild solution \(u\) on
\((0,T)\) where $T:=\frac{N}{2}.$ Let \(K=\overline{B_R(x_0)}\subset\mathbb R^N\) be a fixed closed ball
and set
\[
m(t):=m_K(t):=\inf_{x\in K}u(t,x).
\]
Fix \(\delta\in(0,T/2)\). By the critical-tail estimate in
Lemma~\ref{prop:lowbound}, there exist constants
\(c_{K,\delta}^{\mathrm{crit}}>0\) and \(C_{K,\delta}>0\) such that
\begin{equation}\label{eq:critical-closed-ineq-short}
m(t)
\ge
\frac{\mu c_{K,\delta}^{\mathrm{crit}}}{(N-2t)^2}
+
C_{K,\delta}
\int_{t-\delta}^{t}f(m(s))\,ds,
\qquad
t\in[\delta,T).
\end{equation}
For simplicity, write $a:=\mu c_{K,\delta}^{\mathrm{crit}}$ and $C:=C_{K,\delta}.$ Set $\Phi(\rho):=
\int_\rho^\infty\frac{d\sigma}{f(\sigma)}.$
By \eqref{eq:critical-tail-osgood-condition}, there exists a sequence
\(\{\rho_n\}_{n\ge1}\) such that
\begin{equation}\label{eq:critical-osgood-sequence}
\rho_n\longrightarrow\infty
\qquad\text{and}\qquad
\sqrt{\rho_n}\,\Phi(\rho_n)\longrightarrow0.
\end{equation}
Define $\varepsilon_n:=\frac{\sqrt a}{2\sqrt{\rho_n}}$ and $t_n:=T-\varepsilon_n.$
Then \(\varepsilon_n\to0\). Hence, for all sufficiently large \(n\), $0<\varepsilon_n<\delta$ and $t_n>\delta.$ Fix such an \(n\), to be chosen more precisely below. Since
\(t-\delta<t_n\) for every \(t\in[t_n,T)\),
\eqref{eq:critical-closed-ineq-short} yields
\begin{equation}\label{eq:critical-ineq-from-t0-short}
m(t)
\ge
\frac{a}{(N-2t)^2}
+
C\int_{t_n}^{t}f(m(s))\,ds,
\qquad
t\in[t_n,T).
\end{equation}

Define
\[
y(t)
:=
\frac{a}{(N-2t)^2}
+
C\int_{t_n}^{t}f(m(s))\,ds,
\qquad
t\in[t_n,T).
\]
Then \(m(t)\ge y(t)\), and \(y\) is locally absolutely continuous.
Since \(f\) is nondecreasing, for a.e. \(t\in[t_n,T)\),
\[
y'(t)
=
\frac{4a}{(N-2t)^3}
+
Cf(m(t))
\ge
Cf(y(t)).
\]

Since \(N=2T\), the definition of \(\varepsilon_n\) gives
\[
y(t_n)
=
\frac{a}{(N-2t_n)^2}
=
\frac{a}{4\varepsilon_n^2}
=
\rho_n.
\]
Moreover,
\[
\frac{\Phi(y(t_n))}{C\varepsilon_n}
=
\frac{\Phi(\rho_n)}{C\varepsilon_n}
=
\frac{2}{C\sqrt a}\sqrt{\rho_n}\,\Phi(\rho_n).
\]
By \eqref{eq:critical-osgood-sequence}, the right-hand side tends to
zero as \(n\to\infty\). We may therefore choose \(n\) sufficiently
large so that
\begin{equation}\label{eq:critical-small-osgood-time-short}
\frac1C\Phi(y(t_n))<\varepsilon_n.
\end{equation}

Since \(y\) is nondecreasing, \(y(t)\ge y(t_n)=\rho_n\) for
\(t\in[t_n,T)\). Consequently,
\[
\frac{d}{dt}\Phi(y(t))
=
-\frac{y'(t)}{f(y(t))}
\le -C
\]
for a.e. \(t\in[t_n,T)\). It follows that $\Phi(y(t))
\le
\Phi(y(t_n))-C(t-t_n).$
Thus \(y\) cannot remain finite up to the time $t_*:=
t_n+\frac1C\Phi(y(t_n)).$
By \eqref{eq:critical-small-osgood-time-short}, $t_*
<
t_n+\varepsilon_n
=
T.$
This contradicts the finiteness of the mild solution on \((0,T)\),
since \(m(t)\), and hence \(y(t)\), is finite for every \(t<T\).
Therefore, $T_{\mu,f}<\frac N2=T_{\mu,0}.$
Since \(\mu>0\) was arbitrary, the proof is complete.

\smallskip

\textbf{(b).} Set $\Phi_f(\rho)
:=
\int_\rho^\infty\frac{d\sigma}{f(\sigma)}.$
Since the weighted
Osgood tail condition at infinity
\eqref{eq:critical-tail-osgood-condition} fails, there exist constants
\(c_0>0\) and \(R_0>0\) such that $\sqrt{\rho}\,\Phi_f(\rho)\ge c_0,\rho\ge R_0.$
On the other hand, by
\eqref{eq:osgood-tail-comparability}, after increasing \(R_0\) if
necessary, there exists \(C_0>0\) such that
\[
\Phi_f(\rho)
\le
C_0\frac{\rho}{f(\rho)},
\qquad \rho\ge R_0.
\]
Combining the preceding two inequalities, we obtain $f(\rho)
\le
\frac{C_0}{c_0}\rho^{3/2}$ for every $ \rho\ge R_0.$

Moreover, the condition $\mathcal{F}$
implies that there exist \(r_0>0\) and \(C_1>0\) such that $f(s)\le C_1s$ for $0\le s\le r_0.$
Using also the continuity of \(f\) on the compact interval
\([r_0,R_0]\), we conclude that there exists a constant
\(\rho_0>0\) such that
\begin{equation}\label{eq:global-linear-three-half-majorant}
f(s)\le \rho_0\bigl(s+s^{3/2}\bigr),
\qquad s\ge0.
\end{equation}

We first prove that sufficiently small initial data have the full
linear lifespan. Let
\[
\Phi_{\mathrm{crit}}(t,x)
:=
\frac{1+(N-2t)\log(1+|x|)}
{(N-2t)^2}
(1+|x|)^{-(N-2t)},
\qquad
0<t<\frac N2.
\]
By Lemma~\ref{lem:linear-profile-critical-tail}, there exists \(C_0>0\)
such that
\begin{equation}\label{eq:critical-linear-profile-used-threshold}
S_{\ln}(t)u_0(x)
\le
C_0\Phi_{\mathrm{crit}}(t,x)
\end{equation}
for every \(0<t<N/2\) and \(x\in\mathbb R^N\).

Set $E_0:=e^{\rho_0N/2}$
and define $W_{A,\mathrm{crit}}(t,x)
:=
A\Phi_{\mathrm{crit}}(t,x).$
Choose \(A>0\) sufficiently small that $\rho_0E_0C_*A^{1/2}\le\frac12,$
where \(C_*\) is the constant in
Lemma~\ref{lem:critical-three-half-profile-estimate}, and then choose
\(\mu_0>0\) such that $\mu_0E_0C_0\le\frac A2.$

For \(0<\mu\le\mu_0\), consider
\[
\begin{aligned}
\mathcal Q_\mu[v](t,x)
:={}&
\mu e^{\rho_0t}S_{\ln}(t)u_0(x)+
\rho_0\int_0^t
e^{\rho_0(t-s)}
S_{\ln}(t-s)
\bigl(v(s,\cdot)^{3/2}\bigr)(x)\,ds.
\end{aligned}
\]
Thus, by Lemma~\ref{lem:critical-three-half-profile-estimate}, we obtain
\[
\begin{aligned}
\mathcal Q_\mu[W_{A,\mathrm{crit}}](t,x)
&\le
\frac12W_{A,\mathrm{crit}}(t,x)
+
\rho_0E_0C_*A^{1/2}
W_{A,\mathrm{crit}}(t,x)\le
W_{A,\mathrm{crit}}(t,x).
\end{aligned}
\]
The same monotone iteration argument as in the proof of
Theorem~\ref{thm:noncritical-tail-dichotomy} therefore yields a
function \(v\le W_{A,\mathrm{crit}}\) satisfying
\begin{equation}\label{eq:critical-dominant-equation-mild}
\begin{aligned}
v(t,x)
={}&
\mu S_{\ln}(t)u_0(x)+
\rho_0\int_0^t
S_{\ln}(t-s)
\bigl(v(s,\cdot)+v(s,\cdot)^{3/2}\bigr)(x)\,ds.
\end{aligned}
\end{equation}

By \eqref{eq:global-linear-three-half-majorant}, \(v\) is an integral
supersolution of \eqref{eq:main-cauchy-problem}. Arguing as in the proof
of Theorem~\ref{thm:noncritical-tail-dichotomy}, we obtain a unique
nonnegative mild solution on \((0,N/2)\). Consequently, $T_{\mu,f}=\frac N2$ for every $0<\mu\le\mu_0.$  By \eqref{eq:osgood-condition-large}, we have $\lim_{\mu\to+\infty}T_{\mu,f}=0.$
In particular, there exists \(\mu_\infty>0\) such that $0<T_{\mu,f}<\frac N2$ for every $\mu>\mu_\infty.$

Define
\begin{equation}\label{eq:def-critical-threshold-amplitude}
\mathcal G_{\mathrm{crit}}
:=
\left\{
\mu>0:
T_{\mu,f}=\frac N2
\right\},
\qquad
\mu_{\mathrm{crit}}^*
:=
\sup\mathcal G_{\mathrm{crit}}.
\end{equation}
The small and large data estimates above imply that $0<\mu_{\mathrm{crit}}^*<\infty.$

Finally, the monotonicity of \(\mu\mapsto T_{\mu,f}\), established in
Lemma~\ref{lem:lifespan-monotonicity-mu}, gives exactly as in the
proof of Theorem~\ref{thm:noncritical-tail-dichotomy} that the desired result,
we complete the proof.
\end{proof}


\noindent\textbf{Conflict of interest.} The authors declare that they have no conflict of interest.

\medskip

\noindent\textbf{Data availability.} No datasets were generated or analyzed during the current study.

\medskip

\noindent\textbf{AI assistance statement.}
The authors used AI tools solely to assist with language polishing and
the presentation of the manuscript. All mathematical arguments,
proofs, and verifications were carried out by the authors, who take full
responsibility for the content of the paper.

\medskip

\noindent\textbf{Acknowledgements.} 

 H. Chen is supported by  the National Natural Science Foundation of China, No.  12361043.\smallskip
 
R. Chen is supported by China Scholarship Council, Liujinxuan [2025] no. 37.

\smallskip

D. Hauer is supported by the Australian Research Council grant DP220100067.\smallskip

J. Wang is supported by  National Natural Science Foundation of China 12371114

\bigskip

\small
	\noindent\textit{Huyuan Chen}:  Center for Mathematics and Interdisciplinary Sciences,\\[1mm]
		Fudan University, Shanghai 200433, PR China.\\[2mm]
Shanghai Institute for Mathematics and Interdisciplinary Sciences,\\[1mm]
		 Shanghai 200433, PR China \\[1mm]  
         \noindent\emph{Email:} \texttt{chenhuyuan@simis.cn, chenhuyuan@yeah.net} \\[3mm]

	\noindent\textit{Rui Chen}: School of Mathematical Sciences, Fudan University,\\[1mm]
		Shanghai 200433,  China\\[2mm]
		Brandenburg University of Technology Cottbus--Senftenberg,\\[1mm]
		Cottbus 03046, Germany\\[1mm]
		\noindent\emph{Email:} \texttt{chenrui23@m.fudan.edu.cn}\\[3mm]

	\noindent\textit{Daniel Hauer }:
         Brandenburg University of Technology Cottbus–Senftenberg,\\[1mm]
		Platz der Deutschen Einheit 1, 03046 Cottbus, Germany\\[2mm]
		 School of Mathematics and Statistics, The University of Sydney,\\[1mm]
		NSW 2006, Australia \\[1mm]
      	\noindent\emph{Email:} \texttt{daniel.hauer@b-tu.de}\\[3mm]

\noindent\textit{Jun Wang}: School of Mathematical Sciences, Jiangsu University‌,\\[1mm]
No.301 Xuefu Road, Zhenjiang, \\[1mm] Jiangsu 212013, P.R. China‌‌\\[1mm]
\noindent\emph{Email:} \texttt{wangj2011@ujs.edu.cn}

\vspace{1em}

 \end{document}